\theoremstyle{plain}
\newtheorem{theorem}{Theorem}[section]
\newtheorem{lemma}[theorem]{Lemma}
\newtheorem{proposition}[theorem]{Proposition}
\newtheorem{corollary}[theorem]{Corollary}
\newtheorem{remark}[theorem]{Remark}
\setlist{leftmargin=15pt,labelindent=15pt}
\setlist[enumerate]{wide=0pt, leftmargin=15pt, labelwidth=15pt, align=left}
\title{ISOTROPY GROUP OF LOTKA-VOLTERRA DERIVATIONS }
\author[ Himanshu Rewri]{Himanshu Rewri}
\author[ Surjeet Kour]{Surjeet Kour}
\address[] {\newline
Department of Mathematics,
Indian Institute of Technology Delhi,
Hauz Khas, New Delhi, 110016, India.}
\email[] {himanshurewri50@gmail.com, koursurjeet@gmail.com}
\keywords{Automorphism group; Isotropy group; Lotka-Volterra derivation.}  
\subjclass[2010]{13N15, 13P05}
\begin{document}

\begin{abstract}
In this paper, we study the isotropy group of Lotka-Volterra derivations of \newline $K[x_{1},\cdots,x_{n}]$, i.e., a derivation $d$ of the form $d(x_{i})=x_{i}(x_{i-1}-C_{i}x_{i+1})$. 
If $n=3$ or $n \geq 5$, we have shown that the isotropy group of $d$ is finite. However, for $n=4$, it is observed that the isotropy group of $d$ need not be finite. Indeed, for $C_{i}=-1$, we observed an infinite collection of automorphisms in the isotropy group of $d$.  Moreover, for $n \geq 3, ~~\text{and}~~C_{i}=1$, we have shown that the isotropy group of $d$ is isomorphic to the dihedral group of order $2n$. 
\end{abstract}
\maketitle

\section{Introduction}

Let $K$ denote a field of characteristic zero containing $\mathbb{Q}$, $K^* = K \setminus \{0\}$ and $A$ be a commutative $K$-algebra.  A $K$-linear map $d: A \rightarrow A$ is said to be a $K$-derivation if it satisfies the Leibnitz rule; $d(ab)=ad(b) + bd(a)$ for all $a, b \in A$. The set of all $K$-derivations is denoted by $\text{Der}_K(A)$. Let $\text{Aut}_K(A)$ be the set of all $K$-automorphisms of $A$.  Then $\text{Aut}_K(A)$ acts on $\text{Der}_K(A)$ by conjugation  as 
$$\rho \cdot d = \rho d \rho^{-1},  ~~~~~~ \text{for} ~~ \rho  \in  \text{Aut}_K(A) ~~~\text{and} ~~~ d \in \text{Der}_K(A).$$ 
For a fixed derivation $d \in \text{Der}_K(A)$, the stabilizer of the above action is a subgroup of $\text{Aut}_K(A)$ given by, $\{ \rho  \in \text{Aut}_K(A)~|~\rho d = d \rho \}$, and called the isotropy group of  $d$. In other words, the isotropy group of $d$ consists of all automorphisms of $A$ that commute with $d$ and is denoted by $\text{Aut}(A)_d$. It is easy to see that the isotropy groups are well-behaved with the conjugation; more precisely, if $d_1$ and $d_2$ are two derivations conjugate to each other by an automorphism $\sigma$, then $\text{Aut(A)}_{d_1} =\sigma (\text{Aut(A)}_{d_2})  \sigma^{-1}$. 
Further, if $A$ is a polynomial algebra over $K$, a $K$-derivation $d$ on $A$ corresponds to a polynomial vector field over the affine space $\mathbb{A}_K^n$ and in this case, elements of $\text{Aut}(A)_d$ correspond to the polynomial symmetries of that vector field.

In recent years, much work has been done on the isotropy group of simple derivations of the polynomial algebra (see, for instance, \cite{Balt}, \cite{BaltazarPan2021}, \cite{Bert}, \cite{Pan}, \cite{Yan}). A derivation $d$ is said to be simple if it has no non-trivial $d$ invariant ideal. For examples of simple derivations, one can see \cite{Baltazar2015}, \cite{Leq}, \cite{Kour2014}, \cite{Yan2019simple}. In \cite{Balt}, R. Baltazar conjectured that if $d$ is a simple derivation of  $K[x_{1},x_{2}]$, then the isotropy group of $d$ is finite.
This conjecture has been established affirmatively by L.G. Mendes and I. Pan, see \cite{Pan}. They have shown that the isotropy group of simple derivation is indeed trivial. The same assertion does not apply to derivations that are not simple. Indeed, the isotropy group of a non-simple derivation can be infinite. However, the isotropy group is trivial even in higher dimensions for a particular class of derivations.
In \cite{Bert}, L. N. Bertoncello and D. Levcovitz showed that if $d$ is a simple Shamsuddin derivation of $K[x_{1},\ldots,x_{n}]$, then the isotropy group of $d$ is trivial. Subsequently, D. Yan in 2024 proved that a Shamsuddin derivation of $K[x_{1},\ldots,x_{n}]$ is simple if and only if its isotropy group is trivial, see \cite{Yan}. Recently, other papers (see, for instance, \cite{Baltazar2021}, \cite{Dasgupta}) have aimed to study the isotropy groups of non-simple derivations over some special kinds of domains. In \cite{Baltazar2021}, R. Baltazar and M. Veloso describe the isotropy group of locally nilpotent derivations on Danielewski surfaces formed by the polynomial of the form $f(x_1)x_2-g(x_3)$ on $K[x_1, x_2, x_3]$. These results are then extended to generalized Danielewski surfaces and Danielewski varieties (which are further generalizations of Danielewski surfaces) by N. Dasgupta and A. Lahiri, see \cite{Dasgupta}. In \cite{Rewri}, the authors studied non-simple derivations of the form $x_{1}^{m}x_{2}^{u}\delta$, where $m,u\geq 0$ and $\delta$ is a simple derivation of $K[x_{1},x_{2}]$. It is shown that the isotropy group of such derivations is isomorphic to the product of two cyclic groups. 
 
This paper aims to study the isotropy group of Lotka-Volterra derivations of $K[x_{1},\ldots,x_{n}]$. Throughout the article, let $n \geq 3$ be an integer and $\mathbb{Z}_n$ be the additive group of integers modulo $n$. We list the elements of  $\mathbb{Z}_n$ as $\{1, 2, \ldots, n\}$. Further, the indexes of the variables $x_{1}, \ldots, x_{n}$ in the polynomial algebra  $K[X]=K[x_{1},\ldots,x_{n}]$ are regarded as elements of $\mathbb{Z}_n$. A $K$-derivation $d: K[X] \rightarrow K[X]$ of the form 
\begin{equation}\label{LVD}
  d(x_{i})=x_{i}(x_{i-1}-C_{i}x_{i+1})  
\end{equation}
where $C_{i} \in K$ for all $i \in \mathbb{Z}_n$, is called a Lotka-Volterra derivation. Lotka-Volterra derivations are examples of non-simple derivations, and they have many applications in various branches of science, such as plasma theory, biology, and physics. These occur as a system of differential equations for the predator and prey population as functions of time (see, for instance, \cite{Bogo}, \cite{Chau}). For more on the Lotka-Volterrra derivations, we refer the reader to \cite{Moulin1},\cite{Ziel}.\par
In section $2$, we establish some general results on the $K$-automorphisms of $K[X]$ that commute with the Lotka-Volterra derivations of $K[X]$, along with the study of the isotropy group of Lotka-Volterra derivations of $K[x_1, x_2, x_3]$. We prove that the isotropy group of Lotka-Volterra derivations of $K[x_1, x_2, x_3]$ are finite. For $d$, as defined in equation \eqref{LVD}, depending on the various values of $C_i$'s, we have listed elements of the isotropy group in Table \ref{Table1} and Table \ref{Table2}. If $1-C_{1}C_{2}C_{3} \neq 0$, then the isotropy group of $d$ is given in Table \ref{Table1}. Further, if $1-C_{1}C_{2}C_{3} = 0$, the isotropy group of $d$ is as mentioned in Table \ref{Table2}. In addition, if $C_1=C_2=C_3=1$, the isotropy group of $d$ is isomorphic to the dihedral group $D_{6}$ of order $6$.
 
In section $3$, we study the isotropy group of Lotka-Volterra derivations of $K[x_1, x_2, x_3, x_4]$. In this case, we first observed that the isotropy group of $d$  need not be finite. If $C_{i}=-1$ for $1 \leq i \leq 4$, we obtained an infinite family of automorphisms of $K[x_1, x_2, x_3, x_4]$ in the isotropy group.  Indeed, some non-linear automorphisms were observed in the isotropy group of $d$ (see Theorem \ref{infinite}). Further, we describe the isotropy group of $d$ for all values of $C_{i}$'s. It is shown that if $C_{i}$'s are not equal to $-1$, the isotropy group of $d$ is finite.

In section $4$, we studied the isotropy group of Lotka-Volterra derivations of the polynomial algebra $K[x_1, \ldots, x_n]$, where $n\geq 5$. We first observed that the isotropy group of Lotka-Volterra derivation is always finite. Further, we obtained some interesting results for particular values of $ C_i$'s. For example, we observed that if  $C_{i}=1$ for all $i$, then the isotropy group of Lotka-Volterra derivation is isomorphic to the dihedral group $D_{2n}$ of order $2n$, and if $C_{i}=0$ for all $i$, then the isotropy group is isomorphic to $\mathbb{Z}_n$. In addition, if $n$ is even and $C_i$'s are of alternate sign, the isotropy group is isomorphic to
$D_{2m}$, where $n=2m$.
 
\section{\textbf{ Isotropy group of Lotka-Volterra derivation of \texorpdfstring{$K[x_1, x_2, x_3]$}{} }}

 The main aim of this section is to study the isotropy group of Lotka-Volterra derivations of $K[x_{1},x_{2},x_{3}]$. Before we do so, we establish some general results on the $K$-automorphisms of $K[X]$ that commute with the Lotka-Volterra derivation of $K[X]$  defined in equation \eqref{LVD}. Let $\mathbb{N}_{0}=\mathbb{N}\cup \{0\} $ and for $\alpha = \left ( \alpha_{1}, \ldots,\alpha_{n} \right ) \in \mathbb{N}_{0} ^{n}$, let $X^{\alpha}$ denote the monomial $x_1^{\alpha_{1}}\cdots x_n^{\alpha_{n}}$. Further, as mentioned in the introduction, the indexes of the variables $x_1, \ldots, x_n$ in the polynomial algebra $K[X]$ are elements of $\mathbb{Z}_n= \{1, 2, \ldots, n\}$. More precisely, if $i$ and $j$ are integers such that $i \equiv j(\text{mod}~ n)$, then $x_i=x_j$. For $p \in \mathbb{N}_0$, let $A^{(p)}$ denote the set of all homogeneous polynomials of degree $p$ in $K[X]$. Then $A^{(p)}$ is a $K$-submodule of $K[X]$ and 
 
 $$K[X]=\bigoplus_{p \in \mathbb{N}_{0}} A^{(p)}.$$ 
 
Further, a polynomial $f \in K[X]$ has the decomposition $f=f_{0}+\cdots+f_{p}$, where $f_{i} \in A^{(i)} ~~\forall i \geq 0$. A $K$-derivation $\delta$ of $K[X]$ is called a homogeneous of degree $s$ if $ \delta \left ( x_{i} \right ) \in A^{(s+1)}$ for all $i \in \mathbb{Z}_n$. Observe that the Lotka-Volterra derivation defined in equation \eqref{LVD} is homogeneous of degree $1$. 
\begin{lemma}\label{totaldegree}
Let $d$ be the Lotka-Volterra derivation as defined in equation \eqref{LVD} and $\rho \in \text{Aut}(K[X])_{d}$. Then, for all $i \in\mathbb{Z}_{n}$, $\rho(x_{i-1}-C_{i}x_{i+1})$ is a polynomial in $K[X]$ of total degree $1$. 
\end{lemma}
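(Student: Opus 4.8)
The plan is to convert the commutation relation $\rho d = d\rho$ into a single differential identity for the images $y_j := \rho(x_j)$ and then read off the degree of $\rho(x_{i-1}-C_i x_{i+1})$ from it. Write $L_i := x_{i-1}-C_i x_{i+1}$, so that $d(x_i)=x_i L_i$. Evaluating both sides of $\rho d = d\rho$ on the generator $x_i$ and using that $\rho$ is a $K$-algebra homomorphism fixing the scalars $C_i$, the left-hand side becomes $\rho(x_i L_i)=y_i\,\rho(L_i)$ with $\rho(L_i)=y_{i-1}-C_i y_{i+1}$, while the right-hand side is $d(y_i)$. Hence the identity
\[
 d(y_i) = y_i\,(y_{i-1}-C_i y_{i+1})
\]
captures all the needed information, and the goal reduces to showing $\deg \rho(L_i)=1$.

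For the upper bound I would use that $d$ is homogeneous of degree $1$: applying the Leibniz rule to a monomial shows $d(A^{(k)})\subseteq A^{(k+1)}$, so decomposing $y_i$ into homogeneous parts gives $\deg d(y_i)\le \deg y_i + 1$. Since $K[X]$ is an integral domain, provided $\rho(L_i)\neq 0$ the product on the right of the identity has degree $\deg y_i + \deg\rho(L_i)$; comparing with the bound forces $\deg\rho(L_i)\le 1$.

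Two degenerate cases remain, and this is where the actual content lies. First, $\rho(L_i)$ cannot be zero: because $\rho$ is an automorphism, $y_1,\dots,y_n$ generate $K[X]$ and, being a generating set of size equal to the transcendence degree, are algebraically independent over $K$; since $n\ge 3$ forces $i-1\not\equiv i+1 \pmod n$, the two distinct generators $y_{i-1},y_{i+1}$ satisfy no nontrivial linear relation, so $y_{i-1}-C_i y_{i+1}\neq 0$. Second, I must exclude $\rho(L_i)$ being a nonzero constant $c$; otherwise the identity reads $d(y_i)=c\,y_i$, and comparing homogeneous components degree by degree (the degree-$k$ part of the left side is $d((y_i)_{k-1})$, while that of the right side is $c\,(y_i)_k$) forces every homogeneous part of $y_i$ to vanish, so $y_i=0$, a contradiction. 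Combining these with $\deg\rho(L_i)\le 1$ yields $\deg\rho(L_i)=1$.

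The obstacle I anticipate is not the degree bookkeeping but handling the two boundary cases: the algebraic-independence input (which is exactly what makes the hypothesis $n\ge 3$ essential) to rule out the zero case, and the component-by-component comparison to rule out the nonzero-constant case. Once the identity $d(y_i)=y_i\,\rho(L_i)$ is established, everything else is a routine degree count.
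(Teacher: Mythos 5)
Your proposal is correct and follows essentially the same route as the paper: both turn the commutation relation into the identity $d(\rho(x_i)) = \rho(x_i)\,\rho(x_{i-1}-C_i x_{i+1})$ (the paper applies it to $x_i^2$ instead of $x_i$, an inessential variation), bound the left side by $\deg \rho(x_i)+1$ using that $d$ is homogeneous of degree $1$, and conclude $\deg \rho(x_{i-1}-C_i x_{i+1}) \le 1$ with equality because $\rho$ is an automorphism. Your explicit handling of the two degenerate cases (zero and nonzero constant) is sound, though it can be compressed to the single observation that an automorphism sends the non-constant polynomial $x_{i-1}-C_i x_{i+1}$ to a non-constant polynomial, which is all the paper invokes.
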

\begin{proof}
For $i \in \mathbb{Z}_n$, we have
\begin{align}
   d(\rho(x_{i}^{2}))=\rho(d(x_{i}^{2})) 
   \Rightarrow  d ( \rho\left ( x_{i} \right )^{2} ) =\rho(2x_{i}d(x_{i}))
   =2\rho(x_{i})^2\rho(x_{i-1}-C_{i}x_{i+1}). 
\end{align}\label{N1}
Let deg$(\rho(x_{i}))=m_{i}$ and deg$(\rho(x_{i-1}-C_{i}x_{i+1}))=n_{i}$. Then, deg$ (d(\rho\left ( x_{i} \right )^{2}))\leq 2m_{i}+1 $ (as $d$ is homogeneous of degree 1), and by comparing the total degree in the equation \eqref{N1}, we get $2m_{i} + n_{i} \leq 2m_{i} + 1$. Hence, $n_{i} \leq 1$. Since $\rho$ is an automorphism, we have $n_{i}=1$.
\end{proof}

\begin{corollary}\label{Cor1}
Let $d$ be the Lotka-Volterra derivation as defined in equation \eqref{LVD}. If there exists  $k \in \mathbb{Z}_{n}$ such that deg$(\rho(x_{k-1}))=$ deg$(\rho(x_{k}))=1$, then deg$(\rho(x_{i}))=1$, for all $ i \in \mathbb{Z}_{n}$. 
 \end{corollary}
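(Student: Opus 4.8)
The plan is to combine Lemma \ref{totaldegree} with a one-directional ``degree propagation'' step and then sweep around $\mathbb{Z}_n$ by a parity argument. Write $m_i := \deg\rho(x_i)$. First I would record the standing fact that $m_i \geq 1$ for every $i$: since $\rho$ is a $K$-algebra automorphism it fixes $K$ and is injective, so it cannot send the non-constant element $x_i$ to a constant (if $\rho(x_i)=c\in K=\rho(c)$, injectivity forces $x_i=c$, absurd). This is the same reasoning invoked at the end of the proof of Lemma \ref{totaldegree}.

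The key observation is an implication that holds for \emph{every} value of $C_i$, including $C_i=0$. By Lemma \ref{totaldegree}, the polynomial $\rho(x_{i-1})-C_i\rho(x_{i+1})=\rho(x_{i-1}-C_ix_{i+1})$ has total degree $1$ for each $i\in\mathbb{Z}_n$. Suppose $m_{i+1}=1$. Then $C_i\rho(x_{i+1})$ has degree $\leq 1$, and hence $\rho(x_{i-1})=\bigl(\rho(x_{i-1})-C_i\rho(x_{i+1})\bigr)+C_i\rho(x_{i+1})$ is a sum of two polynomials of degree $\leq 1$, giving $m_{i-1}\leq 1$; together with $m_{i-1}\geq 1$ this yields $m_{i-1}=1$. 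Reindexing by replacing $i$ with $j-1$, this reads: $m_j=1\implies m_{j-2}=1$. Crucially, this argument never divides by $C_i$, so it is immune to the degenerate case $C_i=0$ that would obstruct the opposite implication.

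With this in hand the corollary follows by an orbit argument on $\mathbb{Z}_n$. Iterating $m_j=1\implies m_{j-2}=1$ from the hypothesis $m_k=1$ gives $m_k=m_{k-2}=m_{k-4}=\cdots=1$, i.e.\ degree $1$ at every index in the orbit of $k$ under subtraction by $2$. If $n$ is odd, $2$ is invertible modulo $n$, so this single orbit is all of $\mathbb{Z}_n$ and we are done. If $n$ is even, the orbit of $k$ is exactly the set of indices having the same parity as $k$; here the second hypothesis $m_{k-1}=1$ produces a second orbit covering the opposite parity, and the union of the two orbits is all of $\mathbb{Z}_n$. In every case $m_i=1$ for all $i\in\mathbb{Z}_n$.

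The only genuine subtlety, and the reason the hypothesis involves two consecutive indices rather than one, is precisely this even/odd dichotomy: a single seed propagates only within one residue class modulo $2$, so two adjacent seeds are needed to reach both classes when $n$ is even. The one other point requiring care is to run the propagation in the direction that avoids inverting $C_i$, so that no non-vanishing assumption on the $C_i$ is needed anywhere.
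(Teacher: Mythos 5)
Your proof is correct and follows essentially the same route as the paper: both use Lemma \ref{totaldegree} to propagate $\deg\rho(x_j)=1 \implies \deg\rho(x_{j-2})=1$ (writing $\rho(x_{j-2})$ as $\rho(x_{j-2}-C_{j-1}x_j)+C_{j-1}\rho(x_j)$, which never requires inverting $C_{j-1}$), and then cover all of $\mathbb{Z}_n$ by the two parity orbits seeded at $k$ and $k-1$. Your write-up merely makes explicit two points the paper leaves implicit, namely the lower bound $\deg\rho(x_i)\geq 1$ and the reason the downward direction of propagation is the safe one when some $C_i$ vanishes.
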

 \begin{proof}
Using Lemma \ref{totaldegree} recursively and the  given condition, we have
$$ \textup{deg}(\rho(x_{k-1-2t}))=\textup{deg}(\rho(x_{k-1}))=1 ~~~\text{and}~~~ \textup{deg}(\rho(x_{k-2t}))= \textup{deg}(\rho(x_{k}))=1~~ \text{for all}~~ t \geq 0.$$ 
Now, for $ 1 \leq i \leq n$, 
define $t_i=(k-i)/2$ if $(k-i)$ is even else $t_i=(k-i-1)/2$. Then,
deg$(\rho(x_{i}))=$ deg$(\rho(x_{k-2t_i}))=$ deg$(\rho(x_{k}))=1~ \text{if}~ t_i=(k-i)/2,~\text{and} 
    ~$deg$(\rho(x_{i}))=$ deg$(\rho(x_{k-1-2t_i}))=$ deg$(\rho(x_{k-1}))=1~ \text{if}~ t_i=(k-i-1)/2$.
 \end{proof}

\begin{corollary}\label{Cor2}
Let $d$ be the Lotka-Volterra derivation as defined in equation \eqref{LVD}. If $n$ is odd and there exists  $ k \in \mathbb{Z}_{n}$ such that $C_{k}=0$. Then deg$(\rho(x_{i}))=1$, for all $ i \in \mathbb{Z}_{n}$. 
 \end{corollary}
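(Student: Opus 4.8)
The plan is to reduce everything to Corollary \ref{Cor1} by producing two \emph{consecutive} indices at which $\rho$ has total degree one; the oddness of $n$ is exactly what makes this reduction possible.

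First I would exploit the hypothesis $C_k = 0$. Applying Lemma \ref{totaldegree} at $i = k$, the combination $x_{k-1} - C_k x_{k+1}$ collapses to $x_{k-1}$, so the lemma gives $\deg(\rho(x_{k-1})) = 1$. This supplies a single seed index, namely $k-1$, with the degree-one property.

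Next I would extract a one-sided propagation rule from Lemma \ref{totaldegree}. For each $i$, write $\rho(x_{i-1}) = \bigl[\rho(x_{i-1}) - C_i\rho(x_{i+1})\bigr] + C_i\rho(x_{i+1})$; the bracketed term has total degree $1$ by Lemma \ref{totaldegree}, so if $\deg(\rho(x_{i+1})) = 1$ then the whole right-hand side has degree $\le 1$, and since $\rho$ is an automorphism $\rho(x_{i-1})$ is nonconstant, forcing $\deg(\rho(x_{i-1})) = 1$. Thus the degree-one property transports from index $i+1$ to index $i-1$, i.e.\ by a step of $-2$, irrespective of the value of $C_i$. Iterating from the seed $k-1$ yields $\deg(\rho(x_{k-1-2t})) = 1$ for all $t \ge 0$. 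Now I invoke parity: since $n$ is odd, $\gcd(2,n) = 1$, so the residues $\{\,k-1-2t \bmod n : t \ge 0\,\}$ exhaust $\mathbb{Z}_n$; in particular taking $t \equiv (n+1)/2$ (an integer, as $n$ is odd) gives $k-1-2t \equiv k-2 \pmod n$, whence $\deg(\rho(x_{k-2})) = 1$ as well. Then $k-2$ and $k-1$ are consecutive indices both of degree one, and Corollary \ref{Cor1} (with its $k$ replaced by $k-1$) finishes the argument. (Alternatively, one may skip the appeal to Corollary \ref{Cor1}, since the step-by-$2$ orbit already covers all of $\mathbb{Z}_n$ directly.)

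The only subtle point---and hence the main ``obstacle''---is the parity argument. Lemma \ref{totaldegree} lets the degree-one property move only in steps of $2$, so it a priori stays within one residue class of the index modulo $2$; when $n$ is even that class is a proper subset of $\mathbb{Z}_n$ and a single seed index is insufficient, which is entirely consistent with the $n=4$, $C_i = -1$ counterexample of Section $3$. It is precisely the oddness of $n$ that turns the step-by-$2$ orbit into a single $n$-cycle, so that one vanishing coefficient $C_k$ propagates linearity of $\rho$ to every variable.
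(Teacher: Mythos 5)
Your proof is correct and follows essentially the same route as the paper: the hypothesis $C_k=0$ seeds $\deg(\rho(x_{k-1}))=1$ via Lemma \ref{totaldegree}, the unconditional step-by-$(-2)$ propagation is iterated, and the oddness of $n$ produces two consecutive degree-one indices to which Corollary \ref{Cor1} applies. Your write-up is in fact more explicit than the paper's rather terse index bookkeeping, and your closing remark that Corollary \ref{Cor1} is dispensable (since $\gcd(2,n)=1$ makes the step-by-$2$ orbit cover all of $\mathbb{Z}_n$) is a valid simplification.
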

\begin{proof}
Let $ n= 2m+1$ for some $m \in \mathbb{N}$. As $C_k=0$, by Lemma \ref{totaldegree}, $\rho(x_{k-1})$ is a polynomial of degree 1. Further, repeated use of Lemma \ref{totaldegree} gives 
deg$(\rho(x_{k}))=$ deg$(\rho(x_{k-2m}))=$ deg$(\rho(x_{k-1-(2m+1)}))=$ deg$(\rho(x_{k-1}))= 1$. Now, by using  Corollary \ref{Cor1}, we get deg$(\rho(x_{i}))=1$ for all $ i \in \mathbb{Z}_{n}$.  
\end{proof}

The following lemma facilitates the computation of the 
  $\rho(x_{i})$'s degree  when $C_{i+1} \in K^* $.

\begin{lemma}\label{Importantlemma} 
Let $d$ be the  Lotka-Volterra derivation as defined in equation \eqref{LVD} and $\rho \in \text{Aut}(K[X])_{d}$. If $n\neq 4$ and there exists $ t \in \mathbb{Z}_{n}$ such that $C_{t+1} \in K^{*}$, then $\rho(x_{t})$ is a polynomial in $K[X]$ of total degree 1.
\end{lemma}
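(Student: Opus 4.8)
The plan is to use the commutation $\rho d=d\rho$ to realise each $\rho(x_i)$ as a Darboux polynomial of $d$, and then to read off a forbidden linear relation from its leading form. Write $f_i=\rho(x_i)$ and $g_i=\rho(x_{i-1}-C_ix_{i+1})=f_{i-1}-C_if_{i+1}$. Applying $\rho$ to $d(x_i)=x_i(x_{i-1}-C_ix_{i+1})$ gives
\[
d(f_i)=\rho(d(x_i))=f_i\,g_i,\qquad i\in\mathbb{Z}_n,
\]
and by Lemma~\ref{totaldegree} each $g_i$ has total degree $1$. Denote by $\bar h$ the top-degree homogeneous part of a polynomial $h$ and set $m_i=\deg f_i\ (\ge1)$. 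Since $d$ is homogeneous of degree $1$ and $g_i$ has degree $1$, comparing leading parts in $d(f_i)=f_ig_i$ yields $d(\bar f_i)=\bar f_i\,[g_i]_1$, so $\bar f_i$ is a homogeneous Darboux polynomial of $d$ with homogeneous linear cofactor $[g_i]_1=\ell_{i-1}-C_i\ell_{i+1}$, where $\ell_i:=[f_i]_1$ is the degree-one part of $f_i$. Because $\det\mathrm{Jac}(\rho)\in K^*$ is constant, its value at the origin, namely $\det(\partial f_i/\partial x_j|_0)$, is nonzero, so the linear forms $\ell_1,\dots,\ell_n$ are linearly independent.

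Next I would bring in the hypothesis $C_{t+1}\in K^{*}$ through the single relation $g_{t+1}=f_t-C_{t+1}f_{t+2}$ of degree $1$. Comparing top degrees forces $m_t=m_{t+2}$: if they differed, the difference $f_t-C_{t+1}f_{t+2}$ would have degree $\max(m_t,m_{t+2})\ge2$ (terms of distinct degree cannot cancel), contradicting $\deg g_{t+1}=1$. Assume, for contradiction, that the common value $m:=m_t=m_{t+2}$ is at least $2$. Then the degree-$m$ parts of $f_t$ and $C_{t+1}f_{t+2}$ must cancel in $g_{t+1}$, i.e. $\bar f_t=C_{t+1}\bar f_{t+2}$. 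Applying $d$ and using the cofactor identity of the previous paragraph, the nonzero proportional leading forms $\bar f_t$ and $\bar f_{t+2}$ share the same cofactor, so
\[
\ell_{t-1}-C_t\ell_{t+1}=\ell_{t+1}-C_{t+2}\ell_{t+3},
\]
which rearranges to
\[
\ell_{t-1}-(1+C_t)\ell_{t+1}+C_{t+2}\ell_{t+3}=0 .
\]

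The coefficient of $\ell_{t-1}$ here is $1$, so this is a nontrivial linear dependence among $\ell_{t-1},\ell_{t+1},\ell_{t+3}$ as soon as the indices $t-1,t+1,t+3$ are pairwise distinct in $\mathbb{Z}_n$. A direct check shows $t-1\equiv t+1$ and $t+1\equiv t+3$ never occur for $n\ge3$, while $t-1\equiv t+3\ (\mathrm{mod}\ n)$ forces $n\mid4$, i.e. $n=4$. Hence for $n\ne4$ the three forms are distinct members of the linearly independent family $\{\ell_i\}$, contradicting the displayed dependence; therefore $m_t=m=1$, as claimed. The decisive step, and the only place where $n\ne4$ is used, is exactly this distinctness of indices: it is the same degeneracy at $n=4$ (where $x_{i-1}$ and $x_{i+3}$ coincide) that later admits the extra higher-degree symmetries responsible for the infinite isotropy group of Theorem~\ref{infinite}.
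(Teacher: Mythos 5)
Your proof is correct, and its core coincides with the paper's: both rest on Lemma~\ref{totaldegree}, both extract from $\deg\rho(x_{t}-C_{t+1}x_{t+2})=1$ and a top-degree comparison the identity that the degree-one parts of $\rho(x_{t-1}-C_{t}x_{t+1})$ and $\rho(x_{t+1}-C_{t+2}x_{t+3})$ coincide (your ``equal cofactors'' equation is exactly the paper's $h_{1}=p_{1}$ in equation~\eqref{N2}), and both use $n\neq 4$ only to ensure that $t-1$, $t+1$, $t+3$ are pairwise distinct indices in $\mathbb{Z}_{n}$. The differences lie in the packaging and in the final contradiction. Where the paper expands $d\rho(x_{t}-C_{t+1}x_{t+2})=\rho d(x_{t}-C_{t+1}x_{t+2})$ and compares degrees inside that single identity, you phrase the same computation structurally: each leading form $\bar f_{i}$ is a homogeneous Darboux polynomial of $d$ with linear cofactor $\ell_{i-1}-C_{i}\ell_{i+1}$, and the proportionality $\bar f_{t}=C_{t+1}\bar f_{t+2}$ forces equal cofactors --- a cleaner conceptual formulation of the same step. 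More substantively, your contradiction is that $\ell_{t-1}-(1+C_{t})\ell_{t+1}+C_{t+2}\ell_{t+3}=0$ violates linear independence of the degree-one parts $\ell_{1},\dots,\ell_{n}$, which you establish via the standard but nontrivial fact that $\det\mathrm{Jac}(\rho)$ is a nonzero constant, evaluated at the origin. The paper's finish is more elementary: $h_{1}=p_{1}$ means $\rho\bigl(x_{t-1}-(1+C_{t})x_{t+1}+C_{t+2}x_{t+3}\bigr)\in K$, and an automorphism cannot send a nonconstant polynomial to a constant --- injectivity alone suffices, no Jacobian input needed. So your route buys a reusable structural fact (independence of the linear parts, equal-cofactor rigidity of proportional leading forms) at the cost of heavier machinery, while the paper reaches the same contradiction from nothing more than injectivity of $\rho$; both correctly isolate $n=4$ as the case where $x_{t-1}=x_{t+3}$ collapses the three-term relation, consistent with Remark~\ref{remarkfor4variable}.
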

\begin{proof}
Let $\rho(x_{t})=f_{s}+f_{s-1}+\cdots+ f_{0}$, where $f_{i}$'s are homogeneous polynomials of degree $i$, $f_{s} \neq 0$ and $s\geq 1$. 
From Lemma \ref{totaldegree}, we know  $\rho(x_{t}-C_{t+1}x_{t+2})$ is a polynomial of total degree $1$.
 Let $\rho(x_{t}-C_{t+1}x_{t+2})=g_{0}+g_{1}$. Then, $$C_{t+1}\rho(x_{t+2}) = \rho(x_t)- g_{0}-g_{1}= f_{s}+f_{s-1}+\cdots+ f_{0} - g_{0}-g_{1}.$$ As $\rho \in \text{Aut}(K[X])_{d}$, we have,  $d\rho(x_{t}-C_{t+1}x_{t+2})=\rho d(x_{t}-C_{t+1}x_{t+2})$. \\
 Therefore, 
\begin{align}
d \rho (x_{t}-C_{t+1}x_{t+2}) & = \rho (x_{t})\rho(x_{t-1}-C_{t}x_{t+1})-C_{t+1}\rho(x_{t+2})\rho(x_{t+1}-C_{t+2}x_{t+3})\notag \\ ~& = (f_{s}+f_{s-1}+\cdots+ f_{0})\rho(x_{t-1}-C_{t}x_{t+1})  \notag  \\ & \hspace{.5cm}-(f_{s}+f_{s-1}+\cdots+ f_{0} - g_{0}-g_{1})\rho(x_{t+1}-C_{t+2}x_{t+3}). \label{N2}
\end{align}
Let $\rho(x_{t-1}-C_{t}x_{t+1})=h_{0}+h_{1}$ and $\rho(x_{t+1}-C_{t+2}x_{t+3})={p_{0}}+{p_{1}}$ where $h_{0},p_{0} \in K$, and $h_{1}, p_{1}$ are homogeneous polynomials of degree $1$. Since $\rho(x_{t}-C_{t+1}x_{t+2})$ is linear, $d\rho(x_{t}-C_{t+1}x_{t+2})$ is a polynomial of total degree at most $2$. 

If $s\geq 2$, then from equation \eqref{N2}, we have $f_{s}h_{1}-f_{s}p_{1}=0$. This implies, $h_{1}=p_{1}$ and $\rho(x_{t-1}-C_{t}x_{t+1}) - \rho(x_{t+1}-C_{t+2}x_{t+3})= h_{0}-p_{0} \in K$. Since $n \neq 4$, $x_{t-1} \neq x_{t+3}$ and $(x_{t-1}-(1+C_{t})x_{t+1} + C_{t+2}x_{t+3})$ is a non constant polynomial in $K[X]$. 
Therefore, $\rho(x_{t-1}-(1+C_{t})x_{t+1}+ C_{t+2}x_{t+3}) \notin K$. Hence, if $s \geq 2$, we have a contradiction.   
\end{proof}

\begin{remark}\label{remarkfor4variable}
If $n=4$, then from equation \eqref{N2} in Lemma \ref{Importantlemma}, $\rho((1+C_{i+2)}) x_{i-1}-(1+C_{i})x_{i+1})=h_{0}-p_{0} \in K$. This is possible only when  $C_{i}=C_{i+2}=-1$. So, for any $ i \in \mathbb{Z}_{4}$, if either $C_{i}$ or $C_{i+2}$ is not equal to $-1$, then $\rho(x_{i})$ is a polynomial of degree 1. 
\end{remark}

\begin{corollary}\label{reducedtotaldegree}
Let $d$ be the Lotka-Volterra derivation as defined in equation \eqref{LVD} and $\rho \in \text{Aut}(K[X])_{d}$. If $n \neq 4$, then $\rho(x_{i})$ is a polynomial of degree 1 for all $ i \in \mathbb{Z}_{n}$.
\end{corollary}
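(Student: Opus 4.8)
The plan is to prove the statement one index at a time, splitting into two cases according to whether the coefficient $C_{i+1}$ vanishes. Fix $i \in \mathbb{Z}_n$; the two lemmas already established between them cover both possibilities for $C_{i+1}$, so no further machinery is needed.

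First, suppose $C_{i+1} \in K^*$. Then I would apply Lemma \ref{Importantlemma} with $t = i$: since $n \neq 4$ and $C_{t+1} = C_{i+1} \in K^*$, the lemma immediately yields that $\rho(x_i)$ is a polynomial of total degree $1$. Second, suppose $C_{i+1} = 0$. Here I would instead invoke Lemma \ref{totaldegree} at the index $i+1$, which guarantees that $\rho(x_{i} - C_{i+1}x_{i+2})$ has total degree $1$. Because $C_{i+1} = 0$ and $\rho$ is $K$-linear, this expression is simply $\rho(x_i)$, so again $\deg(\rho(x_i)) = 1$.

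Since every $i \in \mathbb{Z}_n$ falls into exactly one of these two cases, I conclude that $\deg(\rho(x_i)) = 1$ for all $i$, which is the assertion. The only place the hypothesis $n \neq 4$ is used is through Lemma \ref{Importantlemma}; the vanishing case $C_{i+1} = 0$ needs no restriction on $n$.

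There is no serious obstacle in this corollary: the substance of the argument was carried out in Lemma \ref{Importantlemma}, and the role of this statement is merely to package that lemma together with the degree-one conclusion of Lemma \ref{totaldegree} so as to handle the zero and nonzero coefficients uniformly. The one point worth checking carefully is that the dichotomy on $C_{i+1}$ is genuinely exhaustive and that, in the vanishing case, the reduction $\rho(x_i - C_{i+1}x_{i+2}) = \rho(x_i)$ is legitimate, both of which are immediate.
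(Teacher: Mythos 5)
Your proof is correct, but it is organized differently from the paper's and is in fact more economical. You run a pointwise dichotomy: for each fixed $i$, either $C_{i+1}\in K^{*}$, in which case Lemma \ref{Importantlemma} (with $t=i$, using $n\neq 4$) gives $\deg(\rho(x_i))=1$, or $C_{i+1}=0$, in which case Lemma \ref{totaldegree} applied at index $i+1$ gives that $\rho(x_i-C_{i+1}x_{i+2})=\rho(x_i)$ has degree $1$. Both applications are legitimate, the dichotomy is exhaustive, and each index is settled independently of the others. The paper instead splits globally: if every $C_i$ is nonzero it applies Lemma \ref{Importantlemma} to all indices at once, while if some $C_i=0$ it only establishes degree $1$ at the two consecutive indices $i-1$ and $i$ (via Lemma \ref{totaldegree} and then, depending on $C_{i+1}$, either Lemma \ref{totaldegree} again or Lemma \ref{Importantlemma}), and then invokes Corollary \ref{Cor1} to propagate degree $1$ from that consecutive pair to all remaining indices. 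Your route makes Corollary \ref{Cor1} unnecessary for this corollary, since the vanishing case is resolved directly at every index rather than only at one place; it also isolates cleanly that the hypothesis $n\neq 4$ enters only through Lemma \ref{Importantlemma}. What the paper's structure buys in exchange is minimal here, so your version is a legitimate, and arguably tidier, alternative proof.
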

\begin{proof}
If $C_{i} \in K^{*}$ for all $ i \in \mathbb{Z}_{n}$, then by Lemma \ref{Importantlemma}, $\rho(x_{i})$'s are polynomials  of total degree 1. Now, if there exist an $i$ such that $C_{i}=0$, then by Lemma \ref{totaldegree}, deg$(\rho(x_{i-1}))=1$. Now, we have the following two possibilities:
\begin{enumerate}
    \item $C_{i+1}=0$. In this case, deg$(\rho(x_{i}))=1$, and the result follows from  Corollary \ref{Cor1}.
    \item $C_{i+1} \neq 0$. In this case, using Lemma \ref{Importantlemma}, we get deg$(\rho(x_{i}))=1$, and now the result follows from  Corollary \ref{Cor1}.
\end{enumerate}
\end{proof}

Now, we study the isotropy group of Lotka-Volterra derivations on $K[x_{1},x_{2},x_{3}]$. The following tables display the $K$-automorphisms of $K[x_{1},x_{2},x_{3}]$, that are in the isotropy group of Lotka-Volterra derivation for different values of $C_{1}, C_{2}$, and $C_{3}$.

\begin{table}[ht]
    \centering
      \begin{tabular}{ |c|c|} 
\hline
\textbf{$C_{1}, C_{2},C_{3}$} & $\mathbf{(\rho(x_{1}),\rho(x_{2}),\rho(x_{3}))}$ \\
\hline
 $C_{1},C_{2},C_{3}$ are arbitrary & $(x_{1},x_{2},x_{3})$\\
  \hline
$C_{2}C_{3}=1, C_{1}=-1 $ &  $(-C_{3}^{-1}x_{3},x_{2},-C_{3}x_{1})$\\
 \hline
$C_{1}C_{2}=1, C_{3}=-1 $ &  $(x_{1},-C_{2}x_{3},-C_{2}^{-1}x_{2})$\\
 \hline
 $ C_{1}C_{3}=1,C_{2}=-1 $ &  $(-C_{3}^{-1}x_{2},-C_{3}x_{1},x_{3})$\\
  \hline
\multirow{2}{*} {$C_{1}=C_{2}=C_{3} $} &  $(x_{2},x_{3},x_{1})$ \\  &  $(x_{3},x_{1},x_{2})$\\
\hline

\end{tabular}
    \caption{$\mathbf{1-C_{1}C_{2}C_{3} \neq 0}$}
    \label{Table1}
\end{table}

\begin{table}[ht]
    \centering
     \begin{tabular}{ |c|c|} 
\hline
\textbf{$C_{1}, C_{2}, C_{3}$} & $\mathbf{(\rho(x_{1}),\rho(x_{2}),\rho(x_{3}))}$\\
\hline
 $C_{1},C_{2}, C_{3}$ are arbitrary & $(x_{1},x_{2},x_{3})$\\
  \hline
$C_{2}C_{3}=1, C_{1}=1 $ &  $(-C_{3}^{-1}x_{3},-x_{2},-C_{3}x_{1})$\\
 \hline
$C_{1}C_{2}=1, C_{3}=1 $ &  $(-x_{1},-C_{2}x_{3},-C_{2}^{-1}x_{2})$\\
 \hline
 $C_{1}C_{3}=1, C_{2}=1 $ &  $(-C_{3}^{-1}x_{2},-C_{3}x_{1},-x_{3})$\\
  \hline
\multirow{2}{*} {$C_{1}=C_{2}=C_{3} $} &  $(x_{2},x_{3},x_{1})$\\ &  $(x_{3},x_{1},x_{2})$\\
\hline
\multirow{2}{*} {$C_{2}=C_{3}=-1,C_{1}=1$} & $(x_{1}+x_{2},-x_{2},-x_{2}+x_{3})$ \\ & $(-x_{2}+x_{3},x_{2},x_{1}+x_{2})$ \\
\hline
\multirow{2}{*} {$C_{1}=C_{2}=-1, C_{3}=1 $} &  $(x_{1},x_{1}+x_{3},-x_{1}+x_{2})$ \\&  $(-x_{1},-x_{1}+x_{2},x_{1}+x_{3})$ \\
\hline
\multirow{2}{*} {$C_{1}=C_{3}=-1,C_{2}=1$} &  $(x_{1}-x_{3},x_{2}+x_{3},-x_{3})$ \\
& $(x_{2}+x_{3},x_{1}-x_{3},x_{3})$ \\
\hline
\end{tabular}
    \caption{$\mathbf{1-C_{1}C_{2}C_{3} = 0}$}
    \label{Table2}
\end{table}

\begin{theorem}\label{Main1}
Let $d=x_{1}(x_{3}-C_{1}x_{2})\frac{\partial }{\partial x_{1}}+x_{2}(x_{1}-C_{2}x_{3})\frac{\partial }{\partial x_{2}}+x_{3}(x_{2}-C_{3}x_{1})\frac{\partial }{\partial x_{3}}$, be a derivation of $K[x_{1},x_{2},x_{3}]$. If $1-C_{1}C_{2}C_{3}\neq 0$, then the isotropy group of $d$ is as listed in Table \ref{Table1}.
\end{theorem}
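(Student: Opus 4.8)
The plan is to exploit that any $\rho \in \mathrm{Aut}(K[x_1,x_2,x_3])_d$ is forced to be affine, and then to extract from the commutation relation $\rho d = d\rho$ enough linear constraints to pin $\rho$ down completely. First I would invoke Corollary \ref{reducedtotaldegree} (applicable since $n=3\neq 4$) to write each $\rho(x_i)=c_i+\ell_i$ with $c_i\in K$ and $\ell_i=\sum_j a_{ij}x_j$ homogeneous of degree $1$; since $\rho$ is an automorphism the matrix $A=(a_{ij})$ is invertible. Evaluating $\rho d = d\rho$ on each generator $x_i$ gives, for $i\in\mathbb{Z}_3$,
\[
\rho(x_i)\,\rho(x_{i-1}-C_i x_{i+1}) \;=\; \sum_{j} a_{ij}\,x_j(x_{j-1}-C_j x_{j+1}),
\]
and I would compare the homogeneous components of degrees $0,1,2$ on both sides. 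The crucial structural feature is that the right-hand side is homogeneous of degree $2$ and contains \emph{no} square monomial $x_k^2$; only the adjacent products $x_1x_2,x_2x_3,x_1x_3$ occur.

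The degree-$2$ component reads $\ell_i(\ell_{i-1}-C_i\ell_{i+1})=(\text{RHS})$. Since the $x_k^2$-coefficient of a product of two linear forms is the product of their $x_k$-coefficients, the absence of squares yields $a_{ik}\,(a_{i-1,k}-C_i a_{i+1,k})=0$ for every $i,k$; moreover $\ell_{i-1}-C_i\ell_{i+1}\neq 0$ for each $i$, since otherwise rows $i-1$ and $i+1$ of $A$ would be linearly dependent, contradicting invertibility. I would then run the case analysis on the supports of the rows of $A$: if some $\ell_i$ has at least two nonzero coefficients, matching the surviving degree-$2$ coefficients collapses to the relation $1-C_1C_2C_3=0$ (this is precisely the non-monomial family recorded in Table \ref{Table2}), which is excluded by hypothesis. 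Hence $A$ must be a generalized permutation (monomial) matrix, so $\rho(x_i)=\lambda_i x_{\sigma(i)}+c_i$ for some $\sigma\in S_3$ and $\lambda_i\in K^*$.

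With $A$ monomial, I would return to the lower-degree components, namely $c_i(\ell_{i-1}-C_i\ell_{i+1})+(c_{i-1}-C_i c_{i+1})\ell_i=0$ in degree $1$ and $c_i(c_{i-1}-C_i c_{i+1})=0$ in degree $0$; using invertibility together with $1-C_1C_2C_3\neq 0$ I would conclude that all $c_i$ vanish, so $\rho$ is linear. It then remains to substitute $\rho(x_i)=\lambda_i x_{\sigma(i)}$ into the degree-$2$ equations for each of the six $\sigma\in S_3$ in turn: the identity always solves them; a transposition forces the corresponding two-factor relation on the $C_i$ (for instance $\sigma=(1\,3)$ forces $C_2C_3=1$ and $C_1=-1$) and determines the $\lambda_i$ up to the displayed normalizations; and a $3$-cycle forces $C_1=C_2=C_3$. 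Assembling these cases reproduces exactly the entries of Table \ref{Table1}.

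I expect the main obstacle to be the degree-$2$ case analysis: organizing the possible supports of the rows of the monomial-candidate matrix $A$ and verifying, in every configuration where a row carries more than one nonzero entry, that the remaining coefficient comparisons indeed force $1-C_1C_2C_3=0$. The accompanying bookkeeping of the cyclic indices in $\mathbb{Z}_3$ and of the scalars $\lambda_i$ is routine but must be handled carefully to recover the precise factors $-C_3^{\pm 1}$ appearing in Table \ref{Table1}.
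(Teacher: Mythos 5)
Your proposal is correct and follows essentially the same route as the paper's proof: reduce to affine automorphisms via Corollary \ref{reducedtotaldegree}, extract the square-coefficient relations $c_{ik}(c_{(i-1)k}-C_ic_{(i+1)k})=0$ (the paper's equation \eqref{T1}), use invertibility together with $1-C_1C_2C_3\neq 0$ to force a generalized permutation matrix, kill the constant terms, and then enumerate over $\sigma\in S_3$ exactly as in the paper's equation \eqref{T4.1} and Table \ref{Table3}. The case analysis you defer as the main obstacle is precisely the content of the paper's Cases 1 and 2, and it does close as you predict (indeed, reading the relations column by column shows each column of the matrix can have only one nonzero entry when $1-C_1C_2C_3\neq 0$), so the plan is sound.
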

\begin{proof}
Let $\rho \in  Aut( K[ x_{1},x_{2},x_{3} ] )_{d}$. By Corollary \ref{reducedtotaldegree}, we know that $\rho(x_{i})$'s are polynomials in $K[x_{1},x_{2},x_{3}]$ of total degree $1$.\\ For $ i \in \mathbb{Z}_{3}$, let 
$$\rho(x_{i})=c_{i}+\sum_{j=1}^3 c_{ij}x_{j},$$
where $c_{i}, c_{ij} \in K$, for all $ i, j \in \mathbb{Z}_{3}$.\newline
Then equation $d\rho\left ( x_{1} \right ) =\rho d\left ( x_{1} \right )$ gives,
\begin{equation}
\begin{aligned}
\sum_{j=1}^3 c_{1j}x_{j}(x_{j-1}-C_{j}x_{j+1})=
(c_{1}+ \sum_{j=1}^3 c_{1j}x_{j}) 
&\big((c_{3}+\sum_{j=1}^3 c_{3j}x_{j})-C_{1}(c_{2}+\sum_{j=1}^3 c_{2j}x_{j})\big).
\end{aligned}\label{Eq2}
\end{equation}
Similarly, $d\rho\left ( x_{2} \right ) =\rho d\left ( x_{2} \right )$ and $d\rho\left ( x_{3} \right ) =\rho d\left ( x_{3} \right )$ gives,
\begin{equation} 
\begin{aligned}
\sum_{j=1}^3 c_{2j}x_{j}(x_{j-1}-C_{j}x_{j+1})=(c_{2}+\sum_{j=1}^3 c_{2j}x_{j})
&\big((c_{1}+\sum_{j=1}^3 c_{1j}x_{j})- ~ C_{2}(c_{3}+\sum_{j=1}^3 c_{3j}x_{j})\big) \end{aligned}\label{Eq3}
\end{equation}
 and
 \begin{equation}
\begin{aligned}
\sum_{j=1}^3 c_{3j}x_{j}(x_{j-1}-C_{j}x_{j+1})=(c_{3}+\sum_{j=1}^3 c_{3j}x_{j}) 
&\big((c_{2}+\sum_{j=1}^3 c_{2j}x_{j})-C_{3}(c_{1}+\sum_{j=1}^3 c_{1j}x_{j})\big). \end{aligned}\label{Eq4}
\end{equation}

By comparing the coefficient of $x_{1}^{2},~x_{2}^{2}$ and $x_{3}^{2}$ in equations \eqref{Eq2}-\eqref{Eq4}, we have
{\begin{equation}\label{T1}
\left.\begin{matrix}
 x_{1}^{2} & x_{2}^{2} & x_{3}^{2}\\ 
c_{i1}(c_{(i-1)1}-C_{i}c_{(i+1)1})=0 & c_{i2}(c_{(i-1)2}-C_{i}c_{(i+1)2})=0 &  c_{i3}(c_{(i-1)3}-C_{i}c_{(i+1)3})=0
\end{matrix}\right\},
\end{equation}}
where $i \in  \mathbb{Z}_3$. \\
Since $\rho$ is an automorphism,  $$\rho(x_{i-1}-C_{i}x_{i+1})=(c_{i-1}-C_{i}c_{i+1})+\sum_{j=1}^{3} (c_{(i-1)j}-C_{i}c_{(i+1)j})x_{j}\notin K, ~~~~\text{for all} ~~ i \in \mathbb{Z}_3.$$ In particular, for $i =k$, 
$\rho(x_{k-1}-C_{k}x_{k+1})=(c_{k-1}-C_{k}c_{k+1})+\sum_{j=1}^{3} (c_{(k-1)j}-C_{k}c_{(k+1)j})x_{j} \notin K$. Hence, there exists a $ t \in \mathbb{Z}_{3}$ such that $c_{(k-1)t}-C_{k}c_{(k+1)t} \neq 0$, which by using equation \eqref{T1} implies  $c_{kt}=0$.

Now, by using the coefficient of $x_{t}^{2}$ in equation \eqref{T1} for $i=k+2$, we have $c_{(k+1)t}c_{(k+2)t}=0$. This implies, either $c_{(k+1)t}=0$ or $c_{(k+2)t}=0$ but not both (as $\rho$ is an automorphism). 
Without loss of generality, we assume that $c_{(k-1)t}= c_{(k+2)t}=0$. Then $c_{kt}-C_{k+1}c_{(k+2)t}=0$, and  
\begin{small}
$$\rho(x_{k}-C_{k+1}x_{k+2})= (c_{k}-C_{k+1}c_{k+2})+(c_{k(t-1)}-C_{k+1}c_{(k+2)(t-1)})x_{t-1}+(c_{k(t+1)}-C_{k+1}c_{(k+2)(t+1)})x_{t+1}.$$
\end{small}
Since $\rho(x_{k}-C_{k+1}x_{k+2}) \notin K$, either $c_{k(t-1)}-C_{k+1}c_{(k+2)(t-1)} \neq 0$ or $c_{k(t+1)}-C_{k+1}c_{(k+2)(t+1)} \neq 0$.

\textbf{Case 1.} If $c_{k(t+1)}-C_{k+1}c_{(k+2)(t+1)} \neq 0$. Then, from equation \eqref{T1} for $i=k+1$, $c_{(k+1)(t+1)}=0$.
Now, from the coefficient of $x_{t+1}^{2}$ in equation \eqref{T1} for $i=k$, we have $c_{k(t+1)}c_{(k-1)(t+1)}=0$. Thus, either $c_{k(t+1)}=0$ or $c_{(k-1)(t+1)}=0$, but not both. 

Let us consider first $c_{k(t+1)}=0$. In this case, $\rho$ is given by $\rho(x_{k-1})=c_{k-1}+c_{(k-1)(t-1)}x_{t-1}+c_{(k-1)(t+1)}x_{t+1}$, $\rho(x_{k})=c_{k}+c_{k(t-1)}x_{t-1}$, $\rho(x_{k+1})=c_{k+1}+c_{(k+1)(t-1)}x_{t-1}+c_{(k+1)t}x_{t} $.

Now, from the coefficients of $x_{t-1}^{2}$ in equation \eqref{T1}, we have 
\begin{equation}\label{T3}
 c_{p(t-1)}(c_{(p-1)(t-1)}-C_{p}c_{(p+1)(t-1)})=0  \hspace{1cm} \forall~~ p \in \{k,k\pm1 \}. 
\end{equation}
 Since $1-C_{1}C_{2}C_{3}\neq 0$, one can observe that, $c_{(p-1)(t-1)}-C_{p}c_{(p+1)(t-1)} \neq 0 $ for at least one $p$; otherwise, $(1-C_{1}C_{2}C_{3})c_{p(t-1)}=0$ for all $p  \in \{k,k\pm1 \}$, which is not possible as $\rho(x_{k}) \notin K$. Also,  $c_{(k-1)(t-1)}-C_{k}c_{(k+1)(t-1)}=0$ as $c_{k(t-1)} \neq 0$. Hence, either $c_{k(t-1)}-C_{k+1}c_{(k-1)(t-1)} \neq 0$ or $c_{(k+1)(t-1)}-C_{k-1}c_{k(t-1)} \neq 0 $. If $c_{k(t-1)}-C_{k+1}c_{(k-1)(t-1)} \neq 0$ then equation \eqref{T3}, for $p=k+1$, gives $c_{(k+1)(t-1)}=0$. Now, from the coefficient of $x_{t-1}^{2}$ in equation \eqref{T1} for $i=k$, the relation $c_{(k-1)(t-1)}c_{k(t-1)}=0$ gives $c_{(k-1)(t-1)}=0$. 
 Similarly, when $c_{(k+1)(t-1)}-C_{k-1}c_{k(t-1)} \neq 0 $, one can observe that $c_{(k-1)(t-1)}=c_{(k+1)(t-1)}=0$. Therefore, 
\begin{align}
\rho(x_{k-1})&=c_{k-1}+c_{(k-1)(t+1)}x_{t+1}, \notag\\
\rho(x_{k})&=c_{k}+c_{k(t-1)}x_{t-1}, \notag\\
\rho(x_{k+1})&=c_{k+1}+c_{(k+1)t}x_{t} \notag.
\end{align}
Now, if we consider the case $c_{(k-1)(t+1)}=0$, using a similar argument as in the case of $c_{k(t+1)}=0$, we observe that $\rho$ is of the form 
\begin{align}
\rho(x_{k-1})&=c_{k-1}+c_{(k-1)(t-1)}x_{t-1}, \notag\\
\rho(x_{k})&=c_{k}+c_{k(t+1)}x_{t+1}, \notag\\
\rho(x_{k+1})&=c_{k+1}+c_{(k+1)t}x_{t} \notag.
\end{align}
\par \textbf{Case 2.} $c_{k(t-1)}-C_{k+1}c_{(k-1)(t-1)} \neq 0$. Then from equation \eqref{T1} for $i=k+1$, we have $c_{(k+1)(t-1)}=0$. Now, using the argument as in the proof of \textbf{Case 1}, one can observe that $\rho$ can have one of the following two forms:
\begin{align*}
\rho(x_{k-1})&=c_{k-1}+c_{(k-1)(t-1)}x_{t-1},         &  \hspace{0.5cm}&     &   \rho(x_{k-1})&=c_{k-1}+c_{(k-1)(t+1)}x_{t+1},   \\
\rho(x_{k})&=c_{k}+c_{k(t+1)}x_{t+1},     &  \text{or}&   &     \rho(x_{k})&=c_{k}+c_{k(t-1)}x_{t-1},  \\
\rho(x_{k+1})&=c_{k+1}+c_{(k+1)t}x_{t},  &    \hspace{0.5cm}&    & \rho(x_{k+1})&=c_{k+1}+c_{(k+1)t}x_{t}.
\end{align*}
\par
Since $k$ and $t$ are arbitrary, $\rho$ can be written using a permutation of three symbols as;
\begin{align}
&\rho(x_{1})=c_{1}+c_{1\sigma(1)}x_{\sigma(1)}, \notag\\
&\rho(x_{2})=c_{2}+c_{2\sigma(2)}x_{\sigma(2)}, \notag\\
&\rho(x_{3})=c_{3}+c_{3\sigma(3)}x_{\sigma(3)}, \notag
\end{align} 
where $\sigma \in S_{3}$,  and $c_{1\sigma(1)},c_{2\sigma(2)},c_{3\sigma(3)} \in K^{*}$. 

Now, for $ i \in \mathbb{Z}_{3}$, the equation $d\rho(x_{i})=\rho d(x_{i})$ reduces to 
\begin{align}
c_{i\sigma(i)}x_{\sigma(i)}(x_{\sigma(i)-1}-C_{\sigma(i)}x_{\sigma(i)+1})= &(c_{i}+c_{i\sigma(i)}x_{\sigma(i)})\big((c_{i-1}+c_{(i-1)\sigma(i-1)}x_{\sigma(i-1)}) \notag \\&-C_{i}(c_{i+1}+c_{(i+1)\sigma(i+1)}x_{\sigma(i+1)})\big).
\label{T4}    
\end{align}
As $\sigma \in S_{3}$, $x_{\sigma(i)-1} = x_{\sigma(i-1)}$ or $x_{\sigma(i+1)}$. Without loss of generality, we assume that $x_{\sigma(i)-1} = x_{\sigma(i-1)}$. Then $x_{\sigma(i)+1} = x_{\sigma(i+1)}$, and by comparing the coefficient of  $x_{\sigma(i+1)}$, we get $c_{i}=0$ for all $ i \in \mathbb{Z}_{3}$.
Further, equation \eqref{T4} reduces to 
\begin{equation}\label{T4.1} 
 x_{\sigma(i)-1}-C_{\sigma(i)}x_{\sigma(i)+1}=c_{(i-1)\sigma(i-1)}x_{\sigma(i-1)} -C_{i}c_{(i+1)\sigma(i+1)}x_{\sigma(i+1)}. 
\end{equation}
Now, for a given $\sigma \in S_{3}$, the automorphisms that are in the isotropy group of Lotka-Volterra derivation for various values of $C_1, C_2$, and $C_3$ can be computed using equation \eqref{T4.1} and are listed in the table below. 
\end{proof}

\begin{small}
\begin{table}[ht]
\begin{tabular}{ |c|c|c|} 
\hline
  $ \sigma \in S_{3}$  &
$\mathbf{(\rho(x_{1}),\rho(x_{2}),\rho(x_{3}))}$ & Possible values of $C_{1}, C_{2},C_{3}$\\
\hline
  $id$ & $(x_{1},x_{2},x_{3})$ &  $C_{1}, C_{2},C_{3}$ are arbitrary  \\
\hline
  $(12)$ & $(-C_{3}^{-1}x_{2},-C_{3}x_{1},x_{3})$ & $ C_{1}C_{3}=1,C_{2}=-1 $\\
\hline
  $(13)$ & $(-C_{3}^{-1}x_{3},x_{2},-C_{3}x_{1})$ & $C_{2}C_{3}=1, C_{1}=-1 $\\
 \hline
  $(23)$ & $(x_{1},-C_{2}x_{3},-C_{2}^{-1}x_{2})$ &$C_{1}C_{2}=1, C_{3}=-1 $\\
 \hline
   $(123)$ & $(x_{2},x_{3},x_{1})$ & $C_{1}=C_{2}=C_{3} $\\
 \hline
  $(132)$ & $(x_{3},x_{1},x_{2})$ &  $C_{1}=C_{2}=C_{3} $\\
\hline
\end{tabular}
\caption{\textbf{$\rho$ for different values of $\sigma \in S_{3}$}}
    \label{Table3}
\end{table}
\end{small}


\begin{corollary}
Let $d$ be the derivation as defined in Theorem \ref{Main1}. If $C_{1}=C_{2}=C_{3}=-1$, then the isotropy group of $d$ is isomorphic to the dihedral group $D_{6}$ of order $6$.
\end{corollary}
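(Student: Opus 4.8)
The plan is to reduce everything to Theorem \ref{Main1}. First I would record that with $C_1=C_2=C_3=-1$ we have
\[
1-C_1C_2C_3 = 1-(-1)^3 = 2 \neq 0,
\]
so the hypothesis of Theorem \ref{Main1} is met and the isotropy group of $d$ is exactly the set of automorphisms listed in Table \ref{Table1}. Thus no new computation with the commutation relations $d\rho(x_i)=\rho d(x_i)$ is needed; it suffices to read off which rows of Table \ref{Table1} become active and to evaluate the corresponding entries at $C_i=-1$.

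Next I would substitute $C_1=C_2=C_3=-1$ into each row. The identity row always contributes $(x_1,x_2,x_3)$. The constraints $C_2C_3=1,\,C_1=-1$; $\ C_1C_2=1,\,C_3=-1$; $\ C_1C_3=1,\,C_2=-1$ are all satisfied simultaneously (since $(-1)(-1)=1$), and using $C_3^{-1}=C_2^{-1}=-1$ the three entries collapse to $(x_3,x_2,x_1)$, $(x_1,x_3,x_2)$, and $(x_2,x_1,x_3)$ respectively. Finally the condition $C_1=C_2=C_3$ holds, contributing $(x_2,x_3,x_1)$ and $(x_3,x_1,x_2)$. Collecting these, the isotropy group consists of precisely the six automorphisms
\[
(x_1,x_2,x_3),\ (x_2,x_1,x_3),\ (x_1,x_3,x_2),\ (x_3,x_2,x_1),\ (x_2,x_3,x_1),\ (x_3,x_1,x_2),
\]
which are exactly all the coordinate permutations of $x_1,x_2,x_3$.

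To finish I would identify the group structure. I would define the map $\Phi$ sending each such $\rho$ to the underlying permutation $\pi\in S_3$ of the index set $\{1,2,3\}$ determined by $\rho(x_i)=x_{\pi(i)}$. This is a bijection onto $S_3$, and since composition of coordinate permutations corresponds to composition (up to the harmless anti-isomorphism $S_3\cong S_3^{\mathrm{op}}$) of index permutations, $\Phi$ realises the isotropy group as $S_3$. As $S_3$ is the dihedral group of order $6$, the conclusion $\operatorname{Aut}(K[x_1,x_2,x_3])_d\cong D_6$ follows; concretely one may take the generator of order $3$ to be $(x_1,x_2,x_3)\mapsto(x_2,x_3,x_1)$ and the generator of order $2$ to be $(x_1,x_2,x_3)\mapsto(x_2,x_1,x_3)$, and check the dihedral relation $\sigma\tau\sigma^{-1}=\tau^{-1}$. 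There is no genuine obstacle here once Theorem \ref{Main1} is in hand; the only point requiring a line of care is confirming closure, i.e.\ that the six listed maps are stable under composition, which is immediate from their description as the full permutation group on the coordinates.
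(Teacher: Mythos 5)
Your proposal is correct and takes essentially the same route as the paper: the paper's proof likewise notes that $1-C_{1}C_{2}C_{3}=2\neq 0$, invokes Theorem~\ref{Main1} (reading the entries off Table~\ref{Table3}, which is the source of Table~\ref{Table1}), and concludes that the isotropy group is exactly the six coordinate permutations of $x_{1},x_{2},x_{3}$, i.e.\ $S_{3}\cong D_{6}$. Your explicit identification of the group structure (generators of order $3$ and $2$ and the dihedral relation) is a minor elaboration that the paper leaves implicit.
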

  \begin{proof}
If $C_{1}=C_{2}=C_{3}=-1$, then $1-C_{1}C_{2}C_{3} \neq 0$. Hence, from Table \ref{Table3}, Aut$(K[x_1, x_2, x_3])_d$= $\{(\rho(x_{1}),\rho(x_{2}),\rho(x_{3}))=(x_{1},x_{2},x_{3}),(x_{2},x_{1},x_{3}),(x_{3},x_{2},x_{1}),(x_{1},x_{3},x_{2}),(x_{2},x_{3},x_{1}),(x_{3},x_{1},x_{2})\}$.
\end{proof}

\begin{corollary}
Let $d$ be the derivation as defined in Theorem \ref{Main1}. If $C_{1}C_{2}C_{3}=0$ and $C_{i} \neq 0$ for some $ i \in \mathbb{Z}_{3}$, then the isotropy group of $d$ is trivial.
\end{corollary}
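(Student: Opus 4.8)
The plan is to deduce this directly from the classification already obtained in Theorem \ref{Main1}. Since $C_{1}C_{2}C_{3}=0$, we immediately have $1-C_{1}C_{2}C_{3}=1\neq 0$, so the hypothesis of Theorem \ref{Main1} is satisfied and every $\rho \in \text{Aut}(K[x_{1},x_{2},x_{3}])_{d}$ must appear among the rows of Table \ref{Table3} (equivalently Table \ref{Table1}). It therefore suffices to examine each of the five nontrivial rows and check that the constraint on $C_{1},C_{2},C_{3}$ attached to it is incompatible with the standing hypotheses, which forces $\rho$ to be the identity.

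First I would dispose of the three transposition rows $(12)$, $(13)$, $(23)$. Each of these requires a relation of the form $C_{a}C_{b}=1$ together with $C_{c}=-1$, where $\{a,b,c\}=\{1,2,3\}$; for instance, the row $(12)$ needs $C_{1}C_{3}=1$ and $C_{2}=-1$. But $C_{a}C_{b}=1$ forces $C_{a},C_{b}\in K^{*}$, and $C_{c}=-1\neq 0$, so all three coefficients would be nonzero, contradicting $C_{1}C_{2}C_{3}=0$. Hence none of the transposition automorphisms can lie in the isotropy group.

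Next I would treat the two $3$-cycle rows $(123)$ and $(132)$, both of which require $C_{1}=C_{2}=C_{3}$. If this common value is nonzero then $C_{1}C_{2}C_{3}\neq 0$, again contradicting the hypothesis; and if it is zero then $C_{1}=C_{2}=C_{3}=0$, contradicting the assumption that $C_{i}\neq 0$ for some $i\in\mathbb{Z}_{3}$. Thus neither $3$-cycle occurs. Since only the identity row of Table \ref{Table3} survives, the isotropy group of $d$ is trivial.

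This argument is a purely finite case check once Theorem \ref{Main1} is in force, so I do not expect a substantive obstacle; the only point requiring care is to verify that the defining constraints of the nontrivial rows are genuinely mutually exclusive with the simultaneous conditions $C_{1}C_{2}C_{3}=0$ and $C_{i}\neq 0$ for some $i$, which is exactly what the two preceding paragraphs confirm.
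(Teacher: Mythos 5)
Your proof is correct and is essentially identical to the paper's: the paper's entire proof is ``The proof follows from Table \ref{Table3},'' and your argument simply makes explicit the row-by-row case check implicit in that citation. Nothing is missing; your verification that the transposition rows force all $C_i$ nonzero and that the $3$-cycle rows force all $C_i$ equal (hence all zero, contradicting the hypothesis) is exactly the intended reasoning.
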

 \begin{proof}
The proof follows from Table \ref{Table3}.
  \end{proof}

\begin{theorem}\label{Main2}
Let $d=x_{1}(x_{3}-C_{1}x_{2})\frac{\partial }{\partial x_{1}}+x_{2}(x_{1}-C_{2}x_{3})\frac{\partial }{\partial x_{2}}+x_{3}(x_{2}-C_{3}x_{1})\frac{\partial }{\partial x_{3}}$, be a derivation of $K[x_{1},x_{2},x_{3}]$. If $1-C_{1}C_{2}C_{3} = 0$, then the isotropy group of $d$ is as listed in Table \ref{Table2}.
\end{theorem}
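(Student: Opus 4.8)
The plan is to re-use the framework developed for Theorem \ref{Main1} and to locate the single step at which the proof there used $1-C_1C_2C_3\neq 0$, since that is exactly where the two cases part company. As before, take $\rho\in\text{Aut}(K[x_1,x_2,x_3])_d$; by Corollary \ref{reducedtotaldegree} each $\rho(x_i)=c_i+\sum_{j=1}^3 c_{ij}x_j$ is affine, and the identities \eqref{Eq2}--\eqref{Eq4} together with the quadratic relations \eqref{T1} hold verbatim, since nothing in their derivation depended on the value of $1-C_1C_2C_3$. The argument of Theorem \ref{Main1} then proceeds unchanged up to the place, inside Case 1, where it invoked $1-C_1C_2C_3\neq 0$ to exclude the possibility that an entire column $j$ of $(c_{ij})$ satisfies the cyclic relation $c_{(p-1)j}-C_pc_{(p+1)j}=0$ for every $p\in\mathbb{Z}_3$; indeed, chaining those three relations forces $(1-C_1C_2C_3)c_{pj}=0$. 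Under the present hypothesis this obstruction vanishes, so I would organise the proof around the dichotomy: either no column of $(c_{ij})$ is cyclic, or exactly one is.

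In the first branch the verbatim argument of Theorem \ref{Main1} still drives $\rho$ into the monomial shape $\rho(x_i)=c_i+c_{i\sigma(i)}x_{\sigma(i)}$ with $\sigma\in S_3$; equation \eqref{T4} then forces $c_i=0$ and the reduced system \eqref{T4.1} fixes the admissible $C_i$ and the coefficients for each $\sigma$ exactly as in Table \ref{Table3}. The only change from Theorem \ref{Main1} is in the scalar conditions: for a transposition the relation $\rho$ must satisfy produces a square such as $C_2^2=1$, and the sign branch now compatible with $1-C_1C_2C_3=0$ is $C_i=1$ rather than $C_i=-1$, which merely flips a sign in the corresponding image. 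This recovers the first five rows of Table \ref{Table2}: the identity, one reflection-type map for each transposition in $S_3$ (with the accompanying $C_i=1$ condition), and the two $3$-cycles for $C_1=C_2=C_3$.

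The genuinely new phenomenon lives in the second branch. Here, using the $\mathbb{Z}_3$-cyclic symmetry of $d$ (conjugation by the shift $x_i\mapsto x_{i+1}$, which permutes the $C_i$) to reduce to a representative choice, say the cyclic column $j=2$, I solve $c_{(p-1)2}=C_pc_{(p+1)2}$; since $1-C_1C_2C_3=0$ its solutions form the one-dimensional line spanned by $(C_1C_2,\,1,\,C_1)$, and uniqueness of this line shows at most one column can be cyclic without destroying invertibility of $(c_{ij})$. The remaining two columns are therefore non-cyclic, so the $x^2$-relations \eqref{T1} confine each of them to a single nonzero entry, and invertibility forces those two entries to lie in distinct rows. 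Substituting this rigid shape back into \eqref{Eq2}--\eqref{Eq4} and comparing the mixed monomials $x_jx_{j'}$ pins down both the scalars and the $C_i$, and one finds that a non-monomial $\rho$ survives only when exactly one $C_i$ equals $1$ and the other two equal $-1$, each such configuration yielding two automorphisms. The cyclic symmetry then identifies the three choices of cyclic column as relabellings of one another, producing precisely the last three rows of Table \ref{Table2}.

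I expect the main obstacle to lie entirely in this second branch: establishing completeness, namely that the mixed-term equations rigidify the one free scalar in the cyclic column and admit no solutions beyond those tabulated, and that the $C_i$-pattern is exactly ``one $1$, two $-1$'' rather than a larger family allowed merely by $1-C_1C_2C_3=0$. A secondary subtlety is the degree-$0$ comparison, which imposes the same cyclic relation on the constant vector $(c_1,c_2,c_3)$ and must be shown to force $c_i=0$; one must also check the overlap at $C_1=C_2=C_3=1$, where the ``one $1$, two $-1$'' condition fails, so only the monomial rows apply and the six maps assemble into $D_6$. I would close with the routine verification that every map listed in Table \ref{Table2} does commute with $d$, so that the table is exact rather than merely an upper bound.
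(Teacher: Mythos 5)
Your proposal is correct, and it takes a genuinely different route from the paper's own proof. The paper proves Theorem \ref{Main2} from scratch: it re-derives the analogues of \eqref{Eq2}--\eqref{T1} (its equations \eqref{T5}--\eqref{T8}) and re-runs the whole nested case analysis of Theorem \ref{Main1} (cases on which differences $c_{(i-1)j}-C_ic_{(i+1)j}$ vanish, then subcases (a)/(b) on further vanishing), and the triangular automorphisms such as $(x_1+x_2,-x_2,-x_2+x_3)$ simply emerge inside the ``(b)'' subcases, where conditions like $C_{k-1}=C_k=-1$, $C_{k+1}=1$ are extracted along the way; the hypothesis $1-C_1C_2C_3=0$ is never isolated as a single structural feature. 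You instead pinpoint the one step where Theorem \ref{Main1} used $1-C_1C_2C_3\neq 0$ --- to forbid a column of $(c_{ij})$ satisfying all three relations $c_{(p-1)j}=C_pc_{(p+1)j}$ --- and split on whether such a cyclic column exists. This buys real economy and clarity: in the no-cyclic-column branch Theorem \ref{Main1}'s argument applies verbatim and only the solution of \eqref{T4.1} changes (the transposition rows now force $C_i=1$ rather than $C_i=-1$, flipping one sign in the image, exactly as Table \ref{Table2} records); in the cyclic-column branch you get a priori rigidity --- the cyclic column lies on the line $K\cdot(C_1C_2,1,C_1)$, invertibility permits at most one such column, and the other two columns are single-entry by \eqref{T1} (this uses $C_i\in K^*$, which indeed follows from $C_1C_2C_3=1$). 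Since a single-entry column can never be cyclic, your dichotomy coincides exactly with monomial versus non-monomial $\rho$, so no solutions can straddle the two branches. Your normalization by the cyclic shift is also legitimate, as it conjugates Lotka--Volterra derivations to Lotka--Volterra derivations with cyclically permuted $C_i$'s and conjugates isotropy groups accordingly; the paper achieves the same effect by carrying the general indices $k,t$ through the entire proof. What your plan leaves to execute is the finite mixed-monomial computation in the cyclic branch (six placements of the two single-entry columns), but the outcome you predict for it --- solutions exactly when one $C_i$ equals $1$ and the other two equal $-1$, two automorphisms per pattern --- is precisely what the paper's subcases (b) produce, and your closing checklist (constants forced to zero, the $C_1=C_2=C_3=1$ overlap, verification that the tabulated maps commute with $d$) covers the remaining subtleties.
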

\begin{proof}
Let $\rho \in  \text{Aut}( K[ x_{1},x_{2},x_{3}])_{d}$. 
Then, for $ i \in \mathbb{Z}_{3}$, we have 
$$\rho(x_{i})=c_{i}+\sum_{j=1}^3 c_{ij}x_{j},$$
where $c_{i},c_{ij} \in K$,  for all $ i, j \in \mathbb{Z}_{3}$.\\
Further, the relation $d\rho\left ( x_{i} \right ) =\rho d\left ( x_{i} \right )$ for $ i  \in \mathbb{Z}_{3}$  gives
\begin{equation}
\begin{aligned}
\sum_{j=1}^3 c_{1j}x_{j}(x_{j-1}-C_{j}x_{j+1})=
 (c_{1}+ \sum_{j=1}^3 c_{1j}x_{j}) 
&\big((c_{3}+\sum_{j=1}^3 c_{3j}x_{j})-C_{1}(c_{2}+\sum_{j=1}^3 c_{2j}x_{j})\big),
\end{aligned}\label{T5}
\end{equation}
\begin{equation} 
\begin{aligned}
\sum_{j=1}^3 c_{2j}x_{j}(x_{j-1}-C_{j}x_{j+1})=(c_{2}+\sum_{j=1}^3 c_{2j}x_{j})
&\big((c_{1}+\sum_{j=1}^3 c_{1j}x_{j})- ~ C_{2}(c_{3}+\sum_{j=1}^3 c_{3j}x_{j})\big), \end{aligned}\label{T6}
\end{equation}
 and
 \begin{equation}
\begin{aligned}
\sum_{j=1}^3 c_{3j}x_{j}(x_{j-1}-C_{j}x_{j+1})=(c_{3}+\sum_{j=1}^3 c_{3j}x_{j}) 
&\big((c_{2}+\sum_{j=1}^3 c_{2j}x_{j})-C_{3}(c_{1}+\sum_{j=1}^3 c_{1j}x_{j})\big). \end{aligned}\label{T7}
\end{equation}

By comparing the coefficient of $x_{1}^{2},~x_{2}^{2}$ and $x_{3}^{2}$ in equations \eqref{T5}-\eqref{T7}, we have
\begin{equation}\label{T8}
\left.\begin{matrix}
 x_{1}^{2} & x_{2}^{2} & x_{3}^{2}\\ 
c_{i1}(c_{(i-1)1}-C_{i}c_{(i+1)1})=0 & c_{i2}(c_{(i-1)2}-C_{i}c_{(i+1)2})=0 &  c_{i3}(c_{(i-1)3}-C_{i}c_{(i+1)3})=0
\end{matrix}\right\}
\end{equation}
where $i \in \mathbb{Z}_{3}$. 

Note that $1-C_{1}C_{2}C_{3} = 0$; this implies $C_{i} \in K^{*}$ for all $i$. Since $\rho$ is an automorphism, 
 $\rho(x_{i-1}-C_{i}x_{i+1}) \notin K$ for all $ i \in \mathbb{Z}_{3}$.
In particular for $i =k$, 
$\rho(x_{k-1}-C_{k}x_{k+1}) 
=(c_{k-1}-C_{k}c_{k+1})+\sum_{j=1}^{3} (c_{(k-1)j}-C_{k}c_{(k+1)j})x_{j} \notin K$. Hence, there exists a $ t \in \mathbb{Z}_{3}$ such that $c_{(k-1)t}-C_{k}c_{(k+1)t} \neq 0$, which implies $c_{kt}=0$,
and by using the coefficient of $x_{t}^{2}$ in equation \eqref{T8} for $i=k+1$, we have $c_{(k-1)t}c_{(k+1)t}=0$. This implies either $c_{(k-1)t}=0$ or $c_{(k+1)t}=0$ but not both (as $\rho$ is an automorphism). 
Without loss of generality,  assume that $c_{(k-1)t}=0$. Then, $c_{(k+1)t} \neq 0$, and
\begin{small}
$$\rho(x_{k}-C_{k+1}x_{k+2})= (c_{k}-C_{k+1}c_{k+2})+(c_{k(t-1)}-C_{k+1}c_{(k+2)(t-1)})x_{t-1}+(c_{k(t+1)}-C_{k+1}c_{(k+2)(t+1)})x_{t+1}$$ 
\end{small}
with, either $c_{k(t-1)}-C_{k+1}c_{(k+2)(t-1)} \neq 0$ or $c_{k(t+1)}-C_{k+1}c_{(k+2)(t+1)} \neq 0$.

\textbf{Case 1.} $c_{k(t+1)}-C_{k+1}c_{(k+2)(t+1)} \neq 0$. Then, from equation \eqref{T8} for $i=k+1$, $c_{(k+1)(t+1)}=0$.
Now, from the coefficient of $x_{t+1}^{2}$ in equation \eqref{T8} for $i=k$, we have $c_{k(t+1)}c_{(k-1)(t+1)}=0$. This implies either $c_{k(t+1)}=0$ or $c_{(k-1)(t+1)}=0$, but not both. \par
\textbf{Subcase 1.} Let $c_{k(t+1)}=0$.  In this case, $\rho$ is given $\rho(x_{k-1})=c_{k-1}+c_{(k-1)(t-1)}x_{t-1}+c_{(k-1)(t+1)}x_{t+1}$, $\rho(x_{k})=c_{k}+c_{k(t-1)}x_{t-1}$, $\rho(x_{k+1})=c_{k+1}+c_{(k+1)(t-1)}x_{t-1}+c_{(k+1)t}x_{t} $. Since $\rho(x_{k}) \notin K$, the constant $c_{k(t-1)} \neq 0$ and therefore from the coefficient of $x_{t-1}^{2}$ in equation \eqref{T8} for $i=k$, we have $c_{k(t-1)}(c_{(k-1)(t-1)}-C_{k}c_{(k+1)(t-1)})=0$. This implies $c_{(k-1)(t-1)}-C_{k}c_{(k+1)(t-1)}=0$. Also
$$\rho(x_{k+1}-C_{k+2}x_{k})= (c_{k+1}-C_{k+2}c_{k})+(c_{(k+1)(t-1)}-C_{k+2}c_{k(t-1)})x_{t-1}-c_{(k+1)t}x_{t}.$$
Now, we have the following two possibilities: 
\par
\textbf{(a)} $c_{(k+1)(t-1)}-C_{k+2}c_{k(t-1)} \neq 0$. In this case, the coefficient of $x_{t-1}^{2}$ in equation \eqref{T8}, for $i=k+2$, gives
$c_{(k+2)(t-1)}=c_{(k-1)(t-1)}=0$. Again, from the coefficient of $x_{t-1}^{2}$ in equation \eqref{T8} for $i=k+1$, we get $c_{k(t-1)}c_{(k+1)(t-1)}=0$. But $c_{k(t-1)}\neq 0$, hence $c_{(k+1)(t-1)}=0$  and  
\begin{align}
\rho(x_{k-1})&=c_{k-1}+c_{(k-1)(t+1)}x_{t+1}, \notag\\
\rho(x_{k})&=c_{k}+c_{k(t-1)}x_{t-1}, \notag\\
\rho(x_{k+1})&=c_{k+1}+c_{(k+1)t}x_{t} \notag.
\end{align}
Now, the relation $d\rho(x_{k-1})=\rho d(x_{k-1})$ reduces to
\begin{align}
  c_{(k-1)(t+1)}x_{t+1}(x_{t}-C_{t+1}x_{t-1})=&(c_{k-1}+c_{(k-1)(t+1)}x_{t+1})\big( (c_{k+1}+c_{(k+1)t}x_{t}) \notag \\ & -C_{k-1}(c_{k}+c_{k(t-1)}x_{t-1}) \big). \notag 
\end{align}
By comparing the coefficient of $x_{t}$, $x_{t+1}$, $x_{t}x_{t+1}$ and $x_{t-1}x_{t+1}$, we have $c_{k-1}=0$, $c_{k+1}=C_{k-1}c_{k}$, $c_{(k+1)t}=1$ and $C_{k-1}c_{k(t-1)}=C_{t+1}$, respectively. Similarly, from $d\rho(x_{k})=\rho d(x_{k})$ and $d\rho(x_{k+1})=\rho d(x_{k+1})$, we have $c_{k}=c_{k+1}=0$, $c_{(k-1)(t+1)}=1$, $c_{k(t-1)}=1$, $C_{k}c_{(k+1)t}=C_{t-1}$ and $C_{k+1}c_{(k-1)(t+1)}=C_{t}$. Thus
$ \forall ~~ k,t \in \mathbb{Z}_{3}$, $C_{k-1}=C_{t+1}$, $C_{k}=C_{t-1}$ and $C_{k+1}=C_{t}$. Hence, $\rho$ is of the form
\begin{align*}
\rho(x_{k-1})&=x_{t+1}, \\
\rho(x_{k})&=x_{t-1}, \\
\rho(x_{k+1})&=x_{t}.
\end{align*}
\par
\textbf{(b)} $c_{(k+1)(t-1)}-C_{k+2}c_{k(t-1)}=0$. We have $c_{(k+1)(t-1)} \neq 0$ as $c_{k(t-1)} \neq 0$. From the coefficient of $x_{t-1}^{2}$ in equation \eqref{T8} for $i=k+1$, we have $c_{(k+1)(t-1)}(c_{k(t-1)}-C_{k+1}c_{(k-1)(t-1)})=0$. This implies $c_{k(t-1)}-C_{k+1}c_{(k-1)(t-1)}=0$ as $c_{(k+1)(t-1)} \neq 0 $. Further, $c_{(k-1)(t-1)} \neq 0$ (as $c_{k(t-1)} \neq 0$) and $\rho$ is of the form 
\begin{align}
\rho(x_{k-1})&=c_{k-1}+c_{(k-1)(t-1)}x_{t-1}+c_{(k-1)(t+1)}x_{t+1}, \notag\\
\rho(x_{k})&=c_{k}+c_{k(t-1)}x_{t-1}, \notag\\
\rho(x_{k+1})&=c_{k+1}+c_{(k+1)(t-1)}x_{t-1}+c_{(k+1)t}x_{t} \notag.
\end{align}

Observe that $c_{(k-1)(t+1)} \neq 0$; otherwise $ \text{Image}(\rho) \subseteq K[x_{t-1}, x_t]$.  

Now, the equation $d\rho(x_{k})=\rho d(x_{k})$ reduces to
\begin{small}
\begin{align}
 c_{k(t-1)}x_{t-1}(x_{t+1}-C_{t-1}x_{t})=(c_{k}+c_{k(t-1)}x_{t-1})\big( (c_{k-1}-C_{k}c_{k+1})+c_{(k-1)(t+1)}x_{t+1}-C_{k}c_{(k+1)t}x_{t} \big).\label{T9}
\end{align}    
\end{small}
By comparing the coefficient of $x_{t+1}$, $x_{t-1}$, $x_{t-1}x_{t+1}$, and $x_{t-1}x_{t}$ in equation \eqref{T9}, we have $c_k=0$,
$c_{k-1}=C_{k}c_{k+1}$, $c_{(k-1)(t+1)}=1$, and $C_{k}c_{(k+1)t}=C_{t-1}$, respectively. \newline
Further, the equation $d\rho(x_{k+1})=\rho d(x_{k+1})$ gives
\begin{align}
c_{(k+1)(t-1)}x_{t-1}(x_{t+1}-C_{t-1}x_{t})+ c_{(k+1)t}x_{t}(x_{t-1}-C_{t}x_{t+1})&=(c_{k+1}+c_{(k+1)(t-1)}x_{t-1}+c_{(k+1)t}x_{t}) \notag \\ &\big(-C_{k+1}c_{k-1}-C_{k+1}x_{t+1} \big),\label{T10}   
\end{align}
and by comparing the coefficient of $x_{t+1}$, $x_{t-1}x_{t}$, $x_{t-1}x_{t+1}$ and $x_{t}x_{t+1}$, we have $c_{k+1}=0$, 
 $c_{(k+1)t}=C_{t-1}c_{(k+1)(t-1)}$, $C_{k+1}=-1$, and $C_{t}=C_{k+1}=-1$, respectively. 
 
Finally,  the relation $d\rho(x_{k-1})=\rho d(x_{k-1})$, along with the previous observations and the condition $C_1C_2C_3=1$, gives $c_{k-1}=c_{k}=c_{k+1}=0$, $c_{(k-1)(t-1)}=c_{(k-1)(t+1)}=c_{(k+1)t}=1$ and $c_{k(t-1)}=c_{(k+1)(t-1)}=-1$. Further, it gives $C_{t-1}=C_{k}=-1$ and $C_{t+1}=C_{k-1}=1$. So, from the relation $C_{t+1}=C_{k-1}=1$, we have $t+1=k-1$. Hence, $t+i=k+i+1$ for all $ i \in \mathbb{Z}_{3}$, 
and $\rho$ is of the form
\begin{align}
\rho(x_{k-1})&=x_{k-1}+x_{k}, \notag\\
\rho(x_{k})&=-x_{k}, \notag\\
\rho(x_{k+1})&=-x_{k}+x_{k+1} \notag.
\end{align}\par
\textbf{Subcase 2.} $c_{(k-1)(t+1)}=0$. Using a similar argument as in \textbf{Subcase 1}, we observe that $\rho$ is of the form  $\rho(x_{k-1})=c_{k-1}+c_{(k-1)(t-1)}x_{t-1}$, $\rho(x_{k})=c_{k}+c_{k(t-1)}x_{t-1}+c_{k(t+1)}x_{t+1}$, $\rho(x_{k+1})=c_{k+1}+c_{(k+1)(t-1)}x_{t-1}+c_{(k+1)t}x_{t} $, where $c_{(k-1)(t-1)},c_{k(t+1)},c_{(k+1)t} \in K^{*}$. Now, we have the following two possibilities: \par
\textbf{(a)} $c_{(k+1)(t-1)}-C_{k+2}c_{k(t-1)} \neq 0$. From the coefficient of $x_{t-1}^{2}$ in equation \eqref{T8}, for $i=k+2$, we have $c_{(k+2)(t-1)}(c_{(k+1)(t-1)} - C_{k+2}c_{k(t-1)})=0$.
Then $c_{(k-1)(t-1)}=c_{(k+2)(t-1)}=0$, which is a contradiction as $\rho(x_{k-1}) \notin K$.\par
\textbf{(b)} $c_{(k+1)(t-1)}-C_{k+2}c_{k(t-1)}=0$. If $c_{k(t-1)}=0$ then $c_{(k+1)(t-1)}=0$. In this case, $C_{k-1}C_{t}=1$, $C_{k}C_{t-1}=1$ and $C_{k+1}C_{t+1}=1$. Hence, $\rho$ is of the form
\begin{align*}
\rho(x_{k-1})=-C_{k+1}^{-1}x_{t-1},~~\rho(x_{k})=-C_{k-1}^{-1}x_{t+1}, ~~ \rho(x_{k+1})&=-C_{k}^{-1}x_{t}.
\end{align*}
If $c_{k(t-1)}\neq 0$, then $c_{(k+1)(t-1)}\neq 0$. In this case,  $t+i=k+i$  $ \forall~~ i \in \mathbb{Z}_{3}$, $C_{k-1}=C_{k}=-1$ and $C_{k+1}=1$. Hence, $\rho$ is of the form
\begin{align*}
\rho(x_{k-1})=x_{k-1},~~\rho(x_{k})=-x_{k-1}+x_{k+1}, ~~ \rho(x_{k+1})=-x_{k-1}+x_{k}.
\end{align*}

\textbf{Case 2.} $c_{k(t-1)}-C_{k+1}c_{(k+2)(t-1)} \neq 0$. Then, from equation \eqref{T8} for $i=k+1$, we have $c_{(k+1)(t-1)}=0$. Using the argument as in the proof of \textbf{Case 1}, one can observe that either $c_{k(t-1)}=0$ or $c_{(k-1)(t-1)}=0$, but not both. \par
\textbf{Subcase 1.} If $c_{k(t-1)}=0$, then $\rho$ can have one of the following two forms:
 \begin{enumerate}
     \item  For $  k,t \in \mathbb{Z}_{3}$, $C_{k-1}C_{t}=1$, $C_{k}C_{t-1}=1$, $C_{k+1}C_{t+1}=1$, and 
\begin{align*}
\rho(x_{k-1})=-C_{k+1}^{-1}x_{t-1},~~ \rho(x_{k})=-C_{k-1}^{-1}x_{t+1},~~ \rho(x_{k+1})=-C_{k}^{-1}x_{t}.
\end{align*}
\item For $k \in \mathbb{Z}_{3}$, $C_{k}=C_{k+1}=-1,~\text{and}~C_{k-1}=1$, and 
\begin{align*}
\rho(x_{k-1})=-x_{k}+x_{k+1},~~\rho(x_{k})=x_{k},~~\rho(x_{k+1})=x_{k-1}+x_{k}.
\end{align*}
\end{enumerate}
\par
\textbf{Subcase 2.} If $c_{(k-1)(t-1)}=0$, then $\rho$ can have one of the following two forms:

\begin{enumerate}
     \item For $ k,t \in \mathbb{Z}_{3}$, $C_{k-1}=C_{t+1}$, $C_{k}=C_{t-1}$ and $C_{k+1}=C_{t}$, and 
\begin{align*}
\rho(x_{k-1})=x_{t+1},~~ \rho(x_{k})=x_{t-1},~~ \rho(x_{k+1})=x_{t}.
\end{align*}
\item For $k \in \mathbb{Z}_{3}$, $C_{k}=C_{k-1}=-1,~\text{and}~C_{k+1}=1$, and 
\begin{align*}
\rho(x_{k-1})=-x_{k-1},~~\rho(x_{k})=-x_{k-1}+x_{k}, ~~ \rho(x_{k+1})=x_{k-1}+x_{k+1}.
\end{align*}
\end{enumerate}
We can compute the isotropy group of $d$ by substituting the values of $k,t \in \mathbb{Z}_{3}$. 
Now, the automorphisms that are in the isotropy group of Lotka-Volterra derivation for various values of $C_1, C_2$, and $C_3$ are shown in the forest below. 

\begin{tiny}
\begin{forest}
[$c_{(k-1)t}-C_{k}c_{(k+1)t} \neq 0$  
   [$c_{(k-1)t} {=0} $ 
      [[[$c_{k(t+1)}-C_{k+1}c_{(k+2)(t+1)} \neq 0$
         [[[[[$c_{k(t+1)}{=0}$ 
            [[[[[$c_{(k+1)(t-1)}-C_{k+2}c_{k(t-1)} \neq 0$[$T_{1}$]]] [[$c_{(k+1)(t-1)}-C_{k+2}c_{k(t-1)} {=0}$[$T_{2}$
            ]]]]]]]]]
[[$c_{(k-1)(t+1)} {=0}$[[[$c_{(k+1)(t-1)}-C_{k+2}c_{k(t-1)} \neq 0$[No automorphism]]] [[$c_{(k+1)(t-1)}-C_{k+2}c_{k(t-1)} {=0}$[[[$T_{3}$]][[$T_{4}$]]
            ]]]
            ]
]
 ]
  ]
   ]
    ]
     ]
    [[$c_{k(t-1)}-C_{k+1}c_{(k-1)(t-1)} \neq 0$
   [[[ $c_{k(t-1)}{=0}$ [[[$T_{3}$]][[$T_{4}$]]]]]  [[$c_{(k-1)(t-1)}{=0}$[[[$T_{1}$]][[$T_{2}$]]]]] 
  ]
   ]
     ]
      ]
      ]
       ]
\end{forest}
\end{tiny}
\vspace{0.2cm}
\begin{center}
The trees $T_{1}$, $T_{2}$, $T_{3}$ and $T_{4}$ are given by    
\end{center}
\vspace{0.2cm}
\begin{tiny}
\begin{center}
    \begin{forest}
 [Tree ($T_{1}$)[[[$\{id\}$, edge label={node[midway,left]{$C_{i}$'s are arbitrary}}]]
            [[\{$(x_{2}{,} ~x_{3}{,}~x_{1}) {,}~(x_{3}{,} ~x_{1}{,}~x_{2})$\},edge label={node[midway,right]{$C_{1}${=}$C_{2}${=}$C_{3}$}}]]]]   
\end{forest}
\hspace{0.1cm}
\begin{forest}
 [Tree ($T_{2}$)[[[\{$(x_{1}{+}x_{2}{,} ~{-}x_{2}{,}~{-}x_{2}{+}x_{3})$\},edge label={node[midway, left]{$C_{2}${=}$C_{3}${=-}$1${,}$C_{1}${=}$1$}}]][[[[\{$(x_{1}{-}x_{3}{,} ~x_{2}{+}x_{3}{,}~{-}x_{3})$\},edge label={node[midway, left]{$C_{1}${=}$C_{3}${=-}$1${,}$C_{2}${=}$1$}}]]]][[\{$({-}x_{1}{,} ~{-}x_{1}{+}x_{2}{,}~x_{1}{+}x_{3})$\},edge label={node[midway, right]{$C_{1}${=}$C_{2}${=-}$1${,}$C_{3}${=}$1$}}]]
            ]]   
\end{forest}
\begin{forest}
 [Tree ($T_{3}$)[[[\{$({-}x_{2}{+}x_{3}{,} ~x_{2}{,}~x_{1}{+}x_{2})$\},edge label={node[midway, left]{$C_{2}${=}$C_{3}${=-}$1${,}$C_{1}${=}$1$}}]][[[[\{$(x_{2}{+}x_{3}{,} ~x_{1}{-}x_{3}{,}~x_{3})$\},edge label={node[midway, left]{$C_{1}${=}$C_{3}${=-}$1${,}$C_{2}${=}$1$}}]]]][[\{$(x_{1}{,} ~x_{1}{+}x_{3}{,}~{-}x_{1}{+}x_{2})$\},edge label={node[midway, left]{$C_{1}${=}$C_{2}${=-}$1${,}$C_{3}${=}$1$}}]]
            ]]   
\end{forest}
\begin{forest}
  [Tree ($T_{4}$)[[[\{$({-C_{3}^{-1}}x_{3}{,} ~{-}x_{2}{,}~{-C_{3}}x_{1})$\},edge label={node[midway, left]{$C_{2}C_{3}${=}$1${,}$C_{1}${=}$1$}}]][[[[\{$({-C_{3}^{-1}}x_{2}{,} ~{-C_{3}}x_{1}{,}~{-}x_{3})$\},edge label={node[midway, left]{$C_{1}C_{3}${=}$1${,}$C_{2}${=}$1$}}]]]][[\{$({-}x_{1}{,} ~{-C_{2}}x_{3}{,}~{-C_{2}^{-1}}x_{2})$\},edge label={node[midway, right]{$C_{1}C_{2}${=}$1${,}$C_{3}${=}$1$}}]]]]  
\end{forest}
\end{center}
\end{tiny}
\end{proof} 

\newpage

\begin{corollary}\label{C_{i}=1}
Let $d$ be the derivation as defined in Theorem \ref{Main1}. If $C_{i}=1$ for all $i \in \mathbb{Z}_{3}$, then the isotropy group of $d$ is isomorphic to the dihedral group $D_{6}$ of order $6$.
  \end{corollary}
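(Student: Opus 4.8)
The plan is to read the isotropy group directly off Table \ref{Table2} and then identify its abstract structure. Since $C_1=C_2=C_3=1$ forces $1-C_1C_2C_3=1-1=0$, the derivation $d$ falls under Theorem \ref{Main2}, so $\text{Aut}(K[x_1,x_2,x_3])_d$ is exactly the collection of maps listed in Table \ref{Table2} whose defining conditions are met by this choice of the $C_i$'s. First I would run through the rows and record which conditions hold when $C_1=C_2=C_3=1$: the arbitrary row contributes the identity $(x_1,x_2,x_3)$; the row $C_1=C_2=C_3$ contributes the two cyclic maps $(x_2,x_3,x_1)$ and $(x_3,x_1,x_2)$; and each of the three rows $C_2C_3=1,\,C_1=1$, then $C_1C_2=1,\,C_3=1$, and then $C_1C_3=1,\,C_2=1$ is satisfied, contributing (after substituting $C_i=1$) the maps $(-x_3,-x_2,-x_1)$, $(-x_1,-x_3,-x_2)$, and $(-x_2,-x_1,-x_3)$ respectively. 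Every remaining row carries a condition $C_i=-1$ and hence cannot occur. This leaves exactly six automorphisms.

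Next I would exhibit the dihedral structure by producing generators and relations. Set
$$r=(x_2,x_3,x_1)\qquad\text{and}\qquad s=(-x_3,-x_2,-x_1),$$
that is, $r(x_1)=x_2,\ r(x_2)=x_3,\ r(x_3)=x_1$ and $s(x_1)=-x_3,\ s(x_2)=-x_2,\ s(x_3)=-x_1$. A direct evaluation on $x_1,x_2,x_3$ gives $r^2=(x_3,x_1,x_2)$, $r^3=\mathrm{id}$, and $s^2=\mathrm{id}$, so $r$ has order $3$ and $s$ has order $2$. I would then verify the dihedral relation $srs^{-1}=r^{-1}$ directly on the three variables; equivalently, computing $rs=(-x_1,-x_3,-x_2)$ and $sr=(-x_2,-x_1,-x_3)$ shows $rs\neq sr$, so the group is non-abelian.

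Finally, the six listed automorphisms are precisely $\{\mathrm{id},\,r,\,r^2,\,s,\,rs,\,r^2s\}=\langle r,s\rangle$, and with the relations $r^3=s^2=\mathrm{id}$ and $srs^{-1}=r^{-1}$ this is the standard presentation of the dihedral group of order $6$, whence $\text{Aut}(K[x_1,x_2,x_3])_d\cong D_6$. Alternatively, once non-commutativity is established, the conclusion is immediate because any non-abelian group of order $6$ is isomorphic to $D_6\cong S_3$. I do not anticipate a genuine obstacle: the only care required is the bookkeeping of matching Table \ref{Table2}'s conditions to the value $C_i=1$ and confirming that no row requiring some $C_i=-1$ contributes, followed by the routine check of the group relations.
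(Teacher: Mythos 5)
Your proposal is correct and follows exactly the paper's route: the paper's proof is simply ``follows from Table \ref{Table2},'' and you have carried out precisely that reading of the table (correctly identifying the six surviving rows under $C_1=C_2=C_3=1$) together with the routine verification of the relations $r^3=s^2=\mathrm{id}$, $srs^{-1}=r^{-1}$ that the paper leaves implicit. Nothing is missing; your write-up is just a fuller version of the same argument.
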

\begin{proof}
      The proof follows from Table \ref{Table2}.
\end{proof}

\begin{corollary}
Let $d$ be the derivation as defined in Theorem \ref{Main1}. If $C_{1}=C_{2}=C_{3}$ and $C_{1} \neq \pm 1$, then the isotropy group of $d$ is isomorphic to $\mathbb{Z}_{3}$.
  \end{corollary}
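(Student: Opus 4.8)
The plan is to write $C := C_1 = C_2 = C_3$, observe that $1 - C_1C_2C_3 = 1 - C^3$, and then read off the isotropy group directly from the tables, splitting the argument according to whether $1 - C^3$ vanishes.

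First I would treat the case $1 - C^3 \neq 0$, where Theorem \ref{Main1} applies and the isotropy group is exactly the list in Table \ref{Table1}. I would go through the rows imposing $C_1 = C_2 = C_3 = C$ with $C \neq \pm 1$. The second, third and fourth rows each carry a condition of the form ``a product of two of the $C_i$ equals $1$'' together with ``the remaining $C_i$ equals $-1$''; under $C_1 = C_2 = C_3 = C$ each such pair forces $C^2 = 1$ and $C = -1$, i.e. $C = -1$, which is excluded. Hence only the first row (the identity) and the last row, valid whenever $C_1 = C_2 = C_3$, survive, leaving precisely the three automorphisms $(x_1,x_2,x_3)$, $(x_2,x_3,x_1)$, $(x_3,x_1,x_2)$.

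Next I would treat the case $1 - C^3 = 0$. Since $C \neq 1$, the relation $C^3 = 1$ forces $C$ to be a primitive cube root of unity; in particular $C \neq -1$ holds automatically and $C^2 \neq 1$. Now Theorem \ref{Main2} applies and the isotropy group is the list in Table \ref{Table2}. Checking the rows again: rows $2$--$4$ would require $C = 1$ (their single-variable condition) and are excluded, while rows $6$--$8$ require some $C_i = -1$ and are likewise excluded, since all $C_i$ equal $C$. Only the identity row and the row $C_1 = C_2 = C_3$ remain, yielding the same three automorphisms as before.

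Finally, in either case the isotropy group equals $\{(x_1,x_2,x_3),\,(x_2,x_3,x_1),\,(x_3,x_1,x_2)\}$. Writing $\rho$ for the automorphism with $(\rho(x_1),\rho(x_2),\rho(x_3)) = (x_2,x_3,x_1)$, a direct check gives $\rho^2 = (x_3,x_1,x_2)$ and $\rho^3 = \mathrm{id}$, so the group is cyclic of order $3$ and hence isomorphic to $\mathbb{Z}_3$. I expect the only delicate point to be the case $1 - C^3 = 0$: one must notice that $C \neq \pm 1$ together with $C^3 = 1$ pins $C$ down to a primitive cube root of unity, which is exactly what rules out the extra, non-diagonal rows of Table \ref{Table2}.
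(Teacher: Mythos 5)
Your proposal is correct and follows essentially the same route as the paper, whose entire proof is the citation of Table \ref{Table2} (for $1-C^{3}=0$) and Table \ref{Table3} (equivalently Table \ref{Table1}, for $1-C^{3}\neq 0$); you have simply written out the row-by-row elimination that the paper leaves implicit. The case split on $1-C^{3}$, the exclusion of all rows requiring some $C_{i}=\pm 1$, and the identification of the surviving three automorphisms as a cyclic group of order $3$ all match the intended argument.
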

  \begin{proof}
      The proof follows from Tables \ref{Table2} and \ref{Table3}.
\end{proof}

\begin{corollary}
Let $d$ be the derivation as defined in Theorem \ref{Main1}. If $C_{k-1}=C_{k}=-1$, and $ C_{k+1}=1$ for some $k \in \mathbb{Z}_{3}$, then the isotropy group of $d$ is isomorphic to $\mathbb{Z}_{2}\times \mathbb{Z}_{2}$.  
\end{corollary}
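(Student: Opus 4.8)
The plan is to reduce everything to Theorem~\ref{Main2}. The hypothesis $C_{k-1}=C_k=-1$, $C_{k+1}=1$ gives $C_1C_2C_3=(-1)(-1)(1)=1$, hence $1-C_1C_2C_3=0$, so that Table~\ref{Table2} lists the full isotropy group of $d$. Moreover, by the conjugation invariance of isotropy groups recalled in the introduction, the cyclic shift $\sigma(x_i)=x_{i+1}$ conjugates the Lotka-Volterra derivation with coefficients $(C_1,C_2,C_3)$ to the one with cyclically permuted coefficients; the three derivations arising as $k$ runs over $\mathbb{Z}_3$ are therefore mutually conjugate and have isomorphic isotropy groups. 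It thus suffices to treat one representative, say $k=3$, i.e.\ $(C_1,C_2,C_3)=(1,-1,-1)$.

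For this choice I would determine which rows of Table~\ref{Table2} are compatible with $(1,-1,-1)$. Three rows qualify: the universal row giving the identity $(x_1,x_2,x_3)$; the row $C_2C_3=1,\,C_1=1$ (valid since $C_2C_3=(-1)(-1)=1$), which after substituting $C_3=-1$ becomes $\tau=(x_3,-x_2,x_1)$; and the row $C_2=C_3=-1,\,C_1=1$, which contributes $\rho_1=(x_1+x_2,-x_2,-x_2+x_3)$ and $\rho_2=(-x_2+x_3,x_2,x_1+x_2)$. By Theorem~\ref{Main2} these four automorphisms are precisely the elements of $\mathrm{Aut}(K[x_1,x_2,x_3])_d$.

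It then remains to identify the group law on $\{\mathrm{id},\rho_1,\rho_2,\tau\}$. I would verify by direct substitution that $\rho_1^2=\rho_2^2=\tau^2=\mathrm{id}$, so that every non-identity element is an involution, and that $\rho_1\rho_2=\rho_2\rho_1=\tau$, so that the group is abelian and the ``swap'' $\tau$ is exactly the composite of the two ``shears''. A group of order four all of whose non-identity elements have order two is the Klein four-group, whence $\mathrm{Aut}(K[x_1,x_2,x_3])_d\cong\mathbb{Z}_2\times\mathbb{Z}_2$.

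No step is genuinely hard; the content is entirely bookkeeping once Theorem~\ref{Main2} is in hand. The one place that needs care is recognising that several rows of Table~\ref{Table2} apply simultaneously to $(1,-1,-1)$---in particular that $\tau$, read off from the $C_2C_3=1$ row, coincides with the composite $\rho_1\rho_2$---since missing this would suggest a group of order three rather than the correct Klein four-group.
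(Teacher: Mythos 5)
Your proposal is correct and takes essentially the same route as the paper: the paper's proof is simply ``follows from Table \ref{Table2},'' and your argument fills in precisely the bookkeeping this entails --- identifying the four applicable rows (the identity, the $C_2C_3=1,\,C_1=1$ row, and the two shears from the $C_2=C_3=-1,\,C_1=1$ row) and verifying the Klein four-group law $\rho_1^2=\rho_2^2=\tau^2=\mathrm{id}$, $\rho_1\rho_2=\rho_2\rho_1=\tau$. Your cyclic-shift conjugation reduction to the single case $(C_1,C_2,C_3)=(1,-1,-1)$ is sound but not strictly necessary, since Table \ref{Table2} already lists the relevant rows for all three values of $k$.
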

 \begin{proof}
The proof follows from Table \ref{Table2}.
\end{proof}

\begin{corollary}
Let $d$ be the derivation as defined in Theorem \ref{Main1}. If $C_{k-1}C_{k}=1$, $ C_{k+1}=\pm 1$, and $C_{k} \neq \pm 1$ for some $k \in \mathbb{Z}_{3}$, then the isotropy group of $d$ is isomorphic to $\mathbb{Z}_{2}$.
\end{corollary}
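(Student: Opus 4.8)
The plan is to reduce the statement to the explicit classifications already obtained in Theorems \ref{Main1} and \ref{Main2}, and then to read off the group by identifying precisely which rows of Table \ref{Table1} and Table \ref{Table2} are compatible with the hypotheses. The first step is to evaluate the product $C_1C_2C_3$. Since the indices are taken in $\mathbb{Z}_3$, we have $C_1C_2C_3 = C_{k-1}C_kC_{k+1} = (C_{k-1}C_k)C_{k+1} = C_{k+1}$, using $C_{k-1}C_k = 1$. As $C_{k+1} = \pm 1$, this shows that $1 - C_1C_2C_3$ vanishes exactly when $C_{k+1} = 1$, which dictates whether Table \ref{Table1} or Table \ref{Table2} applies.

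I would then split into two cases. If $C_{k+1} = -1$, then $1 - C_1C_2C_3 = 2 \neq 0$, so the isotropy group is given by Table \ref{Table1}. The hypotheses $C_{k-1}C_k = 1$ and $C_{k+1} = -1$ match exactly one of the three rows of the form ``$C_iC_j = 1,\ C_\ell = -1$'' (the precise one depending on $k$), yielding a single non-identity automorphism $\rho$ with $\rho(x_{k-1}) = x_{k-1}$, $\rho(x_k) = -C_k x_{k+1}$ and $\rho(x_{k+1}) = -C_k^{-1}x_k$. The only other non-identity row of Table \ref{Table1}, namely $C_1 = C_2 = C_3$, is incompatible with the hypotheses since it would force $C_{k+1} = C_k$, contradicting $C_k \neq -1 = C_{k+1}$.

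If instead $C_{k+1} = 1$, then $1 - C_1C_2C_3 = 0$ and I would use Table \ref{Table2}. Again the pair $C_{k-1}C_k = 1$, $C_{k+1} = 1$ singles out exactly one of the rows ``$C_iC_j = 1,\ C_\ell = 1$'', producing a single non-identity automorphism $\rho$ with $\rho(x_{k-1}) = -x_{k-1}$, $\rho(x_k) = -C_kx_{k+1}$ and $\rho(x_{k+1}) = -C_k^{-1}x_k$. Here the condition $C_k \neq \pm 1$ does the real work: it forces $C_{k-1} = C_k^{-1} \neq \pm 1$ as well, so none of $C_1, C_2, C_3$ equals $-1$, and hence every sign-pattern row of Table \ref{Table2} (each of which requires two of the constants to equal $-1$) is excluded; the row $C_1 = C_2 = C_3$ is again ruled out because $C_{k+1} = 1 \neq C_k$.

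It remains to confirm that in both cases the unique non-identity element $\rho$ is an involution. A direct substitution in the formulas above gives $\rho^2(x_i) = x_i$ for every $i \in \mathbb{Z}_3$; for instance $\rho^2(x_k) = \rho(-C_kx_{k+1}) = -C_k(-C_k^{-1}x_k) = x_k$, and similarly for the other two variables. Therefore $\mathrm{Aut}(K[x_1,x_2,x_3])_d = \{\mathrm{id}, \rho\} \cong \mathbb{Z}_2$. I expect the only delicate point to be the bookkeeping in the exclusion step: one must use $C_k \neq \pm 1$ to guarantee that no second automorphism appears, and in the case $C_{k+1}=1$ this requires simultaneously ruling out the three sign-pattern rows and the diagonal row $C_1=C_2=C_3$ of Table \ref{Table2}.
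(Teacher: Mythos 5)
Your proposal is correct and takes essentially the same route as the paper, whose entire proof is the citation ``The proof follows from Tables \ref{Table2} and \ref{Table3}'': you reduce $C_1C_2C_3$ to $C_{k+1}$ to decide which of Theorems \ref{Main1} and \ref{Main2} (hence which table) applies, identify the unique compatible non-identity row, and check it is an involution. The only difference is that you make explicit the exclusion bookkeeping (using $C_k\neq\pm1$, hence $C_{k-1}=C_k^{-1}\neq\pm1$, to rule out the remaining rows) that the paper leaves implicit.
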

 \begin{proof}
The proof follows from Tables \ref{Table2} and \ref{Table3}.
\end{proof}

\begin{remark}
In Theorems \ref{Main1} and  \ref{Main2}, we have shown that the isotropy group of Lotka-Volterra derivations over 
$K[x_1, x_2, x_3]$ is finite. 
Hence, it is natural to ask whether or not the isotropy group of Lotka-Volterra derivation is finite if there are more than three variables. In the next section, we show that the isotropy group of Lotka-Volterra derivations over 
$K[x_1, x_2, x_3, x_4]$ is not finite in general. 
\end{remark}

\section{\textbf{ Isotropy group of Lotka-Volterra derivation of \texorpdfstring{$K[x_1, x_2, x_3, x_4]$}{} }}

A Lotka-Volterra derivation of $K[x_1, x_2, x_3, x_4]$ is of the form 
\begin{equation}\label{LVD4}
d=x_{1}(x_{4}-C_{1}x_{2})\frac{\partial }{\partial x_{1}}+x_{2}(x_{1}-C_{2}x_{3})\frac{\partial }{\partial x_{2}}+x_{3}(x_{2}-C_{3}x_{4})\frac{\partial }{\partial x_{3}}+x_{4}(x_{3}-C_{4}x_{1})\frac{\partial }{\partial x_{4}}.
\end{equation}
Throughout this section, we study the isotropy group of $d$ defined in equation \eqref{LVD4}. We show that for certain values of $C_{i}$'s, the isotropy group of $d$ is not finite. Later, we also give a complete classification of the isotropy group of $d$ based on the conditions mentioned below.
Consider the sentences:
\begin{enumerate}
    \item[$s_{1}:$] All $C_{i}$'s are $-1$.
    \item[$s_{2}:$] Exactly three $C_{i}$'s are $-1$.
    \item[$s_{3}:$] Exactly two $C_{i}$'s are $-1$.
    \begin{itemize}
        \item[(a)] Consecutive $C_{i}$'s are $-1$.
        \item[(b)] Alternative $C_{i}$'s are $-1$($C_{i}=C_{i+2}=-1$).
        \end{itemize}
    \item[$s_{4}:$] Exactly one $C_{i}$ is $-1$.
    \item[$s_{5}:$] No $C_{i}$ equals $-1$.
 \end{enumerate}
 For $s_1$ and $s_3(b)$ isotropy group of $d$ is not finite, whereas for $s_2, s_3(a), s_4$ and $s_5$ it is finite. 

\begin{proposition}\label{linear4v}
Let $d$ be the derivation as defined in equation \eqref{LVD4} and $\rho \in  \text{Aut}( K[ x_{1},x_{2},x_{3},x_{4}])_{d}$ be such that 
$$\rho(x_{k})=c_{k}+\sum_{j=1}^4 c_{kj}x_{j},\hspace{1cm} \forall ~~ k \in \mathbb{Z}_{4}.$$ 
Then there exist a $t \in \mathbb{Z}_{4}$ such that $c_{kt}=0$ and  $\rho$ is of the form 
\begin{eqnarray}
\rho(x_{k+j})=c_{k+j}+c_{(k+j)(t+j-1)}x_{t+j-1}+c_{(k+j)(t+j+1)}x_{t+j+1} \hspace{1cm} \forall~~ j \in \mathbb{Z}_{4}. \notag
\end{eqnarray}
\end{proposition}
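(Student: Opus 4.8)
The plan is to compare the degree‑$2$ homogeneous parts of the relation $d\rho(x_k)=\rho d(x_k)$ and to exploit the fact that, for $n=4$, the $4$‑cycle governing $d$ is bipartite. Write $u_k=\sum_{j=1}^{4}c_{kj}x_j$ for the linear part of $\rho(x_k)$ and $v_k=u_{k-1}-C_ku_{k+1}$ for the linear part of $\rho(x_{k-1}-C_kx_{k+1})$; by Lemma~\ref{totaldegree} we have $v_k\neq 0$ for every $k\in\mathbb{Z}_4$. Since $d\rho(x_k)=d(u_k)$ is homogeneous of degree $2$, while the degree‑$2$ part of $\rho(x_k)\,\rho(x_{k-1}-C_kx_{k+1})=(c_k+u_k)\big((c_{k-1}-C_kc_{k+1})+v_k\big)$ is $u_kv_k$, comparing degree‑$2$ parts yields $d(u_k)=u_kv_k$ for all $k\in\mathbb{Z}_4$.

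Next I would read off the structure of the left side. Expanding, $d(u_k)=\sum_j c_{kj}\big(x_jx_{j-1}-C_jx_jx_{j+1}\big)$ involves only the edge monomials $x_1x_2,x_2x_3,x_3x_4,x_4x_1$; in particular it contains no square $x_j^2$ and neither of the diagonal monomials $x_1x_3,x_2x_4$. This is exactly where $n=4$ enters: the underlying cycle is bipartite with index classes $\{1,3\}$ and $\{2,4\}$, and every edge joins the two classes. Decompose $u_k=u_k'+u_k''$ and $v_k=v_k'+v_k''$, where $u_k',v_k'\in K[x_1,x_3]$ collect the $x_1,x_3$ terms and $u_k'',v_k''\in K[x_2,x_4]$ the $x_2,x_4$ terms. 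Then $u_kv_k=u_k'v_k'+(u_k'v_k''+u_k''v_k')+u_k''v_k''$, and the two same‑class products $u_k'v_k'\in K[x_1,x_3]$ and $u_k''v_k''\in K[x_2,x_4]$ supply precisely the square and diagonal monomials of $u_kv_k$, whose monomial supports are disjoint. As all of these are absent from $d(u_k)$, I conclude $u_k'v_k'=0$ and $u_k''v_k''=0$. Because $K[x_1,x_3]$ and $K[x_2,x_4]$ are integral domains, for each $k$ one has ($u_k'=0$ or $v_k'=0$) and ($u_k''=0$ or $v_k''=0$). Since $v_k\neq 0$, at least one of $v_k',v_k''$ is nonzero, and this forces the corresponding part of $u_k$ to vanish; hence $u_k$ is supported in a single class. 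Equivalently, there is a $t$ with $c_{kt}=c_{k(t+2)}=0$, which is the $j=0$ instance of the claimed form.

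It then remains to show the classes alternate with the index. As $\rho$ is an automorphism, the matrix $(c_{ij})$ is invertible, so every $u_k\neq 0$, and since the rows supported in one class lie in a fixed $2$‑dimensional coordinate subspace, each class supports exactly two of the four rows. Suppose two cyclically adjacent rows, say $u_k$ and $u_{k+1}$, were supported in the same class $\{1,3\}$; then the remaining rows $u_{k+2},u_{k+3}$ are supported in $\{2,4\}$, so $u_{k+2}'=0$ and therefore $v_{k+1}'=u_k'-C_{k+1}u_{k+2}'=u_k\neq 0$. Applying the dichotomy of the previous paragraph to the index $k+1$ forces $u_{k+1}'=0$, contradicting $u_{k+1}\neq 0$. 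Hence no two cyclically adjacent rows share a class, so the supporting class depends only on the parity of the index. Choosing $t$ so that $\{t-1,t+1\}$ is the class supporting $u_k$, this alternation says precisely that $\rho(x_{k+j})$ is supported on $\{t+j-1,t+j+1\}$ for all $j\in\mathbb{Z}_4$, which is the asserted form.

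The step deserving the most care is the passage to a single class. Vanishing of the squares and diagonals alone yields only the $v_k$‑dependent dichotomy; were $v_k$ allowed to vanish, a given $u_k$ could meet both classes, and indeed such degenerate patterns become numerically consistent with the degree‑$2$ equations exactly when $C_1C_3=C_2C_4=1$. It is the non‑vanishing $v_k\neq 0$ furnished by Lemma~\ref{totaldegree}, together with the invertibility of $(c_{ij})$ invoked in the alternation step, that rules these out. Once the bipartite splitting $u_k=u_k'+u_k''$ is in place, no lengthy computation is needed.
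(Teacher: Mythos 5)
Your proof is correct, and it follows a genuinely different route from the paper's. Both arguments extract the same quadratic information from $d\rho(x_k)=\rho d(x_k)$ --- vanishing of the coefficients of the squares $x_j^2$ and of the diagonals $x_jx_{j+2}$, which the paper records as the coefficient relations \eqref{H5} and \eqref{H6} --- but they exploit it differently. The paper argues coefficient by coefficient: it picks $t$ with $c_{(k-1)t}-C_kc_{(k+1)t}\neq 0$, deduces $c_{kt}=c_{k(t+2)}=0$, and then propagates support constraints around the cycle through a case analysis with several without-loss-of-generality choices, including a separate recursive derivation (equation \eqref{H7}) needed to handle the possibility that some $C_i=0$. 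You instead package the whole degree-$2$ comparison as the single identity $d(u_k)=u_kv_k$, split it along the bipartition $\{1,3\}\cup\{2,4\}$ of the $4$-cycle into $u_k'v_k'=0$ in $K[x_1,x_3]$ and $u_k''v_k''=0$ in $K[x_2,x_4]$, and use that these subrings are integral domains together with $v_k\neq 0$ (Lemma \ref{totaldegree}) to confine each row $u_k$ to a single class; invertibility of $(c_{ij})$ (no zero row, at most two rows in each $2$-dimensional coordinate subspace) then forces the classes to alternate around the cycle. What your approach buys is uniformity --- no case split on whether some $C_i$ vanishes and no coefficient bookkeeping --- plus a conceptual explanation of why $n=4$ is the special case (bipartiteness of the cycle). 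One small point to note: the paper's $t$ is chosen so that $c_{(k-1)t}-C_kc_{(k+1)t}\neq 0$, a property that Theorem \ref{4vT1} later recalls and uses; your $t$ does not automatically carry it, but since $v_k\neq 0$ is supported on $\{t,t+2\}$ and replacing $t$ by $t+2$ leaves the asserted form of $\rho$ unchanged, that normalization can be secured with one extra sentence.
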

\begin{proof}
We have,  
$$\rho(x_{i})=c_{i}+\sum_{j=1}^4 c_{ij}x_{j},$$
where $c_{i},c_{ij} \in K$,  for all $ i, j \in \mathbb{Z}_{4}$.\\
Then, for $ i \in \mathbb{Z}_{4}$, the relation $d\rho\left ( x_{i} \right ) =\rho d\left ( x_{i} \right )$  gives,
\begin{equation}
\begin{aligned}
\sum_{j=1}^4 c_{ij}x_{j}(x_{j-1}-C_{j}x_{j+1})=
 (c_{i}+ \sum_{j=1}^4 c_{ij}x_{j}) 
&\big((c_{i-1}+\sum_{j=1}^4 c_{(i-1)j}x_{j})-C_{i}(c_{i+1}+\sum_{j=1}^4 c_{(i+1)j}x_{j})\big).
\end{aligned}\label{H1}
\end{equation}

By comparing the coefficient of $x_{j}^{2}$ for all $j \in \mathbb{Z}_{4}$ in equation \eqref{H1}, we have
\begin{equation}\label{H5}
\left.\begin{matrix}
 x_{1}^{2} & x_{2}^{2} \\ 
c_{i1}(c_{(i-1)1}-C_{i}c_{(i+1)1})=0, & c_{i2}(c_{(i-1)2}-C_{i}c_{(i+1)2})=0, \\
& \\
 x_{3}^{2}& x_{4}^{2}\\  c_{i3}(c_{(i-1)3}-C_{i}c_{(i+1)3})=0, & c_{i4}(c_{(i-1)4}-C_{i}c_{(i+1)4})=0.
\end{matrix}\right\}
\end{equation}
\vspace{0.3cm}
Further,  by comparing the coefficient of $x_{j}x_{j+2}$ in equation \eqref{H1}, we have
\begin{eqnarray}
    c_{ij}(c_{(i-1)(j+2)}-C_{i}c_{(i+1)(j+2)})+c_{i(j+2)}(c_{(i-1)j}-C_{i}c_{(i+1)j})=0. \hspace{1cm} \label{H6}
\end{eqnarray}
If $C_{i}=0$ for some $i \in \mathbb{Z}_{4}$, say $i=p$, then from equation \eqref{H6} for $i=p$, we have 
\begin{eqnarray}
    c_{pj}c_{(p-1)(j+2)}+c_{p(j+2)}c_{(p-1)j}=0. \label{2H6} 
\end{eqnarray}
Further, from equation \eqref{H6} for $i=p-1$, we have
\begin{align}
  &c_{(p-1)j}(c_{(p-2)(j+2)}-C_{p-1}c_{p(j+2)})+c_{(p-1)(j+2)}(c_{(p-2)j}-C_{p-1}c_{pj})=0. \notag
 \end{align}
 The above equation can be rewritten as
\begin{align}
  c_{(p-1)j}c_{(p-2)(j+2)}+c_{(p-1)(j+2)}c_{(p-2)j}-C_{p-1}(c_{p(j+2)}c_{(p-1)j}+c_{pj}c_{(p-1)(j+2)})=0, \notag
\end{align}
and by substituting equation \eqref{2H6} in it, we have
\begin{eqnarray}
     c_{(p-1)j}c_{(p-2)(j+2)}+c_{(p-1)(j+2)}c_{(p-2)j}=0.\notag 
\end{eqnarray}
Recursively, we derive 
\begin{eqnarray}
    c_{ij}c_{(i-1)(j+2)}+c_{i(j+2)}c_{(i-1)j}=0 \hspace{1cm} \forall~~ i \in \mathbb{Z}_{4} . \label{H7}
\end{eqnarray}
Also, we know that
$\rho(x_{i-1}-C_{i}x_{i+1}) \notin K$ for all $ i \in \mathbb{Z}_{4}$. 
In particular for $i =k$, 
$\rho(x_{k-1}-C_{k}x_{k+1}) 
=(c_{k-1}-C_{k}c_{k+1})+\sum_{j=1}^{4} (c_{(k-1)j}-C_{k}c_{(k+1)j})x_{j} \notin K$. Hence, there exists a $ t \in \mathbb{Z}_{4}$ such that $c_{(k-1)t}-C_{k}c_{(k+1)t} \neq 0$. Then equation \eqref{H5} for $i=k$, gives $c_{kt}(c_{(k-1)t}-C_{k}c_{(k+1)t} )=0$, which implies, 
$c_{kt}=0$. 
Now, from equation \eqref{H6} for $i=k $ and $j=t$, we have 
\begin{eqnarray}
    c_{k(t+2)}(c_{(k-1)t}-C_{k}c_{(k+1)t})=0. \notag
\end{eqnarray}
This implies $c_{k(t+2)}=0$ and $$\rho(x_{k})=c_{k}+c_{k(t-1)}x_{t-1}+c_{k(t+1)}x_{t+1}.$$
Further, by using the coefficient of $x_{t}^{2}$ in equation \eqref{H5} for $i=k+2$ and $i=k+3(= k-1)$, we get $c_{(k+1)t}c_{(k+2)t}=0$ and $c_{(k+2)t}c_{(k-1)t}=0 $, respectively. 

Since $c_{(k-1)t}-C_{k}c_{(k+1)t} \neq 0$, either $c_{(k-1)t} $ or $c_{(k+1)t} $ is not equal to $0$. 
Without loss of generality, we assume that $c_{(k+1)t} \neq 0$. This implies $c_{(k+2)t}=0$. \newline 
Now, if $C_{k+1} \neq 0$, from equation \eqref{H6} for $i=k+1$ and $j = t $, we have
\begin{eqnarray}
    c_{(k+1)t}(c_{k(t+2)}-C_{k+1}c_{(k+2)(t+2)})+c_{(k+1)(t+2)}(c_{kt}-C_{k+1}c_{(k+2)t})=0, \notag
\end{eqnarray}
which gives $c_{(k+2)(t+2)}=0$  as $c_{kt}=c_{(k+2)t}=c_{k(t+2)}=0$. 

If $C_{k+1} = 0 $, then from equation \eqref{H7}, for $i=k+2$ and $j= t$, we have 
\begin{eqnarray}
    c_{(k+2)t}c_{(k+1)(t+2)}+c_{(k+2)(t+2)}c_{(k+1)t}=0, \notag
\end{eqnarray}
which also shows $c_{(k+2)(t+2)}=0$ and $$\rho(x_{k+2})=c_{k+2}+c_{(k+2)(t-1)}x_{t-1}+c_{(k+2)(t+1)}x_{t+1}.$$
In addition,
\begin{small}
\begin{eqnarray}
\rho(x_{k}-C_{k+1}x_{k+2})= (c_{k}-C_{k+1}c_{k+2})+(c_{k(t-1)}-C_{k+1}c_{(k+2)(t-1)})x_{t-1}+(c_{k(t+1)}-C_{k+1}c_{(k+2)(t+1)})x_{t+1}. \notag
\end{eqnarray}    
\end{small}
As $\rho(x_{k}-C_{k+1}x_{k+2}) \notin K$, either $c_{k(t-1)}-C_{k+1}c_{(k+2)(t-1)} \neq 0$ or $c_{k(t+1)}-C_{k+1}c_{(k+2)(t+1)} \neq 0$.\par
Suppose $c_{k(t+1)}-C_{k+1}c_{(k+2)(t+1)} \neq 0$, then from the coefficient of $x_{t+1}^{2} $ in equation \eqref{H5} for $i=k+1$, we have $c_{(k+1)(t+1)}=0$. Now, from equation \eqref{H6} for $i=k+1$ and $j =t-1$, we have
\begin{eqnarray}
   c_{(k+1)(t-1)}(c_{k(t+1)}-C_{k+1}c_{(k+2)(t+1)})=0. \notag
\end{eqnarray}
The above equation gives $c_{(k+1)(t-1)}=0$ and 
$$\rho(x_{k+1})=c_{k+1}+c_{(k+1)t}x_{t}+c_{(k+2)(t+2)}x_{t+2}.$$
Similarly, we obtain 
$$\rho(x_{k-1})=c_{k-1}+c_{(k-1)t}x_{t}+c_{(k-1)(t+2)}x_{t+2}.$$
When $c_{k(t-1)}-C_{k+1}c_{(k+2)(t-1)} \neq 0$, using a similar argument, one can observe that $\rho(x_{k-1})$ and $\rho(x_{k+1})$ have the same expression as obtained above. \newline
 So, $\rho$ is of the form 
\begin{eqnarray}
\rho(x_{k+j})=c_{k+j}+c_{(k+j)(t+j-1)}x_{t+j-1}+c_{(k+j)(t+j+1)}x_{t+j+1} \hspace{1cm} \forall~~ j \in \mathbb{Z}_{4}. \notag
\end{eqnarray} 
\end{proof}

\begin{theorem}\label{4vT1}
Let $d$ be the derivation as defined in equation \eqref{LVD4}. If any one of the  $s_{3}(a)$, $s_{4}$, or $s_{5}$ holds, then the isotropy group of $d$ is finite.
\end{theorem}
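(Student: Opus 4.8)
The plan is to show that every $\rho \in \mathrm{Aut}(K[x_1,x_2,x_3,x_4])_d$ is forced to be a \emph{monomial} linear automorphism, $\rho(x_i) = \lambda_i x_{\tau(i)}$, and that only finitely many of these commute with $d$. First I would observe that each of $s_{3}(a)$, $s_{4}$, $s_{5}$ forbids the pattern $C_j = C_{j+2} = -1$: under $s_5$ no $C_i$ equals $-1$, under $s_4$ at most one does, and under $s_{3}(a)$ the two indices carrying $-1$ are consecutive and hence never of the form $\{j, j+2\}$. By Remark \ref{remarkfor4variable}, for every $i$ at least one of $C_i, C_{i+2}$ differs from $-1$, so $\rho(x_i)$ has total degree $1$ for all $i$. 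This lets me apply Proposition \ref{linear4v}: writing $\ell = k+j$, $s = t-k$, and naming the two surviving coefficients $a_\ell, b_\ell$, each image becomes a binomial $\rho(x_\ell) = c_\ell + a_\ell x_{s+\ell-1} + b_\ell x_{s+\ell+1}$ supported on the two variables neighbouring $x_{s+\ell}$ in the cyclic order.

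The heart of the proof is the coefficient comparison in $d\rho(x_i) = \rho d(x_i)$. Setting $u = s+i$ and using $x_{u-2} = x_{u+2}$ in $\mathbb{Z}_4$, I would write the linear factor $\rho(x_{i-1}) - C_i\rho(x_{i+1}) = \alpha + \beta x_u + \gamma x_{u+2}$; the left-hand side $a_i\, d(x_{u-1}) + b_i\, d(x_{u+1})$ is purely quadratic. Vanishing of the constant and linear parts of the right-hand side forces $\alpha^{(i)} = c_{i-1} - C_i c_{i+1} = 0$ and $c_i\beta = c_i\gamma = 0$, while matching the four quadratic monomials $x_{u-1}x_u$, $x_{u-1}x_{u+2}$, $x_u x_{u+1}$, $x_{u+1}x_{u+2}$ gives $\beta = -C_{u-1},\ \gamma = 1$ whenever $a_i \neq 0$, and $\beta = 1,\ \gamma = -C_{u+1}$ whenever $b_i \neq 0$. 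From $\gamma = 1$ (resp.\ $\beta = 1$) together with $c_i\gamma = c_i\beta = 0$ I get $c_i = 0$ for every $i$, so $\rho$ has no constant term. If both $a_i$ and $b_i$ were nonzero the two sets of relations would give $C_{u-1} = C_{u+1} = -1$, i.e.\ $C_{u-1} = C_{(u-1)+2} = -1$, precisely the pattern excluded above; hence for each $i$ exactly one of $a_i, b_i$ survives and $\rho(x_i) = \lambda_i x_{\tau(i)}$ for some $\lambda_i \in K^{*}$ and some permutation $\tau \in S_4$.

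It then remains to bound the number of such monomial maps. Substituting $\rho(x_i) = \lambda_i x_{\tau(i)}$ and cancelling $\lambda_i x_{\tau(i)}$ yields, for each $i$, the identity $x_{\tau(i)-1} - C_{\tau(i)} x_{\tau(i)+1} = \lambda_{i-1} x_{\tau(i-1)} - C_i \lambda_{i+1} x_{\tau(i+1)}$. Since $\lambda_{i-1} \neq 0$, the variable $x_{\tau(i-1)}$ must occur on the left, forcing $\tau(i-1) \in \{\tau(i)-1, \tau(i)+1\}$ for all $i$; thus $\tau$ preserves adjacency in the $4$-cycle and lies in $\mathrm{Aut}(C_4) \cong D_4$, a group of order $8$. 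The same coefficient match pins $\lambda_{i-1}$ to the explicit value $1$ (if $\tau(i-1) = \tau(i)-1$) or $-C_{\tau(i)}$ (if $\tau(i-1) = \tau(i)+1$), so once $\tau$ is fixed no scalar is free. Hence there are at most $8$ candidates for $\rho$, and $\mathrm{Aut}(K[x_1,x_2,x_3,x_4])_d$ is finite.

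The main obstacle is the quadratic bookkeeping in the middle step: verifying cleanly that $C_{u-1} = C_{u+1} = -1$ is the \emph{only} way both $a_i$ and $b_i$ can be nonzero, since this is exactly where the hypotheses $s_{3}(a)$, $s_{4}$, $s_{5}$ are consumed. Indices with $C_i = 0$ (where one quadratic monomial drops out of the identity) need a little extra attention, but they do not affect the conclusion, because the pinning of $\lambda_{i-1}$ relies only on the always-present term $\lambda_{i-1} x_{\tau(i-1)}$ on the right-hand side.
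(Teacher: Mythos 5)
Your proof is correct, and although it shares the paper's scaffolding --- Remark~\ref{remarkfor4variable} together with Lemma~\ref{totaldegree} to get total degree $1$, then Proposition~\ref{linear4v} for the two-variable support --- the decisive coefficient analysis is organized genuinely differently. The paper fixes one nonzero coefficient (without loss of generality $c_{(k+1)t}\neq 0$), writes out the sixteen relations \eqref{H14}--\eqref{H29}, and chases cases and subcases; the exclusion of alternate $-1$'s enters there through pairs of relations such as \eqref{H35}, \eqref{H39} and \eqref{H38}, \eqref{H42}. You instead treat every index $i$ symmetrically: the constant and linear parts of $d\rho(x_i)=\rho d(x_i)$ kill $c_i$, the quadratic part yields the dichotomy $(\beta,\gamma)=(-C_{u-1},1)$ when $a_i\neq 0$ and $(\beta,\gamma)=(1,-C_{u+1})$ when $b_i\neq 0$, and coexistence of both coefficients forces exactly the excluded pattern $C_{u-1}=C_{u+1}=-1$; hence $\rho$ is monomial, with the hypotheses $s_3(a)$, $s_4$, $s_5$ consumed in one visible place. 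Your endgame also differs: rather than deriving the paper's two explicit families $\rho(x_{k+j})=x_{t+j-1}$ with $C_{k+j}=C_{t+j-1}$ and $\rho(x_{k+j})=-C_{t-j}x_{t-j+1}$ with $C_{k+j-1}C_{t-j}=1$, you observe that the induced permutation $\tau$ preserves adjacency of the $4$-cycle (here you are implicitly using $\tau(i-1)\neq\tau(i+1)$ so that no cancellation can occur on the right-hand side --- worth stating), hence lies in a group of order $8$, and that each scalar $\lambda_{i-1}$ is pinned to $1$ or $-C_{\tau(i)}$, giving at once $\left|\text{Aut}(K[X])_d\right|\leq 8$. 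What your route buys is brevity, a uniform argument free of WLOG case-chasing, and an explicit numerical bound; what the paper's route buys is the concrete list of automorphisms together with the compatibility conditions on the $C_i$'s, which it reuses later (e.g., Corollary~\ref{4C_{i}=1}) to identify the group as $D_8$ when all $C_i=1$. Your pinning argument recovers that list too, but the consistency relations among the $C_i$'s would still need to be spelled out to obtain those corollaries.
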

\begin{proof}
Let $\rho \in  \text{Aut}( K[ x_{1},x_{2},x_{3},x_{4}])_{d}$. By Remark \ref{remarkfor4variable} and Lemma \ref{totaldegree}, $\rho(x_{i})$'s are polynomials in $K[x_{1},x_{2},x_{3},x_{4}]$ of total degree $1$. Hence, by Proposition \ref{linear4v}, $\rho$ is of the form
 \begin{eqnarray}
\rho(x_{k+j})=c_{k+j}+c_{(k+j)(t+j-1)}x_{t+j-1}+c_{(k+j)(t+j+1)}x_{t+j+1} \hspace{1cm} \forall~~ j \in \mathbb{Z}_{4}. \notag \label{H8}
\end{eqnarray} 

Now, $d \rho(x_{k+j})= \rho d(x_{k+j})  $ gives 
\begin{eqnarray}\label{H9}
&c_{(k+j)(t+j-1)}x_{t+j-1}(x_{t+j+2}-C_{t+j-1}x_{t+j})+c_{(k+j)(t+j+1)}x_{t+j+1}(x_{t+j}-C_{t+j+1}x_{t+j+2}) \notag \\ & =(c_{k+j}+c_{(k+j)(t+j-1)}x_{t+j-1}+c_{(k+j)(t+j+1)}x_{t+j+1}) \big( (c_{k+j-1} - C_{k+j}c_{k+j+1})+(c_{(k+j-1)(t+j)} \notag \\&- C_{k+j}c_{(k+j+1)(t+j)})x_{t+j}+(c_{(k+j-1)(t+j+2)} - C_{k+j}c_{(k+j+1)(t+j+2)})x_{t+j+2} \big).  
\end{eqnarray}
Since $\rho(x_{k+j-1}-C_{k+j}x_{k+j+1}) \notin K$, this implies either $c_{(k+j-1)(t+j)}- C_{k+j}c_{(k+j+1)(t+j)} \neq 0$ or $c_{(k+j-1)(t+j+2)} - C_{k+j}c_{(k+j+1)(t+j+2)} \neq 0$. By comparing the coefficient of $x_{t+j}$, $x_{t+j+2}$ in equation \eqref{H9}, we have $c_{k+j}(c_{(k+j-1)(t+j)}- C_{k+j}c_{(k+j+1)(t+j)})=0$ and $c_{k+j}(c_{(k+j-1)(t+j+2)} - C_{k+j}c_{(k+j+1)(t+j+2)})=0$. Hence, $c_{k+j}=0$ for all $ j \in \mathbb{Z}_{4}$. 

By comparing the coefficients of  $x_ix_{i+1}$ form  in equation \eqref{H9}, we have
\begin{align}
& x_{t+j}x_{t+j+1}   &&~~ c_{(k+j)(t+j+1)}(c_{(k+j-1)(t+j)}-C_{k+j}c_{(k+j+1)(t+j)})=c_{(k+j)(t+j+1)},  \notag \\ 
& x_{t+j+1}x_{t+j+2} && ~~c_{(k+j)(t+j+1)}(c_{(k+j-1)(t+j+2)}-C_{k+j}c_{(k+j+1)(t+j+2)})=-C_{t+j+1}c_{(k+j)(t+j+1)},\notag\\ 
& x_{t+j+2}x_{t+j-1} && ~~c_{(k+j)(t+j-1)}(c_{(k+j-1)(t+j+2)}-C_{k+j}c_{(k+j+1)(t+j+2)})=c_{(k+j)(t+j-1)},  \notag\\ 
& x_{t+j-1}x_{t+j}   &&~~ c_{(k+j)(t+j-1)}(c_{(k+j-1)(t+j)}-C_{k+j}c_{(k+j+1)(t+j)})=-C_{t+j-1}c_{(k+j)(t+j-1)}.\notag
\end{align}
For $j=0$, $d \rho(x_{k})= \rho d(x_{k})$ gives 
\begin{align}
& x_{t}x_{t+1}   && c_{k(t+1)}(c_{(k-1)t}-C_{k}c_{(k+1)t})=c_{k(t+1)}, \label{H14} \\ 
& x_{t+1}x_{t+2} && c_{k(t+1)}(c_{(k-1)(t+2)}-C_{k}c_{(k+1)(t+2)})=-C_{t+1}c_{k(t+1)},\label{H15}\\ 
& x_{t+2}x_{t-1} && c_{k(t-1)}(c_{(k-1)(t+2)}-C_{k}c_{(k+1)(t+2)})=c_{k(t-1)}, \label{H16}\\ 
& x_{t-1}x_{t}   && c_{k(t-1)}(c_{(k-1)t}-C_{k}c_{(k+1)t})=-C_{t-1}c_{k(t-1)}. \label{H17}    
\end{align}
For $j=1$,  $d \rho(x_{k+1})= \rho d(x_{k+1})  $  gives
\begin{align}
& x_{t+1}x_{t+2}   && c_{(k+1)(t+2)}(c_{k(t+1)}-C_{k+1}c_{(k+2)(t+1)})=c_{(k+1)(t+2)}, \label{H18} \\ 
& x_{t+2}x_{t-1} && c_{(k+1)(t+2)}(c_{k(t-1)}-C_{k+1}c_{(k+2)(t-1)})=-C_{t+2}c_{(k+1)(t+2)},\label{H19}\\ 
& x_{t-1}x_{t} && c_{(k+1)t}(c_{k(t-1)}-C_{k+1}c_{(k+2)(t-1)})=c_{(k+1)t}, \label{H20} \\ 
& x_{t}x_{t+1}   && c_{(k+1)t}(c_{k(t+1)}-C_{k+1}c_{(k+2)(t+1)})=-C_{t}c_{(k+1)t}. \label{H21}    
\end{align}
For $j=2$,  $d \rho(x_{k+2})= \rho d(x_{k+2})  $  gives
\begin{align}
& x_{t+2}x_{t-1}   && c_{(k+2)(t-1)}(c_{(k+1)(t+2)}-C_{k+2}c_{(k-1)(t+2)})=c_{(k+2)(t-1)}, \label{H24} \\ 
& x_{t-1}x_{t} && c_{(k+2)(t-1)}(c_{(k+1)t}-C_{k+2}c_{(k-1)t})=-C_{t-1}c_{(k+2)(t-1)},\label{H25}\\
& x_{t}x_{t+1} && c_{(k+2)(t+1)}(c_{(k+1)t}-C_{k+2}c_{(k-1)t})=c_{(k+2)(t+1)}, \label{H22}\\ 
& x_{t+1}x_{t+2}   && c_{(k+2)(t+1)}(c_{(k+1)(t+2)}-C_{k+2}c_{(k-1)(t+2)})=-C_{t+1}c_{(k+2)(t+1)}.\label{H23}   
\end{align}
For $j=3$,  $d \rho(x_{k-1})= \rho d(x_{k-1})  $  gives
\begin{align}
& x_{t-1}x_{t}   && c_{(k-1)t}(c_{(k+2)(t-1)}-C_{k-1}c_{k(t-1)})=c_{(k-1)t}, \label{H26} \\ 
& x_{t}x_{t+1} && c_{(k-1)t}(c_{(k+2)(t+1)}-C_{k-1}c_{k(t+1)})=-C_{t}c_{(k-1)t},\label{H27}\\ 
& x_{t+1}x_{t+2} && c_{(k-1)(t+2)}(c_{(k+2)(t+1)}-C_{k-1}c_{k(t+1)})=c_{(k-1)(t+2)}, \label{H28}\\ 
& x_{t+2}x_{t-1}   && c_{(k-1)(t+2)}(c_{(k+2)(t-1)}-C_{k-1}c_{k(t-1)})=-C_{t+2}c_{(k-1)(t+2)}.\label{H29}
\end{align}
Recall from Proposition \ref{linear4v}, the integer $t$ is such that $c_{(k-1)t}-C_{k}c_{(k+1)t} \neq 0$.
So, at least one of $c_{(k-1)t}$ or $c_{(k+1)t}$ is not equal to $0$. Without loss of generality, assume $c_{(k+1)t} \neq 0$. 

Then, from equation \eqref{H20}, we have $c_{k(t-1)}-C_{k+1}c_{(k+2)(t-1)}=1$. By substituting this in equation \eqref{H19}, we have $(1+C_{t+2})c_{(k+1)(t+2)}=0$. Also, from equation \eqref{H21}, we have $c_{k(t+1)}-C_{k+1}c_{(k+2)(t+1)}=-C_{t}$. By substituting this in equation \eqref{H18}, we get $(1+C_{t})c_{(k+1)(t+2)}=0$. \par

Further, $$\rho(x_{k}-C_{k+1}x_{k+2}) =(c_{k(t-1)}-C_{k+1}c_{(k+2)(t-1)})x_{t-1}+(c_{k(t+1)}-C_{k+1}c_{(k+2)(t+1)})x_{t+1} \notin K.$$ 
Then either  $c_{k(t-1)}-C_{k+1}c_{(k+2)(t-1)}$ or $c_{k(t+1)}-C_{k+1}c_{(k+2)(t+1)}$ is not equal to $0$. \par
\textbf{Case 1.}  $c_{k(t+1)}-C_{k+1}c_{(k+2)(t+1)} \neq 0$.
Then we have the following two subcases: \par
\textbf{Subcase 1.} $c_{k(t+1)} \neq 0$. In this case, equations \eqref{H14}-\eqref{H29} can be rewritten as, 
\begin{align}
& x_{t}x_{t+1}   && c_{(k-1)t}-C_{k}c_{(k+1)t}=1,\label{H30}  \\ 
&    && c_{k(t+1)}-C_{k+1}c_{(k+2)(t+1)}=-C_{t},\label{H31}  \\
&  && c_{(k+2)(t+1)}(c_{(k+1)t}-C_{k+2}c_{(k-1)t})=c_{(k+2)(t+1)},\label{H32} \\
&  && c_{(k-1)t}(c_{(k+2)(t+1)}-C_{k-1}c_{k(t+1)})=-C_{t}c_{(k-1)t},\label{H33}\\ 
&   && \notag\\
& x_{t+1}x_{t+2} && c_{(k-1)(t+2)}-C_{k}c_{(k+1)(t+2)}=-C_{t+1},\label{H34}\\ 
&   && c_{(k+1)(t+2)}(1+C_{t})=0,\label{H35}\\
&    && c_{(k+2)(t+1)}(c_{(k+1)(t+2)}-C_{k+2}c_{(k-1)(t+2)})=-C_{t+1}c_{(k+2)(t+1)},\label{H36}\\
&  && c_{(k-1)(t+2)}(c_{(k+2)(t+1)}-C_{k-1}c_{k(t+1)})=c_{(k-1)(t+2)},\label{H37} \\ 
& && \notag\\
& x_{t+2}x_{t-1}  && c_{k(t-1)}(1+C_{t+1})=0,\label{H38}\\
&   && c_{(k+1)(t+2)}(1+C_{t+2})=0,\label{H39}\\
&    && c_{(k+2)(t-1)}(c_{(k+1)(t+2)}-C_{k+2}c_{(k-1)(t+2)})=c_{(k+2)(t-1)}, \label{H40} \\
&   && c_{(k-1)(t+2)}(c_{(k+2)(t-1)}-C_{k-1}c_{k(t-1)})=-C_{t+2}c_{(k-1)(t+2)},\label{H41}\\
& && \notag\\
& x_{t-1}x_{t}   && c_{k(t-1)}(1+C_{t-1})=0,\label{H42}\\
&  && c_{k(t-1)}-C_{k+1}c_{(k+2)(t-1)}=1,\label{H43}  \\ 
&  && c_{(k+2)(t-1)}(c_{(k+1)t}-C_{k+2}c_{(k-1)t})=-C_{t-1}c_{(k+2)(t-1)},\label{H44}\\
&    && c_{(k-1)t}(c_{(k+2)(t-1)}-C_{k-1}c_{k(t-1)})=c_{(k-1)t}.\label{H45}  
\end{align}
If any of the $s_{3}(a)$ or $s_{4}$ or $s_{5}$ holds, then in each case, from equations \eqref{H35}, \eqref{H38}, \eqref{H39}, and \eqref{H42}, we have $c_{(k+1)(t+2)}=c_{k(t-1)}=0$. So, $\rho(x_{k})=c_{k(t+1)}x_{t+1}$, $\rho(x_{k+1})=c_{(k+1)t}x_{t}$, $\rho(x_{k-1})=c_{(k-1)t}x_{t}+c_{(k-1)(t+2)}x_{t+2}$ and $\rho(x_{k+2})=c_{(k+2)(t-1)}x_{t-1}+c_{(k+2)(t+1)}x_{t+1}$. \newline
As $\rho$ is an automorphism, we have $c_{(k-1)(t+2)} \neq 0$ and $c_{(k+2)(t-1)} \neq 0$. Also, $C_{i} \neq -1$ for at least one $i$.
Without loss of generality, we assume $C_{t} \neq -1$.
Now, equations \eqref{H33} and \eqref{H37} together with $C_{t} \neq -1$ give $c_{(k-1)t}=0$.
So, $\rho(x_{k-1})=c_{(k-1)(t+2)}x_{t+2}$. \par
Now, from equation \eqref{H44}, we have $c_{(k+1)t}=-C_{t-1}$. Further, equation $\eqref{H32}$  $c_{(k+2)(t+1)}(1+C_{t-1})=0$. 
Similarly, from equation \eqref{H40} and \eqref{H36}, $c_{(k+2)(t+1)}(1+C_{t+1})=0$. These two together give  $c_{(k+2)(t+1)}=0$ as one of $1+C_{t+1}$ or $1+C_{t-1}$ is non-zero. So, $\rho(x_{k+2})=c_{(k+2)(t-1)}x_{t-1}$.

Therefore, $\rho$ is of the form 
$$\rho(x_{k+j})=c_{(k+j)(t-j+1)}x_{t-j+1} \hspace{1cm} \forall~~ j \in \mathbb{Z}_{4}.$$ 

\textbf{Subcase 2.} $c_{(k+2)(t+1)} \neq 0$. With the similar argument as in \textbf{Subcase 1}, we can show that $$\rho(x_{k+j})=c_{(k+j)(t+j-1)}x_{t+j-1} \hspace{1cm} \forall ~~j \in \mathbb{Z}_{4}.$$
\par
\textbf{Case 2.} $c_{k(t-1)}-C_{k+1}c_{(k+2)(t-1)} \neq 0$. Using a similar argument as in \textbf{Case 1},  if $c_{k(t-1)} \neq 0$, then 
$\rho$ is of the form $$\rho(x_{k+j})=c_{(k+j)(t+j-1)}x_{t+j-1} \hspace{1cm} \forall~~ j \in \mathbb{Z}_{4},$$
and if $c_{(k+2)(t-1)} \neq 0$, then $\rho$ is of the form $$\rho(x_{k+j})=c_{(k+j)(t-j+1)}x_{t-j+1} \hspace{1cm} \forall~~ j \in \mathbb{Z}_{4}.$$
\par

\vspace{.2cm}
\noindent Now, we want to compute the relations among $C_{i}$'s. \\When $\rho$ is of the form 
$$\rho(x_{k+j})=c_{(k+j)(t-j+1)}x_{t-j+1} \hspace{1cm} \forall~~ j \in \mathbb{Z}_{4}.$$ 
The relation  $d\rho(x_{k+j})=\rho d (x_{k+j})$ gives
\begin{align}
c_{(k+j)(t-j+1)}x_{t-j+1}(x_{t-j} - C_{t-j+1}x_{t-j+2})= c_{(k+j)(t-j+1)} & x_{t-j+1} (c_{(k+j-1)(t-j+2)}x_{t-j+2}\notag\\& -C_{k+j}c_{(k+j+1)(t-j)}x_{t-j}) \notag.
\end{align}
By comparing the coefficients $x_{t-j}x_{t-j+1}$ and $x_{t-j+1}x_{t-j+2}$ in the above equation, we have $-C_{k+j}c_{(k+j+1)(t-j)}=1$ and $c_{(k+j-1)(t-j+2)}=- C_{t-j+1}$, respectively. Thus, for $ k,t \in \mathbb{Z}_{4}$, $C_{k+j-1}C_{t-j}=1$, and $\rho(x_{k+j})=-C_{t-j}x_{t-j+1}$ for all $j \in \mathbb{Z}_{4}$.\newline \par
When $\rho$ is of the form 
$$\rho(x_{k+j})=c_{(k+j)(t+j-1)}x_{t+j-1} \hspace{1cm} \forall ~~j \in \mathbb{Z}_{4}.$$ The relation $d\rho(x_{k+j})=\rho d (x_{k+j})$ gives
\begin{align}
c_{(k+j)(t+j-1)}x_{t+j-1}(x_{t+j-2} - C_{t+j-1}x_{t+j})= c_{(k+j)(t+j-1)}&x_{t+j-1} (c_{(k+j-1)(t+j-2)}x_{t+j-2}\notag\\& - C_{k+j}c_{(k+j+1)(t+j)}x_{t+j}) \notag.
\end{align}
By comparing the coefficients $x_{t+j-1}x_{t+j-2}$ and $x_{t+j-1}x_{t+j}$ in the above equation, we have $c_{(k+j-1)(t+j-2)}=1$ and $C_{k+j}c_{(k+j+1)(t+j)}= C_{t+j-1}$, respectively. Further, $c_{(k+j+1)(t+j)}=1$ and $C_{k+j}= C_{t+j-1}$. Thus, for $  k,t \in \mathbb{Z}_{4}$, $C_{k+j}= C_{t+j-1}$, and $\rho$ is given by
$\rho(x_{k+j})=x_{t+j-1}$ for all $j \in \mathbb{Z}_{4}$.\newline
Hence, in each case, the isotropy group of $d$ is finite.
\end{proof}

\begin{corollary}\label{4C_{i}=1}
 Let $d$ be the derivation as defined in Theorem \ref{4vT1}. If $C_{i}=1$ for all $i\in \mathbb{Z}_{4}$, then the isotropy group of $d$ is isomorphic to the dihedral group $D_{8}$ of order $8$.  
\end{corollary}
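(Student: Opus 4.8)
The plan is to specialize the classification obtained in the proof of Theorem \ref{4vT1} to the case $C_i = 1$ for all $i$, enumerate the resulting automorphisms, and then recognize the group they form. Since $C_i = 1 \neq -1$ for every $i$, sentence $s_5$ holds, so Theorem \ref{4vT1} applies and every $\rho \in \mathrm{Aut}(K[x_1,x_2,x_3,x_4])_d$ is one of the two normal forms isolated at the end of that proof. Substituting $C_i = 1$ into the first form $\rho(x_{k+j}) = -C_{t-j}x_{t-j+1}$ gives $\rho(x_{k+j}) = -x_{t-j+1}$; re-indexing by $i = k+j$ this reads $\rho(x_i) = -x_{c-i}$ with $c = k+t+1 \in \mathbb{Z}_4$. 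Substituting $C_i = 1$ into the second form $\rho(x_{k+j}) = x_{t+j-1}$ gives $\rho(x_i) = x_{i+s}$ with $s = t-k-1 \in \mathbb{Z}_4$. The constraints $C_{k+j-1}C_{t-j} = 1$ and $C_{k+j} = C_{t+j-1}$ are vacuous once all $C_i = 1$, so $c$ and $s$ each range freely over $\mathbb{Z}_4$. This produces exactly four ``rotation'' automorphisms $R_s : x_i \mapsto x_{i+s}$ (with $R_0$ the identity) and four ``reflection'' automorphisms $S_c : x_i \mapsto -x_{c-i}$; the rotations carry coefficient $+1$ and the reflections coefficient $-1$, so the eight maps are pairwise distinct and $|\mathrm{Aut}(K[X])_d| = 8$.

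Next I would identify the abstract group. Set $r = R_1$ and $s = S_0$. A direct computation gives $r^4 = \mathrm{id}$, so $r$ has order $4$, while each reflection is an involution: $S_c(S_c(x_i)) = S_c(-x_{c-i}) = x_i$, the two sign changes cancelling, whence $s^2 = \mathrm{id}$. The decisive relation is the conjugation $srs = r^{-1}$, verified by tracking indices and signs: $s(x_i) = -x_{-i}$, then $r(-x_{-i}) = -x_{-i+1}$, then $s(-x_{-i+1}) = x_{i-1} = r^{-1}(x_i)$. Hence $\langle r, s\rangle$ satisfies the presentation $r^4 = s^2 = \mathrm{id}$, $srs = r^{-1}$ of the dihedral group $D_8$, and since this subgroup already contains eight elements it is the whole isotropy group. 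Therefore $\mathrm{Aut}(K[x_1,x_2,x_3,x_4])_d \cong D_8$.

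The step requiring the most care is the last one: because each reflection carries a global sign $-1$, one must confirm both that the $S_c$ remain involutions (the signs cancelling under composition) and that the conjugation relation yields $r^{-1}$ rather than some other element, so as to rule out the remaining order-$8$ groups such as $\mathbb{Z}_8$, $\mathbb{Z}_4 \times \mathbb{Z}_2$, and $Q_8$, and pin down $D_8$ precisely. Once the sign bookkeeping in $srs = r^{-1}$ is carried out, the identification is immediate.
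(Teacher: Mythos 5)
Your proposal is correct and follows essentially the same route as the paper: invoke $s_5$, specialize the two normal forms $\rho(x_{k+j})=-C_{t-j}x_{t-j+1}$ and $\rho(x_{k+j})=x_{t+j-1}$ from the proof of Theorem \ref{4vT1} to $C_i=1$, and enumerate the resulting eight automorphisms, which match the paper's list exactly. Your explicit verification of the relations $r^4=s^2=\mathrm{id}$ and $srs=r^{-1}$ is a welcome addition, since the paper lists the eight elements and leaves the identification with $D_8$ implicit.
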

\begin{proof}
 Let $\rho \in Aut( K[x_{1}, x_{2}, x_{3}, x_{4}]  )_{d}$. Since $C_{i}=1$ for all $i$, $s_{5}$ holds. From Theorem \ref{4vT1}, we have $\rho(x_{k+j})=-x_{t-j+1}$ and $\rho(x_{k+j})=x_{t+j-1}$ for all $j,k,t \in \mathbb{Z}_{4}$. Then, the isotropy group of $d$ is 
\begin{align}
\text{Aut}(K[x_{1}, x_{2}, x_{3}, x_{4}])_d = \{&(\rho(x_{1}),\rho(x_{2}),\rho(x_{3}),\rho(x_{4}))= (x_{1},x_{2},x_{3},x_{4}), (x_{2},x_{3},x_{4},x_{1}),\notag\\
&  ~~~~~~~~ (x_{3},x_{4},x_{1},x_{2}),(x_{4},x_{1},x_{2},x_{3}),
  (-x_{3},-x_{2},-x_{1},-x_{4}),\notag\\
&(-x_{4},-x_{3},-x_{2},-x_{1}), (-x_{1},-x_{4},-x_{3},-x_{2}),(-x_{2},-x_{1},-x_{4},-x_{3}) \}. \notag  
\end{align}
\end{proof}

\begin{theorem}
Let $d$ be the derivation as defined in equation \eqref{LVD4}. If $s_{2}$ holds, then the isotropy group of $d$ is isomorphic to $\mathbb{Z}_{2}$. \end{theorem}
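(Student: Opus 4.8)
The plan is to follow the two-stage strategy of Theorem \ref{4vT1}: first force every $\rho(x_i)$ to be linear, then substitute the linear ansatz into the commutation relations. Since isotropy groups of conjugate derivations are conjugate (as noted in the introduction), and conjugating $d$ by the cyclic shift $x_i\mapsto x_{i+1}$ merely shifts the constants $C_i$, I may assume without loss of generality that $C_1=c\neq-1$ and $C_2=C_3=C_4=-1$. The pair $\{C_1,C_3\}$ then contains a constant different from $-1$, so Remark \ref{remarkfor4variable} immediately gives that $\rho(x_1)$ and $\rho(x_3)$ have total degree $1$. The genuinely new difficulty is that the pair $\{C_2,C_4\}$ is \emph{entirely} $-1$, which is exactly the configuration in which Remark \ref{remarkfor4variable} is powerless and in which the nonlinear automorphisms of the infinite cases $s_1$ and $s_3(b)$ appear. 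To control $\rho(x_2)$ and $\rho(x_4)$ I apply Lemma \ref{totaldegree} at two flanking indices at once: at $i=3$ (where $C_3=-1$) it says $\rho(x_2+x_4)$ has total degree at most $1$, and at $i=1$ it says $\rho(x_4-cx_2)$ has total degree at most $1$. Subtracting, the degree $\geq 2$ part of $(1+c)\rho(x_2)$ vanishes, and since $c\neq-1$ this forces $\rho(x_2)$, and hence $\rho(x_4)$, to be linear. Thus all $\rho(x_i)$ are affine-linear.

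With linearity established I invoke Proposition \ref{linear4v} to put $\rho$ in the two-term normal form $\rho(x_{k+j})=c_{k+j}+c_{(k+j)(t+j-1)}x_{t+j-1}+c_{(k+j)(t+j+1)}x_{t+j+1}$, and then substitute into $d\rho(x_i)=\rho d(x_i)$, reusing the coefficient comparisons \eqref{H14} through \eqref{H29} from the proof of Theorem \ref{4vT1}. Comparing coefficients of the squared variables kills the constants $c_{k+j}$, and comparing the $x_a x_{a+1}$ coefficients produces the same linear relations decorated with factors $(1+C_i)$. The one departure from the cases $s_3(a),s_4,s_5$ is that here exactly one pair $\{C_i,C_{i+2}\}$ is all $-1$, so one of the vanishing steps used there — the one that set a coefficient to zero because $1+C_i\neq0$ — fails on that pair; I keep that coefficient as a free parameter and let the remaining relations pin it down. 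Carrying this out, I expect exactly two solutions: the identity, and the involution
\[ \rho(x_1)=x_3,\qquad \rho(x_2)=-x_2,\qquad \rho(x_3)=x_1,\qquad \rho(x_4)=(1-c)x_2+x_4, \]
which one checks directly satisfies $d\rho(x_i)=\rho d(x_i)$ for every $i$ and squares to the identity. When $c\neq1$ this is \emph{not} a signed permutation: the surviving free parameter $(1-c)$ is precisely the off-diagonal term $(1-c)x_2$, which is why the ``monomial'' reductions of Theorem \ref{4vT1} cannot be quoted verbatim.

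It remains to confirm that no further solutions occur. The rotation-type form from Theorem \ref{4vT1} requires the pattern $(c,-1,-1,-1)$ to be invariant under a nontrivial cyclic shift, which it is not, so it collapses to the identity; the reflection-type form requires $C_aC_b=1$ on paired indices, and under $s_2$ the special index $1$ cannot be paired with any index carrying $-1$ (that would give $-c=1$, i.e.\ $c=-1$), so it must be self-paired, forcing $c=1$ and reproducing the very same involution. Hence $\text{Aut}(K[x_1,x_2,x_3,x_4])_d=\{\mathrm{id},\rho\}\cong\mathbb{Z}_2$. I expect the main obstacle to be the linearity step for $\rho(x_2),\rho(x_4)$: Remark \ref{remarkfor4variable} is useless on the all-$(-1)$ pair and the naive leading-term comparison degenerates exactly as in the infinite cases, so the simultaneous application of Lemma \ref{totaldegree} at the two flanking indices, crucially using $c\neq-1$, is the step that breaks the symmetry and excludes the nonlinear symmetries. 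The rest is careful bookkeeping of the linear system to verify exhaustiveness.
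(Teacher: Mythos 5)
Your setup is correct and coincides with the paper's own strategy: the cyclic-shift conjugation reducing to $C_1=c\neq-1$, $C_2=C_3=C_4=-1$ is legitimate (conjugate derivations have conjugate isotropy groups), the use of Remark \ref{remarkfor4variable} for $\rho(x_1),\rho(x_3)$, and the cancellation $(1+c)\rho(x_2)=\rho(x_2+x_4)-\rho(x_4-cx_2)$ obtained from Lemma \ref{totaldegree} at the two flanking indices is exactly the device the paper uses to force linearity of $\rho(x_2),\rho(x_4)$. Your involution is also the correct one: it is the paper's equation \eqref{4T3} with $k=2$, and your verification that it commutes with $d$ and squares to the identity checks out.

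The genuine gap is exhaustiveness, and the argument you substitute for it is invalid. After invoking Proposition \ref{linear4v} you announce that the coefficient relations will ``pin down'' the free parameter and that you ``expect exactly two solutions'', but you never run the analysis; instead your closing paragraph rules out further automorphisms by testing only the rotation-type maps $\rho(x_{k+j})=x_{t+j-1}$ and reflection-type maps $\rho(x_{k+j})=-C_{t-j}x_{t-j+1}$ from Theorem \ref{4vT1}. That two-type classification is a \emph{conclusion} of the case analysis valid under $s_3(a)$, $s_4$, $s_5$, where equations \eqref{H35}, \eqref{H38}, \eqref{H39}, \eqref{H42} force $c_{(k+1)(t+2)}=c_{k(t-1)}=0$; under $s_2$ the all-$(-1)$ pair $\{C_2,C_4\}$ makes two of those four equations vacuous, so a solution need not be of either type --- your own involution, with its off-diagonal term $(1-c)x_2$, is of neither type when $c\neq 1$, as you yourself observe. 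Checking the two monomial families therefore cannot certify that nothing else commutes with $d$; it is circular to exclude unknown solutions by inspecting only the shapes that the (inapplicable) theorem would have produced. What is actually required --- and what occupies most of the paper's proof --- is the branch-by-branch elimination on the system \eqref{H30}--\eqref{H45} under the degenerate hypothesis: using \eqref{H33} and \eqref{H37} to kill $c_{(k-1)t}$, \eqref{H32}/\eqref{H44} and \eqref{H36}/\eqref{H40} to fix $c_{(k+1)t}=1$ and force $C_{k+2}=-1$, then splitting on which of $C_{k\pm1}$ equals $-1$: one branch collapses to a contradiction (it forces $c_{k(t+1)}=0$, impossible), the other yields exactly \eqref{4T3}, and the symmetric cases give the identity. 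Until that elimination is carried out, your ``exactly two solutions'' is a conjecture supported by a non sequitur, not a proof that the isotropy group is $\mathbb{Z}_2$ rather than something larger.
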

\begin{proof}
As $s_{2}$ holds, exactly three $C_{j}$'s are $-1$. Let  $C_{i} \neq -1$ and $C_{i+1}=C_{i+2}=C_{i+3}=-1 $ for some $i \in \mathbb{Z}_4$. Let $\rho \in  \text{Aut}( K[ x_{1},x_{2},x_{3},x_{4}])_{d}$.
Then, by Remark \ref{remarkfor4variable}, we know that $\rho(x_{i})$ is a polynomial in $K[x_{1},x_{2},x_{3},x_{4}]$ of total degree $1$. Also, by Lemma \ref{totaldegree} $\rho(x_{i}+x_{i+2})$ is a polynomial in $K[x_{1},x_{2},x_{3},x_{4}]$ of total degree $1$. This implies $\rho(x_{i+2})$ is a polynomial in $K[x_{1},x_{2},x_{3},x_{4}]$ of total degree $1$.

Again, from Lemma \ref{totaldegree} $\rho(x_{i-1}-C_{i}x_{i+1})$, and $\rho(x_{i+1}+x_{i-1})$ are polynomials in $K[x_{1},x_{2},x_{3},x_{4}]$ of total degree $1$. Let $\rho(x_{i-1})-C_{i}\rho(x_{i+1})={p_{0}}+{p_{1}}$ and $\rho(x_{i+1})+\rho(x_{i-1})=h_{0}+h_{1}$, where $h_{0},p_{0} \in K$ and $h_{1}, p_{1}$  are homogeneous polynomials of degree $1$. This gives $(1+C_{i})\rho(x_{i+1})=h_{0}+h_{1}-(p_{0}+p_{1})$. As $C_{i} \neq -1$, $\rho(x_{i+1})$ is also a polynomial of total degree 1, which further implies $\rho(x_{i-1})$ is a polynomial of total degree 1. Hence, for all $i$, $\rho(x_{i})$ is a polynomial of total degree $1$, and from Proposition \ref{linear4v}, $\rho$ is of the form
 \begin{eqnarray}
\rho(x_{k+j})=c_{k+j}+c_{(k+j)(t+j-1)}x_{t+j-1}+c_{(k+j)(t+j+1)}x_{t+j+1} \hspace{1cm} \forall~~ j \in \mathbb{Z}_{4}.\notag
\end{eqnarray}  
Now, $d \rho(x_{k+j})= \rho d(x_{k+j})$ gives equation \eqref{H9}. We proceed with a similar argument as in Theorem \ref{4vT1}. Let $c_{(k+1)t} \neq 0$,
and $$\rho(x_{k}-C_{k+1}x_{k+2}) =(c_{k(t-1)}-C_{k+1}c_{(k+2)(t-1)})x_{t-1}+(c_{k(t+1)}-C_{k+1}c_{(k+2)(t+1)})x_{t+1} \notin K.$$ 
Then either  $c_{k(t-1)}-C_{k+1}c_{(k+2)(t-1)}$ or $c_{k(t+1)}-C_{k+1}c_{(k+2)(t+1)}$ is not equal to $0$. \par
\textbf{Case 1.}  $c_{k(t+1)}-C_{k+1}c_{(k+2)(t+1)} \neq 0$.
Then we have the following two subcases:
\par
\textbf{Subcase 1.} $c_{k(t+1)} \neq 0$. In this case, we have equations \eqref{H30}-\eqref{H45}. \newline 
Without loss of generality, we assume $C_{t} \neq -1$. From equation \eqref{H35},  $c_{(k+1)(t+2)}=0$ and $\rho(x_{k+1})=c_{(k+1)t}x_{t}$. Further, $\rho(x_{k-1})=c_{(k-1)t}x_{t}+c_{(k-1)(t+2)}x_{t+2}$ gives  $c_{(k-1)(t+2)} \neq 0$ (as $\rho$ is an automorphism). By equation \eqref{H37}, $c_{(k+2)(t+1)}-C_{k-1}c_{k(t+1)}=1$, which further from equation \eqref{H33} gives $c_{(k-1)t}=0$. \par
Since $C_{t} \neq -1$, by $s_2$, $C_{t-1}=C_{t+1}=C_{t+2}=-1$. Hence, from equation \eqref{H34},  $c_{(k-1)(t+2)}=1$ and $\rho(x_{k-1})=x_{t+2}$. Also, $\rho(x_{k+2})=c_{(k+2)(t-1)}x_{t-1}+c_{(k+2)(t+1)}x_{t+1}$. Then either $c_{(k+2)(t-1)}$ or $c_{(k+2)(t+1)}$ is non equal to zero. So, from equations \eqref{H32} and \eqref{H44}, we have $c_{(k+1)t}=1$ as $c_{(k-1)t}=0$ and $\rho(x_{k+1})=x_{t}$. Similarly, from equations \eqref{H36} and \eqref{H40}, we have $C_{k+2}=-1$ as $c_{(k-1)(t+2)}=1$. Thus, from equations \eqref{H41} and \eqref{H43}, we have $c_{(k+2)(t-1)}-C_{k-1}c_{k(t-1)}=1$ and $c_{k(t-1)}-C_{k+1}c_{(k+2)(t-1)}=1$, respectively. \par
As $c_{(k+1)t}=1$, from equation \eqref{H30}, we have $C_{k}=-1$. Further, either $C_{k-1} =-1$ or $C_{k+1}=-1 $ but not both. \newline 
If $C_{k-1} =-1$ and $C_{k+1} \neq -1$, $k+1=t$, and the equations $c_{(k+2)(t-1)}-C_{k-1}c_{k(t-1)}=1$ and $c_{k(t-1)}-C_{k+1}c_{(k+2)(t-1)}=1$ together gives $c_{(k+2)(t-1)}=0$ and $c_{k(t-1)}=1$. So, $\rho(x_{k})=x_{t-1}+c_{k(t+1)}x_{t+1}$ and $\rho(x_{k+2})=c_{(k+2)(t+1)}x_{t+1}$. Also, $c_{(k-1)(t+2)}=1$, so, from equations \eqref{H31} and \eqref{H37}, we have $c_{k(t+1)}-C_{k+1}c_{(k+2)(t+1)}=-C_{t}$ and $c_{(k+2)(t+1)}-C_{k-1}c_{k(t+1)}=1$. This together gives $c_{k(t+1)}=0$, which is not possible. 
Hence, $C_{k+1} = -1$ and $C_{k-1} \neq -1 $ and $t=k-1$. Further, equations $c_{(k+2)(t-1)}-C_{k-1}c_{k(t-1)}=1$ and $c_{k(t-1)}-C_{k+1}c_{(k+2)(t-1)}=1$ together gives $c_{k(t-1)}=0$ and $c_{(k+2)(t-1)}=1$. So, $\rho(x_{k})=c_{k(t+1)}x_{t+1}$ and $\rho(x_{k+2})=x_{t-1}+c_{(k+2)(t+1)}x_{t+1}$. 
\par 
Further, $c_{k(t+1)}=-1$ and $c_{(k+2)(t+1)}=1-C_{t}$, and $\rho$ is of the form
\begin{equation}\label{4T3}
\rho(x_{k-1})=x_{k+1}, ~~\rho(x_{k})=-x_{k},~~ \rho(x_{k+1})=x_{k-1},~~\rho(x_{k+2})=x_{k+2}+(1-C_{k-1})x_{k}.    
\end{equation}

\textbf{Subcase 2.} If $ c_{(k+2)(t+1)} \neq 0$. Using a similar argument, we can show that if $C_{t} \neq -1$, we have $C_{k+2}=C_{k}=-1$, and  there are two possible combinations: 
\begin{enumerate}
    \item $C_{k-1}=-1$ and $C_{k+1} \neq -1$, 
    \item $C_{k+1}=-1$ and $C_{k-1}  \neq -1 $.
\end{enumerate}
In the first choice, $t=k+1$, and $\rho$ is an identity automorphism.   \newline
In the second choice, $t=k-1$, and $\rho$ is of the form 
$$\rho(x_{k-1})=x_{k+1}, ~~\rho(x_{k})=-x_{k},~~ \rho(x_{k+1})=x_{k-1}, ~~\rho(x_{k+2})=x_{k+2}+(1-C_{k-1})x_{k}.$$ \par
Further, if $C_{t+1}$ or $C_{t+2}$ is not equal to $-1$, then $\rho$ is an identity automorphism. \par
If $C_{t-1} \neq -1$, then  $\rho$ is either an identity automorphism or  given by equation \eqref{4T3}. \par
\textbf{Case 2.} $c_{k(t-1)}-C_{k+1}c_{(k+2)(t-1)} \neq 0$. Using a similar argument as in \textbf{Case 1}, we observe that $\rho$ is either an identity automorphism or given by equation \eqref{4T3}.\newline 
Hence, we conclude that the isotropy group of $d$ is isomorphic to $\mathbb{Z}_{2}$.
\end{proof}

Now, we show that in the case of $s_{3}(b)$, the isotropy group of $d$ is infinite. 

\begin{theorem}\label{T4.2}
Let $d$ be the derivation as defined in equation \eqref{LVD4}. If $s_{3}(b)$ holds, then the isotropy group of $d$ is not finite.
\end{theorem}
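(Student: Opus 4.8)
The plan is to establish non-finiteness by producing an explicit infinite family $\{\rho_{\mu}\}_{\mu\in K}$ inside $\mathrm{Aut}(K[x_{1},x_{2},x_{3},x_{4}])_{d}$. Since a cyclic relabelling of the variables conjugates $d$ to another Lotka--Volterra derivation, and isotropy groups are preserved under conjugation, I may assume the two indices with $C_{i}=-1$ are $1$ and $3$, so that $C_{1}=C_{3}=-1$ while $C_{2},C_{4}\neq-1$. The structural feature I would exploit is that $d(x_{1})=x_{1}(x_{2}+x_{4})$ and $d(x_{3})=x_{3}(x_{2}+x_{4})$ share the common factor $Q:=x_{2}+x_{4}$; equivalently $d(x_{1})/x_{1}=d(x_{3})/x_{3}$, so $d$ scales $x_{1}$ and $x_{3}$ by the \emph{same} linear form, and more generally $d(x_{1}-C_{2}x_{3})=(x_{1}-C_{2}x_{3})\,Q$. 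This is exactly the degeneracy that $s_{3}(b)$ provides, and which Remark~\ref{remarkfor4variable} already flags as allowing $\rho(x_{1}),\rho(x_{3})$ to become non-linear. The first step is to produce from it a polynomial first integral: I would check that $d$ annihilates the linear form $J:=x_{1}-C_{2}x_{3}-x_{2}+C_{2}x_{4}$ (which specializes to $J=x_{1}+x_{3}-x_{2}-x_{4}$ in the fully degenerate model $C_{i}=-1$).

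With such a $J$ in hand, for each $\mu\in K$ I would define the shear $\rho_{\mu}$ fixing $x_{2}$ and $x_{4}$ and setting, with $\eta:=x_{1}-C_{2}x_{3}$,
\[
\rho_{\mu}(x_{1})=x_{1}+\mu\,\eta J,\qquad \rho_{\mu}(x_{3})=x_{3}+\mu C_{4}\,\eta J .
\]
The scalar $C_{4}$ in the second formula is chosen precisely so that the correction terms cancel in $\rho_{\mu}(\eta)$, i.e.\ so that $\rho_{\mu}$ \emph{fixes} the combination $\eta=x_{1}-C_{2}x_{3}$ that commutation with $d$ through $x_{2}$ forces to remain linear. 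Verifying $\rho_{\mu}d=d\rho_{\mu}$ then reduces to four direct computations of homogeneous pieces: on $x_{1}$ and $x_{3}$ one uses $d(\eta J)=d(\eta)J+\eta\,d(J)=(\eta Q)J$ together with $\rho_{\mu}(Q)=Q$, which gives $d\rho_{\mu}(x_{i})=\rho_{\mu}(x_{i})Q=\rho_{\mu}d(x_{i})$; on $x_{2}$ and $x_{4}$ one uses that $\rho_{\mu}$ fixes the linear forms $x_{1}-C_{2}x_{3}$ and $x_{3}-C_{4}x_{1}$, so that $\rho_{\mu}d(x_{2})=\rho_{\mu}(x_{2})\,\rho_{\mu}(\eta)=x_{2}\eta=d\rho_{\mu}(x_{2})$ and similarly for $x_{4}$. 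No estimate is needed once $J$ and the shear direction are correctly identified.

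Finally I would check that each $\rho_{\mu}$ is a genuine automorphism and that the family is infinite. In the coordinates obtained by replacing $x_{3}$ by $\eta=x_{1}-C_{2}x_{3}$, the map $\rho_{\mu}$ fixes $\eta,x_{2},x_{4}$ and translates $x_{1}$ by the polynomial $\mu\,\eta J$ lying in the subring they generate; hence $\rho_{\mu}$ is an elementary (triangular) automorphism with inverse $\rho_{-\mu}$, and $\mu\mapsto\rho_{\mu}$ is an injective homomorphism from $(K,+)$, whence $\mathrm{Aut}(K[x_{1},x_{2},x_{3},x_{4}])_{d}$ is infinite. I expect the genuine obstacle to be the first, conceptual step rather than any calculation: one must locate the first integral $J$ and the shear direction $\eta$ so that commutation with $d$ and triangularity (invertibility) hold \emph{simultaneously}. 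Choosing them badly breaks one of the two --- either $\rho_{\mu}(x_{1}-C_{2}x_{3})$ fails to be preserved (so the $x_{2},x_{4}$ checks collapse), or the map ceases to be triangular and so need not be invertible. It is exactly the alternating pattern $C_{1}=C_{3}=-1$, forcing $d(x_{1})$ and $d(x_{3})$ to share the factor $x_{2}+x_{4}$ and producing the integral $J$, that makes a consistent choice possible; this is the point at which the hypothesis $s_{3}(b)$ is essential.
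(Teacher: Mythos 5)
Your normalization to $C_{1}=C_{3}=-1$, $C_{2},C_{4}\neq-1$ via cyclic relabelling is legitimate, but the pivotal claim of your proposal --- that $d$ annihilates $J=x_{1}-C_{2}x_{3}-x_{2}+C_{2}x_{4}$ --- is false in this generality. With $d(x_{1})=x_{1}(x_{2}+x_{4})$, $d(x_{2})=x_{2}(x_{1}-C_{2}x_{3})$, $d(x_{3})=x_{3}(x_{2}+x_{4})$, $d(x_{4})=x_{4}(x_{3}-C_{4}x_{1})$, a direct computation gives
\begin{equation*}
d(J)=(1-C_{2}C_{4})\,x_{1}x_{4},
\end{equation*}
which vanishes only when $C_{2}C_{4}=1$. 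The same obstruction hits the shear itself: $\rho_{\mu}(\eta)=\eta+\mu(1-C_{2}C_{4})\,\eta J$, so $\rho_{\mu}$ fixes $\eta$ only when $C_{2}C_{4}=1$ (the form $x_{3}-C_{4}x_{1}$ is indeed always fixed, but that alone is not enough). Consequently the verification you outline collapses whenever $C_{2}C_{4}\neq1$: the $x_{1},x_{3}$ checks use $d(\eta J)=\eta QJ$, i.e.\ $d(J)=0$, and the $x_{2}$ check uses $\rho_{\mu}(\eta)=\eta$. Since $s_{3}(b)$ only requires the two alternating constants to equal $-1$ and the other two to be $\neq-1$ (for instance $C_{2}=2$, $C_{4}=3$ is allowed), your argument covers only the subfamily $C_{2}C_{4}=1$.

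This is not a repairable technicality, because for $C_{2}C_{4}\neq1$ no infinite family exists at all. Indeed, Lemma \ref{totaldegree} applied with $i=2$ and $i=4$ shows that $\rho(x_{1}-C_{2}x_{3})$ and $\rho(x_{3}-C_{4}x_{1})$ have degree $1$, and these two forms span $\langle x_{1},x_{3}\rangle$ exactly because $1-C_{2}C_{4}\neq0$; combined with Remark \ref{remarkfor4variable} (which gives linearity of $\rho(x_{2})$ and $\rho(x_{4})$, since $C_{2},C_{4}\neq-1$), every $\rho(x_{i})$ is forced to be linear, and then the plane-preserving/plane-swapping structure of Proposition \ref{linear4v} together with a short coefficient analysis shows the isotropy group is finite (one finds only the identity and a single involution sending $x_{2}\mapsto x_{4}$, $x_{4}\mapsto x_{2}$). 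So the failure is in the statement's generality, not merely in your argument: the theorem is actually correct only on the subfamily of $s_{3}(b)$ where the two constants different from $-1$ have product $1$. For what it is worth, the paper's own proof has exactly the same restriction --- it verifies a family only for $C_{1}=C_{3}=1$, $C_{2}=C_{4}=-1$, which after relabelling is precisely the product-one case. On that subfamily your construction is correct and genuinely different from the paper's: the paper exhibits a two-parameter family of \emph{linear} automorphisms, while your $\rho_{\mu}$ are degree-two shears, closer in spirit to the non-linear elements the paper produces for $s_{1}$ in Theorem \ref{infinite}.
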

\begin{proof}
For $C_{1}=C_{3}=1$ and $C_{2}=C_{4}=-1$, consider an automorphism $\rho$ given by:
$$\rho(x_{1})=x_{3},~~ \rho(x_{2})=\alpha x_{2}+\beta x_{4},~~
 \rho(x_{3})=x_{1},~~\rho(x_{4})=(\alpha+1) x_{2}+(\beta -1) x_{4},$$ 
where $\alpha,\beta \in K$. It is easy to observe that $\rho \in \text{Aut}(K[X])_{d}$. Since $K$ is infinite, the isotropy group of $d$ contains infinitely many elements. 
\end{proof}

In Theorem \ref{T4.2}, we observed that the isotropy group of Lotka-Volterra derivation $d$, defined in equation \eqref{LVD4}, need not be finite.  In general, the isotropy group of $d$ may contain an automorphism $\rho$ such that the $\rho(x_{i})$'s are not linear polynomials in $K[x_1, x_2, x_3, x_4]$. When $s_{1}$ holds, we show that the isotropy group of $d$ is infinite and it contains non-linear elements. Next two propositions help us to compute non-linear automorphisms in the isotropy group of $d$.

\begin{proposition}\label{4variable}
Let $d$ be the derivation as defined in equation \eqref{LVD4} and let $\rho \in \text{Aut}(K[X])_{d}$.  If $s_{1}$ holds, then $\rho(x_{1}+x_{3})=x_{1}+x_{3}$ and $\rho(x_{2}+x_{4})=x_{2}+x_{4}$ or $\rho(x_{1}+x_{3})= x_{2}+x_{4}$ and $\rho(x_{2}+x_{4})=x_{1}+x_{3}$. 
\end{proposition}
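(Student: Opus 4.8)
The plan is to track the two alternating sums $u := x_1 + x_3$ and $v := x_2 + x_4$ instead of the individual variables. The identity that drives everything is obtained by a one-line computation when all $C_i = -1$: since $d(x_1) = x_1(x_4+x_2)$ and $d(x_3) = x_3(x_2+x_4)$, we get
$$d(u) = x_1(x_4+x_2) + x_3(x_2+x_4) = (x_1+x_3)(x_2+x_4) = uv,$$
and symmetrically $d(v) = (x_2+x_4)(x_1+x_3) = uv$. Hence $d(u) = d(v) = uv$, so $w := u - v = x_1 - x_2 + x_3 - x_4$ lies in $\ker d$. Two structural facts should be recorded at the outset. Because $C_2 = -1$, Lemma \ref{totaldegree} with $i=2$ says $\rho(x_1 - C_2 x_3) = \rho(u)$ has total degree $1$, and with $i=1$ it says $\rho(x_4 - C_1 x_2) = \rho(v)$ has total degree $1$, so both $U := \rho(u)$ and $V := \rho(v)$ are affine-linear. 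Moreover, a direct expansion of $d\left(\sum a_i x_i\right)$ into the four adjacent monomials $x_i x_{i+1}$ forces $a_1 = -a_2 = a_3 = -a_4$, so the only homogeneous linear forms in $\ker d$ are the scalar multiples of $w$.

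Next I would apply $\rho$ to the relations above. Using $\rho d = d\rho$ gives $d(U) = \rho(uv) = UV$ and likewise $d(V) = UV$. Since $d$ annihilates constants and raises the degree of a linear form by one, $d(U)$ is homogeneous of degree $2$; matching homogeneous components in $d(U) = UV$, and using that $U$ and $V$ have nonzero linear parts (as $\rho$ is an automorphism and $u,v$ are non-constant), forces the constant terms of both $U$ and $V$ to vanish. Thus $U$ and $V$ are homogeneous linear forms, and $U - V = \rho(w) \in \ker d$ is then a homogeneous linear form in the kernel, so $U - V = c\,w$ for some $c \in K$.

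To finish I would pin down $U$ and $V$. Writing $U = \sum p_i x_i$ and $V = U - c\,w$ (so $q_i = p_i \mp c$ with signs $-,+,-,+$), I expand $d(U) = UV$ in coordinates. Since $d$ of a linear form contains no $x_i^2$, $x_1 x_3$, or $x_2 x_4$ terms, the product $UV$ must have vanishing coefficients there, giving $p_i(p_i \mp c) = 0$ for each $i$ together with the two diagonal relations $2p_1p_3 - c(p_1+p_3) = 0$ and $2p_2p_4 + c(p_2+p_4) = 0$. Solving this small system shows $c \neq 0$ and leaves exactly $U \in \{\,c\,u,\ -c\,v\,\}$, the branches $U = 0$ and $U = c\,w$ being discarded because they force $U = 0$ or $V = 0$. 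Substituting each survivor back into the scalar identity $d(U) = UV$ determines $c$: the case $U = c\,u$ gives $c\,uv = c^2\,uv$, hence $c = 1$ and $(U,V) = (u,v)$; the case $U = -c\,v$ gives $-c\,uv = c^2\,uv$, hence $c = -1$ and $(U,V) = (v,u)$. These are precisely the two asserted alternatives.

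I expect the main obstacle to be the coordinate bookkeeping in the last step, namely keeping the square-term and diagonal-term vanishing conditions straight and correctly eliminating the two degenerate branches, rather than any conceptual difficulty. The conceptual crux, the identity $d(u) = d(v) = uv$ that makes $\{u,v\}$ behave as a stable pair with invariant difference $w$, is immediate once one passes to the alternating-sum coordinates, and it is this that reduces the problem from the full (possibly non-linear) automorphism $\rho$ to a finite classification of linear forms.
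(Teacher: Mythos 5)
Your proposal is correct — I checked the identity $d(x_1+x_3)=d(x_2+x_4)=(x_1+x_3)(x_2+x_4)$, the claim that the linear part of $\ker d$ is spanned by $w=x_1-x_2+x_3-x_4$, the vanishing of the constant terms of $U=\rho(x_1+x_3)$ and $V=\rho(x_2+x_4)$ by matching homogeneous components, the system $p_i(p_i\mp c)=0$ with the two diagonal relations, and the final determination $c=1$ (giving $(U,V)=(u,v)$) or $c=-1$ (giving $(U,V)=(v,u)$); all of it holds. Your route shares its foundations with the paper's proof: both use Lemma \ref{totaldegree} (with $C_1=C_2=-1$) to get that $U$ and $V$ are affine-linear, both rest on the same identity for $d(x_1+x_3)$, and both finish by comparing coefficients in the quadratic relation $d(U)=UV$. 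Where you genuinely diverge is the elimination strategy. The paper treats $U$ and $V$ as two independent affine forms with ten unknown coefficients, derives $a_ib_i=0$ from the $x_i^2$ terms, and then runs a without-loss-of-generality chase (assume $a_1\neq 0$, kill $b_1,b_0,b_3$, then $a_0$, then $a_2,a_4$, then normalize) until the two alternatives fall out by symmetry. You instead exploit the structural facts that $w$ spans the degree-one part of $\ker d$ and that $\rho$ preserves $\ker d$, so that after the homogeneity argument you may write $V=U-cw$; this collapses the problem to the five unknowns $(p_1,\ldots,p_4,c)$, the degenerate branches $U=0$ and $U=cw$ are excluded because $\rho$ is an automorphism, and the two conclusions of the proposition appear as the solutions of the scalar equations $c=c^2$ and $-c=c^2$. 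The paper's argument is pure bookkeeping with no auxiliary observations needed; yours costs one extra (easily verified) computation of the linear kernel but buys a shorter case analysis and a conceptual explanation of why the pair $\{x_1+x_3,\,x_2+x_4\}$ must be permuted: its difference is, up to scalar, the unique linear form annihilated by $d$, hence is forced by $\rho$ onto a scalar multiple of itself.
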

\begin{proof}
From Lemma \ref{totaldegree}, $\rho(x_{1}+x_{3})$ and $\rho(x_{2}+x_{4})$ are polynomials of total degree $1$. Let $\rho(x_{1}+x_{3})= a_{0}+\sum_{j=1}^4 a_{j}x_{j}$, and $\rho(x_{2}+x_{4})=b_{0}+\sum_{j=1}^4 b_{j}x_{j}$ where $a_{i},b_{i} \in K$ for $0 \leq i \leq 4$. 
Then, $d(\rho(x_{1}+x_{3}))=\rho(d(x_{1}+x_{3}))$ gives 
\begin{align*}
 d( a_{0}+\sum_{j=1}^4 a_{j}x_{j})= \rho((x_{1}+x_{3})(x_{2}+x_{4})).
 \end{align*}
This implies 
 \begin{align}\label{4variable1}
(a_{1}x_{1}+a_{3}x_{3})(x_{2}+x_{4})+(a_{2}x_{2}+a_{4}x_{4})(x_{1}+x_{3})=( a_{0}+\sum_{j=1}^4 a_{j}x_{j})( b_{0}+\sum_{j=1}^4 b_{j}x_{j}).
        \end{align}
By comparing the coefficient of $x_{i}^{2}$ in the above equation, we have $a_{i}b_{i}=0$ for $0 \leq i \leq 4$. Since $\rho$ is an automorphism, $a_{i} \neq 0$ for some $i \geq 1$. Let $a_{1} \neq 0$. Then $b_{1}=0$, and by comparing the coefficients of $x_{1}$ and $x_{1}x_{3}$, we have $b_{0}=0$ and $b_{3}=0$, respectively. So, $\rho(x_{2}+x_{4})=b_{2}x_{2}+b_{4}x_{4}$. \\
Similarly, by comparing the coefficients of $x_{2}$ and $x_{4}$, we have $a_{0}b_{2}=0$ and $a_{0}b_{4}=0$, respectively. As $b_{2}$ and $b_{4}$ cannot be zero together, $a_{0}=0$. By comparing the coefficients of $x_{2}^{2}$, $x_{4}^{2}$, and $x_{2}x_{4}$, we get $a_{2}=a_{4}=0$, and equation \eqref{4variable1} reduces to
\begin{center}
    $(a_{1}x_{1}+a_{3}x_{3})(x_{2}+x_{4})=(a_{1}x_{1}+a_{3}x_{3})(b_{2}x_{2}+b_{4}x_{4})$.
\end{center}
The above equation gives $b_{2}=b_{4}=1$. Now, the relation 
$d(\rho(x_{2}+x_{4}))=\rho(d(x_{2}+x_{4}))$ gives 
\begin{center}
    $(x_{1}+x_{3})(x_{2}+x_{4})=(a_{1}x_{1}+a_{3}x_{3})(x_{2}+x_{4})$.
\end{center}
Thus, we have $a_{1}=a_{3}=1$ and  $\rho(x_{1}+x_{3})=x_{1}+x_{3}$ and $\rho(x_{2}+x_{4})=x_{2}+x_{4}$. 
If $a_{3} \neq 0$, we obtain the same $\rho$. \newline 
Similarly, one can show that if either $a_{2}$ or $a_{4}$ is non-zero, then $\rho(x_{1}+x_{3})= x_{2}+x_{4}$ and $\rho(x_{2}+x_{4})=x_{1}+x_{3}$.
    \end{proof}

\begin{proposition}\label{propinf}
Let $d$ be the derivation as defined in equation \eqref{LVD4} and let $\rho \in \text{Aut}(K[X])_{d}$. Let $s_1$ hold and $\rho(x_{1}+x_{3})=x_{1}+x_{3}$, $\rho(x_{2}+x_{4})=x_{2}+x_{4}$.
 Then $\rho(x_{2}) \in K[x_1+x_3, x_2, x_4]$.\newline 
 Furthermore, 
$\rho(x_{2})=x_{2} \overline{c_{20}}+\sum_{j=1}^{n_{2}} c_{2j}x_{4}^{j}$, where $\overline{c_{20}},c_{2j} \in K[x_{2},x_{1}+x_{3}] $ for all $j\geq 1$  and $x_{2} \nmid \overline{c_{20}}$.
\end{proposition}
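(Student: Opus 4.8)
The plan is to reduce everything to a single eigenvalue equation for $P:=\rho(x_2)$ and then exploit the bigrading on $K[X]$ coming from the splitting of the variables into $\{x_1,x_3\}$ and $\{x_2,x_4\}$. First I would record the identities forced by $s_1$: since $C_i=-1$ for all $i$, one has $d(x_1)=x_1v$, $d(x_3)=x_3v$, $d(x_2)=x_2u$, $d(x_4)=x_4u$, where $u:=x_1+x_3$ and $v:=x_2+x_4$. As $\rho$ commutes with $d$ and, by hypothesis, fixes $u$, the relation $d\rho(x_2)=\rho d(x_2)=\rho(x_2u)=\rho(x_2)\rho(u)$ collapses to the single equation
\begin{equation}
d(P)=u\,P,\qquad P=\rho(x_2).\notag
\end{equation}
This is the object I would analyze.

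For the first assertion I would introduce the bigrading by $(\deg_{13},\deg_{24})$, counting total degree in $x_1,x_3$ and in $x_2,x_4$ respectively. The key observation is that every monomial $m$ of bidegree $(p,q)$ is a $d$-eigenvector, $d(m)=(pv+qu)m$, so the bidegree-$(p,q)$ component $P_{p,q}$ of $P$ satisfies $d(P_{p,q})=(pv+qu)P_{p,q}$. Writing out $d(P)=uP$ and comparing bidegree-$(p',q')$ parts (noting $u,v$ have bidegrees $(1,0),(0,1)$) yields $P_{p,0}=0$ for all $p$ together with the recursion $p\,v\,P_{p,q}=-q\,u\,P_{p-1,q+1}$ for $q\ge1$. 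Fixing the total degree $n=p+q$ and setting $Q_p:=P_{p,n-p}$, this becomes the one-step chain $p\,v\,Q_p=-(n-p)\,u\,Q_{p-1}$ with $Q_n=0$; since $u,v$ are coprime, solving the chain forces $v^{\,n-1}\mid Q_0$, hence $Q_0=v^{\,n-1}L$ with $L=\alpha x_2+\beta x_4$, and then $Q_p=(-1)^p\binom{n-1}{p}u^{p}v^{\,n-1-p}L$. Summing over $p$ and applying the binomial theorem gives
\begin{equation}
P^{(n)}=L\,(v-u)^{\,n-1}.\notag
\end{equation}
Because $v-u=x_2+x_4-u$ and $L$ both lie in $K[u,x_2,x_4]=K[x_1+x_3,x_2,x_4]$, every homogeneous piece of $P$ lies there, proving $\rho(x_2)\in K[x_1+x_3,x_2,x_4]$.

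For the ``Furthermore'' part I would work inside the $d$-stable subalgebra $R:=K[u,x_2,x_4]$, on which $d(u)=u(x_2+x_4)$, $d(x_2)=ux_2$, $d(x_4)=ux_4$. Expanding $P=\sum_{j\ge0}c_{2j}x_4^{j}$ with $c_{2j}\in K[u,x_2]$ (legitimate by the first part) already identifies the $c_{2j}$ as elements of $K[x_2,x_1+x_3]$. To control $c_{20}=P|_{x_4=0}$ I would use that $(x_4)$ is $d$-stable ($d(x_4)=ux_4$): reducing $d(P)=uP$ modulo $x_4$ gives $\bar d(c_{20})=u\,c_{20}$ in $K[u,x_2]$ with $\bar d(u)=\bar d(x_2)=ux_2$. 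Since $(x_2)$ is $\bar d$-stable and the induced derivation on $K[u,x_2]/(x_2)=K[u]$ is zero, reducing once more modulo $x_2$ forces $u\cdot(c_{20}\bmod x_2)=0$, i.e. $x_2\mid c_{20}$. Writing $c_{20}=x_2^{k}\overline{c_{20}}$ with $x_2\nmid\overline{c_{20}}$ and substituting into $\bar d(c_{20})=u\,c_{20}$, the same modulo-$x_2$ computation pins down $k=1$, yielding the stated form $\rho(x_2)=x_2\overline{c_{20}}+\sum_{j\ge1}c_{2j}x_4^{j}$.

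The main obstacle is the first assertion: one must show that $x_1,x_3$ enter $\rho(x_2)$ only through $x_1+x_3$, with the sole leverage being the scalar equation $d(P)=uP$. The eigen-monomial identity $d(m)=(pv+qu)m$ is what makes the bigraded recursion tractable, and the delicate point is extracting the exact solution of the chain, where coprimality of $u$ and $v$ forces the high divisibility $v^{\,n-1}\mid Q_0$ and hence the closed form $L(v-u)^{\,n-1}$. Everything afterward, including the normalization in the ``Furthermore'' clause, is routine descent through the $d$-stable ideals $(x_4)$ and $(x_2)$.
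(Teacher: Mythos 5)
Your proposal is correct, and it takes a genuinely different route from the paper's proof. Both arguments start from the same collapsed equation $d(\rho(x_2))=(x_1+x_3)\rho(x_2)$, but the paper expands $\rho(x_2)=\sum_j c_{2j}x_4^j$ with $c_{2j}\in K[x_1,x_2,x_3]$ from the outset and attacks the resulting system of partial differential equations coefficient by coefficient: it first extracts $x_2\mid c_{20}$ and $r=1$ by setting $x_2=0$, then proves $\overline{c_{20}}\in K[x_2,x_1+x_3]$ (Claim 1) and $c_{2j}\in K[x_2,x_1+x_3]$ (Claim 2) by two separate inductions resting on the Euler identity $x_1\partial_{x_1}g_j+x_3\partial_{x_3}g_j=jg_j$; the membership $\rho(x_2)\in K[x_1+x_3,x_2,x_4]$ only emerges at the end as the aggregate of these claims. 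You reverse the logical order: writing $u=x_1+x_3$, $v=x_2+x_4$, $P=\rho(x_2)$, your eigen-monomial identity $d(m)=(pv+qu)m$ is precisely the structural fact that makes the paper's PDEs solvable, and it turns $d(P)=uP$ into the two-term chain $pvQ_p=-(n-p)uQ_{p-1}$, which coprimality of $u,v$ solves in closed form, $P^{(n)}=L(v-u)^{n-1}$. This buys strictly more than the proposition asserts: a complete classification of solutions of $d(P)=uP$, which makes the non-linear automorphisms of Theorem \ref{infinite} transparent and renders the ``Furthermore'' clause nearly free, since $c_{2j}\in K[x_2,x_1+x_3]$ follows from uniqueness of the expansion in $K[u,x_2][x_4]$ rather than a separate induction, while your descent through the $d$-stable ideals $(x_4)$ and $(x_2)$ is the paper's ``set $x_2=0$'' computation in equation \eqref{4LV2} in more conceptual dress. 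One caveat you share with the paper: writing $c_{20}=x_2^k\overline{c_{20}}$ with $x_2\nmid\overline{c_{20}}$ implicitly assumes $c_{20}\neq 0$, yet the automorphism fixing $x_1,x_3$ and swapping $x_2\leftrightarrow x_4$ lies in the isotropy group, fixes $x_1+x_3$ and $x_2+x_4$, and has $c_{20}=0$, so that factorization claim should be read as conditional on $c_{20}\neq 0$.
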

\begin{proof}
Let $\rho(x_{2})=\sum_{j=0}^{n_{2}} c_{2j}x_{4}^{j}$, where $c_{2j} \in K[x_{1},x_{2},x_{3}] $ for all $j\geq 0$. 
The relation $d\rho(x_{2})= \rho d(x_{2})$ gives 
\begin{eqnarray}
    &x_{1}(x_{2}+x_{4})(\frac{\partial c_{20}}{\partial x_{1}}+\cdots+\frac{\partial c_{2n_{2}}}{\partial x_{1}}x_{4}^{n_{2}})+x_{2}(x_{1}+x_{3})(\frac{\partial c_{20}}{\partial x_{2}}+\cdots+\frac{\partial c_{2n_{2}}}{\partial x_{2}}x_{4}^{n_{2}})\notag \\ \notag& +x_{3}(x_{2}+x_{4})(\frac{\partial c_{20}}{\partial x_{3}}+\cdots+\frac{\partial c_{2n_{2}}}{\partial x_{3}}x_{4}^{n_{2}})+x_{4}(x_{1}+x_{3})(c_{21}+\cdots+n_{2}c_{2n_{2}}x_{4}^{n_{2}-1}) 
\notag \\&=(c_{20}+\cdots+ c_{2n_{2}}x_{4}^{n_{2}})(x_{1}+x_{3}). \label{4LV}
\end{eqnarray}
By comparing the constant term in equation \eqref{4LV}, we have 
\begin{eqnarray}
    x_{1}x_{2}\frac{\partial c_{20}}{\partial x_{1}}+x_{2}(x_{1}+x_{3})\frac{\partial c_{20}}{\partial x_{2}}+x_{2}x_{3}\frac{\partial c_{20}}{\partial x_{3}}=c_{20}(x_{1}+x_{3}). \label{4LV1}
\end{eqnarray}
Observe that $x_{2} \mid L.H.S.$, this implies $x_{2} \mid c_{20}$. Let $c_{20}=x_{2}^{r}\overline{c_{20}}$, where $\overline{c_{20}} \in K[x_{1},x_{2},x_{3}]$ and $x_{2} \nmid \overline{c_{20}}$. By substituting this $c_{20}$ in equation \eqref{4LV1}, we have 
\begin{eqnarray}
    x_{1}x_{2}\frac{\partial (x_{2}^{r}\overline{c_{20}})}{\partial x_{1}}+x_{2}(x_{1}+x_{3})\frac{\partial (x_{2}^{r}\overline{c_{20}})}{\partial x_{2}}+x_{2}x_{3}\frac{\partial (x_{2}^{r}\overline{c_{20}})}{\partial x_{3}}=(x_{2}^{r}\overline{c_{20}})(x_{1}+x_{3}). \notag
\end{eqnarray}
This can be further reduced to
\begin{eqnarray}
 x_{2}(x_{1}\frac{\partial \overline{c_{20}}}{\partial x_{1}}+x_{3}\frac{\partial \overline{c_{20}}}{\partial x_{3}})+(x_{1}+x_{3})(x_{2}\frac{\partial \overline{c_{20}}}{\partial x_{2}}+r\overline{c_{20}})=(x_{1}+x_{3})\overline{c_{20}}. \label{4LV2}   
\end{eqnarray}
For $x_{2}=0$, the above equation gives $r\overline{c_{20}}(x_{1},0,x_{3})=\overline{c_{20}}(x_{1},0,x_{3})$. Hence, $r=1$ as $x_{2} \nmid \overline{c_{20}}$. This implies $c_{20}=x_{2}\overline{c_{20}}$, and from equation \eqref{4LV2}, we have 
\begin{eqnarray}
 x_{1}\frac{\partial \overline{c_{20}}}{\partial x_{1}}+x_{3}\frac{\partial \overline{c_{20}}}{\partial x_{3}}+(x_{1}+x_{3})\frac{\partial \overline{c_{20}}}{\partial x_{2}}=0. \label{4LV3}  
\end{eqnarray}
\textbf{Claim 1:}   $\overline{c_{20}} \in K[x_{2},x_{1}+x_{3}]$. \\
Let $\overline{c_{20}}=\sum_{j=0}^{s} f_{j}x_{2}^{j}$, where $s \geq 0$,  $f_{j} \in K[x_{1},x_{3}]$ and $f_{s} \neq 0$. By substituting this $\overline{c_{20}}$ in equation \eqref{4LV3}, we have
\begin{eqnarray}
x_{1}(\frac{\partial f_{0}}{\partial x_{1}}+\cdots+\frac{\partial f_{s}}{\partial x_{1}}x_{2}^{s}) +x_{3}(\frac{\partial f_{0}}{\partial x_{3}}+\cdots+\frac{\partial f_{s}}{\partial x_{3}}x_{2}^{s})+(x_{1}+x_{3})(f_{1}+\cdots+sf_{s}x_{2}^{s-1})=0. \label{4LV4} 
\end{eqnarray}
By comparing the coefficient of $x_{2}^{i}$ in equation \eqref{4LV4}, we have 
\begin{eqnarray}
& x_{1}\frac{\partial f_{s}}{\partial x_{1}}+x_{3}\frac{\partial f_{s}}{\partial x_{3}}=0 \hspace{1cm}& \text{for}~ i=s, \label{4LV5}\\ 
& & \notag \\
\text{and}  & x_{1}\frac{\partial f_{i}}{\partial x_{1}}+x_{3}\frac{\partial f_{i}}{\partial x_{3}}=-(i+1) (x_{1}+x_{3})f_{i+1} \hspace{1cm}& \text{for}~  0 \leq i \leq s-1   \label{4LV6}. 
\end{eqnarray}
Equation \eqref{4LV5} gives $f_{s} \in K$. Let $f_{s} = a_s$.\\
For $i=s-1$, from equation \eqref{4LV6}, we have
\begin{equation}
x_{1}\frac{\partial f_{s-1}}{\partial x_{1}}+x_{3}\frac{\partial f_{s-1}}{\partial x_{3}}=-s (x_{1}+x_{3})a_{s}.\label{4LV7}    
\end{equation}
Let $f_{s-1}=\sum_{j=0}^{p} g_{j}$, where $g_{j}$ is a homogeneous polynomial of degree $j$. We know for any form $g_{j}$ of total degree $j$
\begin{equation}
x_{1}\frac{\partial g_{j}}{\partial x_{1}}+x_{3}\frac{\partial g_{j}}{\partial x_{3}}=jg_{j}.\label{4LV8}   
\end{equation}
Thus, from equations \eqref{4LV7} and \eqref{4LV8}, we have $\sum_{j=1}^{p} jg_{j}=-s(x_{1}+x_{3})a_{s}$. This implies $j=1$ and $g_{1}=-s(x_{1}+x_{3})f_{s}$. So, $f_{s-1}=-s(x_{1}+x_{3})a_{s}+ a_{s-1}$, where $a_{s-1}(=g_{0}),a_{s} \in K$. \par
For $i=s-2$, from equation \eqref{4LV6}, we have
\begin{align}
x_{1}\frac{\partial f_{s-2}}{\partial x_{1}}+x_{3}\frac{\partial f_{s-2}}{\partial x_{3}}&=-(s-1) (x_{1}+x_{3})f_{s-1}\notag  \\ &=a_{s} s(s-1)(x_{1}+x_{3})^{2}-a_{s-1}(s-1)(x_{1}+x_{3})    .\label{4LV9}    
\end{align} 
Again, by using the argument as above, we get $f_{s-2}=a_{s}\frac{s(s-1)}{2}(x_{1}+x_{3})^{2}-a_{s-1}(s-1)(x_{1}+x_{3})+a_{s-2}$, where $a_{s-2} \in K$. Indeed, by induction, for $0\leq i \leq s$, we have 
\begin{align}
f_{s-i}= (-1)^{i} \binom{s}{i}a_{s}(x_{1}+x_{3})^{i}+ &(-1)^{i-1} \binom{s-1}{i-1}a_{s-1}(x_{1}+x_{3})^{i-1}+\cdots \notag \\& + (-1) \binom{s-(i-1)}{1}a_{s-i+1}(x_{1}+x_{3})+a_{s-i}, \label{4LV10}
\end{align}
where $a_{s-i} \in K$. Thus $\overline{c_{20}} \in K[x_{2},x_{1}+x_{3}]$.
\newline 
\textbf{Claim 2:} ~ $c_{2j} \in K[x_{2},x_{1}+x_{3}]$ for $j \geq 1$.\newline
By comparing the coefficient of $x_{4}^{n_{2}+1}$ in equation \eqref{4LV}, we have
\begin{equation}
x_{1}\frac{\partial c_{2n_{2}} }{\partial x_{1}}+x_{3}\frac{\partial c_{2n_{2}} }{\partial x_{3}}=0.
\end{equation}
The above equation gives $c_{2n_{2}} \in K[x_{2}]$. \\
Now, by comparing the coefficient of $x_{4}^{n_{2}}$ in equation \eqref{4LV}, we have
\begin{align}
x_{2}(x_{1}+x_{3})\frac{\partial c_{2n_{2}}}{\partial x_{2}}+ x_{1}\frac{\partial c_{2(n_{2}-1)} }{\partial x_{1}}+x_{3}\frac{\partial c_{2(n_{2}-1)} }{\partial x_{3}}+ n_{2}(x_{1}+x_{3})c_{2n_{2}}=c_{2n_{2}}(x_{1}+x_{3}). \notag   \end{align}
The above equation can be rewritten as 
\begin{equation}
  x_{1}\frac{\partial c_{2(n_{2}-1)} }{\partial x_{1}}+x_{3}\frac{\partial c_{2(n_{2}-1)} }{\partial x_{3}}=-(x_{1}+x_{3})(x_{2}\frac{\partial c_{2n_{2}}}{\partial x_{2}}+(n_{2}-1)c_{2n_{2}}).\label{4LV11}    
\end{equation}
Observe that $x_{2}\frac{\partial c_{2n_{2}}}{\partial x_{2}}+(n_{2}-1)c_{2n_{2}} \in K[x_{2}]$. Let $c_{2(n_{2}-1)}=\sum_{j=0}^{p} h_{j(n_{2}-1)}$, where $h_{j(n_{2}-1)} \in K[x_{2}][x_{1},x_{3}]$, is a homogeneous polynomial of degree $j$ with coefficients from $K[x_{2}]$. 
Then we have $$\sum_{j=1}^{p} jh_{j(n_{2}-1)}=-(x_{2}\frac{\partial c_{2n_{2}}}{\partial x_{2}}+(n_{2}-1)c_{2n_{2}})(x_{1}+x_{3}).$$ This implies $j=1$ and 
 $c_{2(n_{2}-1)}= \overline{h}_{0(n_{2}-1)}+\overline{h}_{1(n_{2}-1)} (x_{1}+x_{3}) $ $\in K[x_{2},x_{1}+x_{3}]$, where $\overline{h}_{0(n_{2}-1)}=h_{0(n_{2}-1)}$ and $\overline{h}_{1(n_{2}-1)}=-(x_{2}\frac{\partial c_{2n_{2}}}{\partial x_{2}}+(n_{2}-1)c_{2n_{2}})$. \newline
Now, by induction, one can show that whenever $c_{2i} \in K[x_{2},x_{1}+x_{3}]$, then $c_{2(i-1)} \in K[x_{2},x_{1}+x_{3}]$. Hence,  $c_{2j} \in K[x_{2},x_{1}+x_{3}]$ for all $j$.
\end{proof}

From Proposition \ref{propinf}, $\rho(x_{4})=x_{2}+x_{4}-\rho(x_{2}) \in K[x_{1}+x_{3},x_{2}, x_{4}]$. Using a similar argument 
one can also observe that $\rho(x_{1}),~ \rho(x_{3}) \in K[x_{1},x_{3},x_{2}+x_{4}]$. Further, by symmetry, it is easy to observe that if $\rho(x_{1}+x_{3})=x_{2}+x_{4}$ and $\rho(x_{2}+x_{4})=x_{1}+x_{3}$, then $\rho(x_{1}),~\rho(x_{3}) \in K[x_{1}+x_{3},x_{2},x_{4}]$ and $\rho(x_{2}),~\rho(x_{4}) \in K[x_{1},x_{3},x_{2}+x_{4}]$.  
Now, we show that if $s_1$ holds, then the isotropy group of $d$ contains non-linear automorphisms. 

\begin{theorem}\label{infinite}
 Let $d$ be the derivation as defined in equation \eqref{LVD4}. If $s_{1}$ holds, then the isotropy group of $d$ is not finite.   
\end{theorem}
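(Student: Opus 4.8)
The plan is to prove non-finiteness constructively, by exhibiting an explicit infinite one-parameter family of (non-linear) automorphisms that commute with $d$; the engine behind the construction is a first integral of $d$. Specializing equation \eqref{LVD4} to $s_1$ (all $C_i=-1$) gives $d(x_1)=x_1(x_2+x_4)$, $d(x_2)=x_2(x_1+x_3)$, $d(x_3)=x_3(x_2+x_4)$ and $d(x_4)=x_4(x_1+x_3)$. Writing $u=x_1+x_3$ and $v=x_2+x_4$, one reads off $d(u)=d(v)=uv$, so that $d(x_1)=vx_1$, $d(x_3)=vx_3$, $d(x_2)=ux_2$, $d(x_4)=ux_4$, and, crucially, $I:=u-v=x_1+x_3-x_2-x_4$ satisfies $d(I)=0$. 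Thus $I$ is a constant of $d$, and for every constant $g$ (i.e. $d(g)=0$) the products $x_1g,x_3g$ again satisfy $d(\cdot)=v(\cdot)$, while $x_2g,x_4g$ satisfy $d(\cdot)=u(\cdot)$. By Proposition \ref{4variable} any $\rho$ in the isotropy group fixes or swaps the pair $\{u,v\}$, and the family below lives in the "fix" case.

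With this in hand I would define, for each $\mu\in K$, the map $\rho_\mu$ by
\[
\rho_\mu(x_1)=x_1+\mu\,uI,\quad \rho_\mu(x_3)=x_3-\mu\,uI,\quad \rho_\mu(x_2)=x_2,\quad \rho_\mu(x_4)=x_4,
\]
where $uI=(x_1+x_3)(x_1+x_3-x_2-x_4)$. Two things must be checked. First, $\rho_\mu$ is a $K$-algebra automorphism: it fixes $x_2,x_4$ and $x_1+x_3$, hence fixes $I$ and the whole coefficient $uI$, so it is the shear adding $\pm\mu\,uI$ to $x_1,x_3$, with inverse $\rho_{-\mu}$. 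Second, $\rho_\mu$ commutes with $d$: since $d(uI)=d(u)I+u\,d(I)=uvI$, we get $d(\rho_\mu(x_1))=vx_1+\mu uvI=v\,\rho_\mu(x_1)=\rho_\mu(x_1)\rho_\mu(v)=\rho_\mu(d(x_1))$, using $\rho_\mu(v)=v$; the computation for $x_3$ is identical, and for $x_2,x_4$ it is immediate because $\rho_\mu(u)=u$. Hence $\rho_\mu\in\text{Aut}(K[X])_d$ for every $\mu\in K$.

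Finally, since $\rho_\mu(x_1)-\rho_{\mu'}(x_1)=(\mu-\mu')\,uI\neq 0$ whenever $\mu\neq\mu'$ and $K$ is infinite, the family $\{\rho_\mu\}_{\mu\in K}$ is an infinite subset of the isotropy group, which is therefore not finite; moreover each $\rho_\mu$ with $\mu\neq0$ is genuinely non-linear and respects the structure forced by Propositions \ref{4variable} and \ref{propinf} (it fixes $u$ and $v$, and $\rho_\mu(x_2)\in K[x_1+x_3,x_2,x_4]$ trivially). I do not anticipate any serious obstacle in the verification itself; the only real step is the structural insight that $u-v$ is a first integral, which is precisely what turns a lone linear symmetry into an infinite family — indeed one may replace $uI$ by $uI^{k}$ for any $k\geq 1$ (or symmetrically modify $x_2,x_4$ by multiples of $vI^{k}$) to produce non-linear symmetries of arbitrarily high degree.
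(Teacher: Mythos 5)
Your proof is correct, and at the structural level it is the same as the paper's: both arguments simply exhibit an explicit one-parameter family inside $\text{Aut}(K[X])_{d}$ and invoke the infinitude of $K$. The difference lies in the family and in how transparently it is justified. The paper fixes $x_{1},x_{3}$ and deforms $x_{2},x_{4}$, giving a linear family $\sigma(x_{2})=c_{2}x_{2}+c_{4}x_{4}$, $\sigma(x_{4})=(1-c_{2})x_{2}+(1-c_{4})x_{4}$ together with a non-linear one, and leaves the verification as ``easy to observe'' (it also omits the condition $c_{2}\neq c_{4}$ needed for $\sigma$ to be invertible). You instead fix $x_{2},x_{4}$ and shear $x_{1},x_{3}$ by multiples of $uI$, where $I=x_{1}+x_{3}-x_{2}-x_{4}$ satisfies $d(I)=0$; this is the mirror-image construction, and in fact the paper's non-linear element uses the same hidden device, since its $\rho(x_{2})$ can be rewritten as $\alpha x_{2}-x_{4}I$ and its $\rho(x_{4})$ as $(1-\alpha)x_{2}+x_{4}+x_{4}I$. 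What your formulation buys is conceptual clarity: isolating the first integral $I$ explains at a glance why the family commutes with $d$ (via $d(uI)=uvI$ and $\rho_{\mu}(v)=v$), makes the automorphism check immediate (inverse $\rho_{-\mu}$, since $\rho_{\pm\mu}$ fix the shear coefficient $uI$), and generalizes on the spot (replacing $uI$ by $uI^{k}$) to produce symmetries of arbitrarily high degree, which strengthens the paper's surrounding remark that the isotropy group contains non-linear elements. Your verification is complete and I see no gaps.
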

 \begin{proof}
Consider linear automorphism $\sigma$ and non-linear automorphism $\rho$ given by:
$$\begin{aligned}
& \sigma(x_{1})=x_{1}, &&   \rho(x_{1})=x_{1},\\
&\sigma(x_{2})=c_{2}x_{2}+c_{4}x_{4}, && \rho(x_{2})=\alpha x_{2}+(x_{2}-(x_{1}+x_{3}))x_{4}+x_{4}^{2},\\
&\sigma(x_{3})=x_{3}, && \rho(x_{3})=x_{3},\\
&~\rho(x_{4})=(1-c_{2})x_{2}+(1-c_{4})x_{4}, && \rho(x_{4})=(1-\alpha)x_{2}+(1+x_{1}+x_{3}-x_{2})x_{4}-x_{4}^{2}.
\end{aligned}$$

where $\alpha, c_i \in K$. Then, $\sigma, ~ \rho \in \text{Aut}(K[X])_{d}$. Since $K$ is infinite, the isotropy group of $d$ contains infinitely many elements. 
\end{proof}

\section{\textbf{ Isotropy group of Lotka-Volterra derivation of \texorpdfstring{$K[x_{1},\ldots,x_{n}]$}{} }}

In this section, we study the isotropy group of Lotka-Volterra derivations of $K[x_1, \ldots, x_n]$, where $n \geq 5$. 
Let
\begin{equation}\label{5plusV}
d=x_{1}(x_{n}-C_{1}x_{2})\frac{\partial }{\partial x_{1}}+x_{2}(x_{1}-C_{2}x_{3})\frac{\partial }{\partial x_{2}}+\cdots+x_{n}(x_{n-1}-C_{n}x_{1})\frac{\partial }{\partial x_{n}},
\end{equation}
where $C_i \in K$ for all $ i \geq 1$. It is shown that the isotropy group of $d$ is finite for all values of $C_{i}$'s.  
First, we consider the case when $C_{i}$'s are non-zero scalars. 

\begin{theorem}\label{maintheorem}
Let $d$ be the Lotka-Volterra derivation as defined in equation \eqref{5plusV}. If $C_{i} \in K^{*}$, then the isotropy group of $d$ is finite.
\end{theorem}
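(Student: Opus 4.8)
The plan is to reduce an arbitrary $\rho \in \text{Aut}(K[X])_d$ first to a linear map, then to a monomial (scaled permutation) automorphism, and finally to show that only finitely many monomial automorphisms can commute with $d$. Since $n \geq 5 \neq 4$ and $C_i \in K^{*}$, Lemma \ref{Importantlemma} applies to every index (equivalently, Corollary \ref{reducedtotaldegree} applies), so each $\rho(x_i)$ is a polynomial of total degree $1$. Thus I would write $\rho(x_i) = c_i + L_i$ with $L_i = \sum_{j=1}^{n} c_{ij} x_j$ and $c_i, c_{ij} \in K$.

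Next I would feed this into the commutation relation $d\rho(x_i) = \rho d(x_i) = \rho(x_i)\,\rho(x_{i-1} - C_i x_{i+1})$. Setting $M_i := L_{i-1} - C_i L_{i+1}$ for the linear part of $\rho(x_{i-1}-C_i x_{i+1})$, the left-hand side $d(\rho(x_i)) = \sum_j c_{ij} x_j(x_{j-1} - C_j x_{j+1})$ is homogeneous of degree $2$, whereas the right-hand side is a product of two affine-linear forms. Comparing homogeneous components forces the degree-$0$ and degree-$1$ parts of the right-hand side to vanish, and in degree $2$ yields the identity $L_i M_i = \sum_j c_{ij} x_j(x_{j-1} - C_j x_{j+1})$. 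The decisive feature of the right-hand side is that it contains no square $x_j^2$ and only cyclically adjacent cross terms $x_j x_{j\pm 1}$.

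The core of the argument is a support analysis of this identity. Comparing coefficients of $x_j^2$ gives $c_{ij}(c_{(i-1)j} - C_i c_{(i+1)j}) = 0$, i.e. $\operatorname{supp}(L_i) \cap \operatorname{supp}(M_i) = \emptyset$; because the supports are disjoint, the coefficient $a_j b_k + a_k b_j$ of $x_j x_k$ in $L_i M_i$ reduces to a single nonzero product $a_j b_k$ whenever $j \in \operatorname{supp}(L_i)$, $k \in \operatorname{supp}(M_i)$, and so cannot cancel. Hence every such pair $(j,k)$ must be cyclically adjacent. Now $M_i \neq 0$ (since $\rho(x_{i-1}-C_i x_{i+1}) \notin K$, of degree exactly $1$ by Lemma \ref{totaldegree}), and for $n \geq 5$ any two distinct indices have \emph{at most one} common neighbour on the $n$-cycle — this is precisely the property that fails for $n=4$, where opposite vertices share two neighbours and produce the infinite families. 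Combining this adjacency restriction with the invertibility of the coefficient matrix $(c_{ij})$ and the coupling $M_i = L_{i-1} - C_i L_{i+1}$ across all $i$ should force each $L_i$ to be a single nonzero scalar multiple of one variable, say $L_i = \lambda_i x_{\sigma(i)}$ with $\lambda_i \in K^{*}$ and $i \mapsto \sigma(i)$ a permutation of $\mathbb{Z}_n$; revisiting the degree-$0$ and degree-$1$ parts then forces all $c_i = 0$, so $\rho(x_i) = \lambda_i x_{\sigma(i)}$ is monomial. I expect this monomial reduction — specifically, ruling out the two-element supports $\{k-1,k+1\}$ that the degree-$2$ identity alone still permits — to be the main obstacle, and the hypothesis $n \geq 5$ to be exactly what closes it.

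Once $\rho$ is monomial, the finiteness is clean. Cancelling $\lambda_i x_{\sigma(i)}$ in $d\rho(x_i)=\rho d(x_i)$ leaves $x_{\sigma(i)-1} - C_{\sigma(i)} x_{\sigma(i)+1} = \lambda_{i-1} x_{\sigma(i-1)} - C_i \lambda_{i+1} x_{\sigma(i+1)}$. Matching the variables on both sides forces $\{\sigma(i)-1,\sigma(i)+1\} = \{\sigma(i-1),\sigma(i+1)\}$ for every $i$, so $\sigma$ preserves cyclic adjacency; that is, $\sigma$ is an automorphism of the $n$-cycle and hence lies in the dihedral group $D_{2n}$ of order $2n$. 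For each of these finitely many permutations, matching coefficients in the same relation pins down the scalars $\lambda_i$ in terms of the fixed $C_i$, leaving at most finitely many admissible tuples $(\lambda_i)$. Therefore $\text{Aut}(K[X])_d$ is finite, as claimed.
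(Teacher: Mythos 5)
Your skeleton is the same as the paper's: reduce to total degree $1$ via Corollary \ref{reducedtotaldegree}, extract coefficient identities from $d\rho(x_i)=\rho d(x_i)$ (your square and non-adjacent cross-term conditions are exactly equations \eqref{nvariable2} and \eqref{nvariable3} of the paper, phrased as disjointness of $\mathrm{supp}(L_i)$ and $\mathrm{supp}(M_i)$ plus the adjacency restriction), then reduce to monomial automorphisms and count. Your endgame is correct and in fact tidier than the paper's: once $\rho(x_i)=\lambda_i x_{\sigma(i)}$, cancelling $\lambda_i x_{\sigma(i)}$ and matching variables forces $\sigma$ to be an automorphism of the $n$-cycle and pins each $\lambda_i$ in terms of the $C_i$, which reproduces the paper's description $\text{Aut}(K[X])_d=S_1\cup S_2$ with $|S_1|,|S_2|\le n$.

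The genuine gap is the middle step that you yourself flag as the ``main obstacle'' and then defer with ``should force'' and ``I expect'': passing from the adjacency restriction to single-variable supports. That step is not a routine verification; it is the core of the theorem, and it is exactly where the paper spends most of its proof (the recursive claim that $c_{it}=0$ for all $i\neq k+1$, followed by Cases 1 and 2 with their subcases). After your adjacency analysis, for $n\ge 5$ two configurations survive for each $i$: either $|\mathrm{supp}(L_i)|=1$, or $\mathrm{supp}(L_i)=\{j-1,j+1\}$ with $\mathrm{supp}(M_i)=\{j\}$ for some $j$, and you give no argument eliminating the second. Note that the gap is closable inside your own framework, and more cheaply than by the invertibility-plus-coupling route you gesture at: the degree-$2$ identity also pins the \emph{adjacent} cross terms, since the coefficient of $x_qx_{q+1}$ on the left-hand side is $c_{i(q+1)}-C_qc_{iq}$. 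In the bad configuration (writing, as you do, $a_q=c_{iq}$ for the coefficients of $L_i$ and $b_q$ for those of $M_i$), compare coefficients of $x_{j-2}x_{j-1}$: on the left this is $a_{j-1}-C_{j-2}a_{j-2}=a_{j-1}\neq 0$, because $j-2\notin\{j-1,j+1\}$ when $n\ge 5$; in $L_iM_i$ it is $a_{j-2}b_{j-1}+a_{j-1}b_{j-2}=0$, because $j-2,\,j-1\notin\mathrm{supp}(M_i)=\{j\}$. This contradiction kills the two-element support, after which your outline becomes a complete (and shorter) proof. As submitted, however, the decisive step is an expectation rather than an argument, so the proposal does not yet prove the statement.
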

  \begin{proof}
Let $\rho \in \text{Aut}(K[X])_{d}$. By Corollary \ref{reducedtotaldegree}, we know that $\rho(x_{i})$'s are polynomials in $K[X]$ of total degree $1$. Let 
$$\rho(x_{i})=c_{i}+\sum_{j=1}^n c_{ij}x_{j},$$
where $c_{i}, c_{ij} \in K$, for all $ i, j \in \mathbb{Z}_{n}$.\newline
For $i \in \mathbb{Z}_{n}$, the relation $d\rho(x_{i})=\rho d(x_{i})$ gives
\begin{equation}\label{nvariable1}
\begin{aligned}
\sum_{j=1}^n c_{ij}x_{j}(x_{j-1}-C_{j}x_{j+1})=
(c_{i}+ \sum_{j=1}^n c_{ij}x_{j}) 
&\big((c_{i-1}+\sum_{j=1}^n c_{(i-1)j}x_{j})-C_{i}(c_{i+1}+\sum_{j=1}^n c_{(i+1)j}x_{j})\big).
\end{aligned}
\end{equation}
By comparing the coefficient of $x_{j}^{2}$  in equation \eqref{nvariable1}, we have 
\begin{eqnarray}
    c_{ij}(c_{(i-1)j}-C_{i}c_{(i+1)j})=0 \hspace{1cm} \forall ~~i, j \in \mathbb{Z}_{n}. \label{nvariable2}
\end{eqnarray}
Further, by comparing the coefficient of $x_{q}x_{r}$, where $1 \leq q,r \leq n$, and $q \neq \{r,r\pm 1\} $ in equation \eqref{nvariable1}, we have 
\begin{eqnarray}
    c_{iq}(c_{(i-1)r}-C_{i}c_{(i+1)r})+c_{ir}(c_{(i-1)q}-C_{i}c_{(i+1)q})=0 \hspace{1cm} \forall ~~ i. \label{nvariable3}
\end{eqnarray}
Also, we know that $\rho(x_{i-1}-C_{i}x_{i+1}) \notin K$ for all $ i \in \mathbb{Z}_{n}$. 
In particular, for $i =k$, 
$\rho(x_{k-1}-C_{k}x_{k+1}) 
=(c_{k-1}-C_{k}c_{k+1})+\sum_{j=1}^{n} (c_{(k-1)j}-C_{k}c_{(k+1)j})x_{j} \notin K$. Hence, there exists a $t \in \mathbb{Z}_{n}$ such that $c_{(k-1)t}-C_{k}c_{(k+1)t} \neq 0$, which implies, $c_{kt}=0$. \\
Now, from equation \eqref{nvariable3} for $i=k,~r=t $ and $q \neq \{t,t\pm 1 \}$, we have 
\begin{eqnarray}
    c_{kq}(c_{(k-1)t}-C_{k}c_{(k+1)t})=0 ~~\Longrightarrow ~c_{kq}=0 ~~~~\text{and}  \notag
\end{eqnarray}
 $$ \rho(x_{k})=c_{k}+c_{k(t-1)}x_{t-1}+c_{k(t+1)}x_{t+1}\in K[x_{t-1},x_{t+1}].$$  \par
Now, by using the coefficient of $x_{t}^{2}$ in equation \eqref{nvariable2}, for  $i=k+1$, we have $c_{(k+1)t}(c_{kt}-C_{k+1}c_{(k+2)t})=0$. This implies $c_{(k+1)t}c_{(k+2)t}=0$. Recursively, from equation \eqref{nvariable2}, we get $c_{it}c_{(i+1)t}=0$ for all $i \in \mathbb{Z}_{n}$. \par
Since $c_{(k-1)t}-C_{k}c_{(k+1)t} \neq 0$, at least one of the $c_{(k-1)t}$ or $c_{(k+1)t}$ is not equal to zero. Without loss of generality, we assume $c_{(k+1)t} \neq 0$. Hence, $c_{(k+2)t}=0$, and from equation \eqref{nvariable3} for $i=k+1,~r=t $ and $q \neq \{t,t\pm 1 \}$, we have
$$c_{(k+1)t}(c_{kq}-C_{k+1}c_{(k+2)q})=0.$$
The above equation gives $c_{(k+2)q}=0$ for all $q \neq \{t,t\pm 1\}$. So, $$\rho(x_{k+2})=c_{k+2}+c_{(k+2)(t-1)}x_{t-1}+c_{(k+2)(t+1)}x_{t+1}\in K[x_{t-1},x_{t+1}].$$ 

\smallskip
\textbf{Claim:} $c_{it}=0$ for all  $i \in \mathbb{Z}_{n}$ and $i \neq k+1$.

We have observed that  $c_{kt}=c_{(k+2)t}=0$. We first show $c_{(k+3)t}=0$. Suppose not, then  $c_{(k+3)t}c_{(k+4)t}=0$, gives $c_{(k+4)t}=0$ and from equation \eqref{nvariable3} for $i=k+3,~r=t $ and $q \neq \{t,t\pm 1 \}$, we get 
$$c_{(k+3)t}(c_{(k+2)q}-C_{k+3}c_{(k+4)q})=0.$$ 
The above equation gives $c_{(k+4)q}=0$ for all $q \neq \{t,t\pm 1\}$, and $$\rho(x_{k+4})=c_{k+4}+c_{(k+4)(t-1)}x_{t-1}+c_{(k+4)(t+1)}x_{t+1}\in K[x_{t-1},x_{t+1}].$$ 
This is not possible as $\rho$ is an automorphism and $\rho(x_{k}),\rho(x_{k+2}) \in K[x_{t-1},x_{t+1}]$. 
Now, we show $c_{(k+4)t} = 0$. If not, then from equation \eqref{nvariable3} for $i=k+3,~r=t $ and $q \neq \{t,t\pm 1 \}$, we have
$$c_{(k+3)q}(c_{(k+2)t}-C_{k+3}c_{(k+4)t})=0 \Longrightarrow c_{(k+3)q}=0 .$$ 
 
Hence, $\rho(x_{k+3})=c_{k+3}+c_{(k+3)(t-1)}x_{t-1}+c_{(k+3)(t+1)}x_{t+1}\in K[x_{t-1},x_{t+1}]$, a contradiction. \par
Proceeding with a similar argument as above, one can show that  $c_{it}=0$ for  $i \neq k+1 $.  \par

\smallskip

Since $\rho(x_{k}-C_{k+1}x_{k+2})=(c_{k}-C_{k+1}c_{k+2})+(c_{k(t-1)}-C_{k+1}c_{(k+2)(t-1)})x_{t-1}+(c_{k(t+1)}-C_{k+1}c_{(k+2)(t+1)})x_{t+1} \notin K$, either $c_{k(t-1)}-C_{k+1}c_{(k+2)(t-1)} \neq 0$ or $c_{k(t+1)}-C_{k+1}c_{(k+2)(t+1)} \neq 0$. \par

\textbf{Case 1.} $c_{k(t+1)}-C_{k+1}c_{(k+2)(t+1)} \neq 0$. By using the coefficient of $x_{t+1}^{2}$ in equation \eqref{nvariable2}, for $i=k+1$, we have $c_{(k+1)(t+1)}=0$. Now, from equation \eqref{nvariable3} for $i=k+1,~r=t+1 $ and $q \neq \{t,t+1,t+2 \}$, we get 
$c_{(k+1)q}=0$ and $$\rho(x_{k+1})=c_{k+1}+c_{(k+1)t}x_{t}+c_{(k+1)(t+2)}x_{t+2}\in K[x_{t},x_{t+2}].$$ 

\textbf{Subcase 1.} $c_{(k+2)(t+1)} \neq 0$. In this case, $c_{(k+2)(t+1)}(c_{(k+1)(t+1)}-C_{k+2}c_{(k+3)(t+1)})= 0$. This implies $c_{(k+3)(t+1)}= 0$. \\
Recursively, we have $c_{i(t+1)}c_{(i+1)(t+1)}= 0$ for all $ i \in \mathbb{Z}_{n}$. 

Further, observe that $c_{(k+1)(t+1)}-C_{k+2}c_{(k+3)(t+1)}= 0$, together with equation \eqref{nvariable3} for $i=k+2,~r=t+1 $ and $q \neq \{t,t+1,t+2 \}$, gives
$$c_{(k+2)(t+1)}(c_{(k+1)q}-C_{k+2}c_{(k+3)q})=0.$$ 
Hence, $c_{(k+3)q}=0$ for all $q \neq \{t,t+1,t+2\}$ as $c_{(k+1)q}=0$ for all $q \neq \{t,t+1,t+2\}$ and  $$\rho(x_{k+3})=c_{k+3}+c_{(k+3)t}x_{t}+c_{(k+3)(t+2)}x_{t+2}\in K[x_{t},x_{t+2}].$$ 
Now, one can show that $c_{i(t+1)}=0$ for all $i \in \mathbb{Z}_{n}$ except $i = k+2$
by using an argument similar to the case of $c_{it}=0$ for  $i \neq k+1$. In particular, for $i=k$, $c_{k(t+1)}=0$ and  $\rho(x_{k})$ is given by $$\rho(x_{k})=c_{k}+c_{k(t-1)}x_{t-1}.$$

Note that the way we have shown $c_{it}=0$ for all $i$ except $i = k+1$ and $c_{i(t+1)}=0$ for all $i$ except $i = k+2$, we also get $c_{(k+2)q}=c_{(k+4)q}=0$ for all $q$ except $q = \{t+1,t+2,t+3\}$, and
$c_{i(t+2)}=0$ for all $i$ except $i = k+3$. Now,
\begin{align}
 & \rho(x_{k+1})=c_{k+1}+c_{(k+1)t}x_{t}, \notag \\
 &\rho(x_{k+2})=c_{k+2}+c_{(k+2)(t+1)}x_{t+1},  \notag \\
 &\rho(x_{k+3})=c_{k+3}+c_{(k+3)(t+2)}x_{t+2},\notag \\
 &\rho(x_{k+4})=c_{k+4}+c_{(k+4)(t+3)}x_{t+3}.\notag
\end{align}

By continuing the above argument, we get $\rho$ of the form $$\rho(x_{k+j})=c_{k+j}+c_{(k+j)(t+j-1)}x_{t+j-1} \hspace{1cm} \forall~~ j \in \mathbb{Z}_{n}.$$\par
\textbf{Subcase 2.} $c_{k(t+1)} \neq 0$. Using a similar argument as in \textbf{Subcase 1}, we can show that $\rho$ will be of the form 
$$\rho(x_{k+j})=c_{k+j}+c_{(k+j)(t-j+1)}x_{t-j+1} 
\hspace{1cm} \forall ~~j \in \mathbb{Z}_{n}.$$\par
\textbf{Case 2.} $c_{k(t-1)}-C_{k+1}c_{(k+2)(t-1)} \neq 0$. Using a similar argument as in \textbf{Case 1},\\ If $c_{k(t-1)}\neq 0$, then
$$\rho(x_{k+j})=c_{k+j}+c_{(k+j)(t+j-1)}x_{t+j-1} \hspace{1cm} \forall~~ j \in \mathbb{Z}_{n}.$$
If $c_{(k+2)(t-1)} \neq 0$, then $$\rho(x_{k+j})=c_{k+j}+c_{(k+j)(t-j+1)}x_{t-j+1} 
\hspace{1cm} \forall ~~j \in \mathbb{Z}_{n}.$$ 

\smallskip
Now, we show that $c_{ij}$ can take only finitely many values and $c_{i}=0$ for all $i$.\newline \par
If $\rho$ is of the form $\rho(x_{k+j})=c_{k+j}+c_{(k+j)(t+j-1)}x_{t+j-1}$, then $d \rho(x_{k+j})= \rho d (x_{k+j})$ gives 
\begin{align}
c_{(k+j)(t+j-1)}x_{t+j-1}(x_{t+j-2}-&C_{t+j-1}x_{t+j})=
   (c_{k+j}+c_{(k+j)(t+j-1)}x_{t+j-1})\big((c_{k+j-1}\notag \\& + c_{(k+j-1)(t+j-2)}x_{t+j-2})-C_{k+j}(c_{k+j+1}+c_{(k+j+1)(t+j)}x_{t+j})\big). \label{case1}    
\end{align}

By comparing the coefficients of $x_{t+j-2}$, $x_{t+j-1}x_{t+j-2}$ and $x_{t+j-1}x_{t+j}$ in equation \eqref{case1}, we have $c_{k+j}=0$, $c_{(k+j-1)(t+j-2)}=1$, and $-C_{t+j-1}=-C_{k+j}c_{(k+j+1)(t+j)}$, respectively.
Since it is true for all $ j \in \mathbb{Z}_{n}$, the automorphism $\rho$ is given by $\rho(x_{k+j})=x_{t+j-1}$ with $C_{k+j}=C_{t+j-1}$. \newline

Else, $\rho$ will be of the form $\rho(x_{k+j})=c_{k+j}+c_{(k+j)(t-j+1)}x_{t-j+1}$
and  $d \rho(x_{k+j})= \rho d (x_{k+j})$ gives 
\begin{align}
    c_{(k+j)(t-j+1)}x_{t-j+1}(x_{t-j}-&C_{t-j+1}x_{t-j+2})=(c_{k+j}+c_{(k+j)(t-j+1)}x_{t-j+1}) \big((c_{k+j-1}\notag \\& + c_{(k+j-1)(t-j+2)}x_{t-j+2})-C_{k+j}(c_{k+j+1}+c_{(k+j+1)(t-j)}x_{t-j})\big).\label{case2}
\end{align} 
By comparing the coefficients in equation \eqref{case2}, we have $c_{k+j}=0$, $-C_{k+j}c_{(k+j+1)(t-j)}=1$, and $c_{(k+j-1)(t-j+2)}=-C_{t-j+1}$. Since it is true for all $ j \in \mathbb{Z}_{n}$, the automorphism $\rho$ is given by $\rho(x_{k+j})=-C_{t-j}x_{t-j+1}$ with $C_{k+j-1}C_{t-j}=1$.

In both cases, $\rho$ can take only finitely many values. Hence, the isotropy group of $d$ is finite.
  \end{proof}

For $k,t \in \mathbb{Z}_{n}$, let $S_{1}$ and $S_{2}$ be
\begin{align}
 & S_{1}= \left \{ \rho ~|~ \rho(x_{k+j})=x_{t+j-1}~ \text{with}~C_{k+j}=C_{t+j-1}~~ \forall ~~j \in \mathbb{Z}_{n} \right \}, \notag\\
 &S_{2}=\left \{ \rho ~| ~\rho(x_{k+j})=-C_{t-j}x_{t-j+1}~ \text{with}~C_{k+j-1}C_{t-j}=1~\forall ~~j \in \mathbb{Z}_{n} \right \}. \notag  
\end{align}

\begin{remark} \label{mainremark}
\begin{enumerate}
\item From Theorem \ref{maintheorem},  
$\left | S_{1} \right | \leq n,~\left | S_{2} \right | \leq n$, and $\text{Aut}(K[X])_{d}=S_{1}\cup S_{2}$.
\item If $\rho \in S_{1}$, and $t=k+1$, then $\rho$ is an identity automorphism; in this case, there are no conditions on $C_{i}$.
\item If $\rho \in S_{1}$, then we can write $\rho$ in the cyclic notation as $(x_{k+j}~~~x_{t+j-1}~~~\ldots ~~~ x_{mt+j-m-(m-1)k} ~~~\ldots)$.
Observe that if there is a cycle of length $m$, then every cycle in $\rho$ is of length $m$. So, if $\rho$ is of order $p$, then $\rho$ is a product of $n/p$ different cycles of length $p$.
\item If $\rho \in S_{2}$, then $\rho ^{2}=id$. 
\end{enumerate}
   \end{remark}

Based on Theorem \ref{maintheorem}, we can now explicitly compute the isotropy group of Lotka-Volterra derivations for particular values of $C_i$'s.  

\begin{corollary} \label{dihedralcorollary}
Let $d$ be the derivation as defined in equation \eqref{5plusV}. If $C_{1}=\ldots=C_{n}$ and $C_{1} \in \{\pm 1\}$, then the isotropy group of $d$ is isomorphic to the dihedral group $D_{2n}$ of order $2n$.
\end{corollary}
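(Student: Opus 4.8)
The plan is to read the entire isotropy group off Theorem \ref{maintheorem} and then recognise its abstract structure. Write $c := C_1 = \cdots = C_n \in \{\pm 1\}$, so that $c^2 = 1$ and hence $C_iC_j = 1$ for all $i,j$. With this, the side-conditions defining $S_1$ and $S_2$ become vacuous: for $S_1$ the requirement $C_{k+j} = C_{t+j-1}$ holds for every choice of $k,t$, and for $S_2$ the requirement $C_{k+j-1}C_{t-j} = 1$ holds likewise. By Remark \ref{mainremark}(1) we already know $\text{Aut}(K[X])_d = S_1 \cup S_2$ with $|S_1| \le n$ and $|S_2| \le n$; so the first step is to observe that under our hypothesis both bounds are attained.

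First I would describe $S_1$ explicitly. Setting $i = k+j$ in $\rho(x_{k+j}) = x_{t+j-1}$ gives $\rho(x_i) = x_{i+s}$ with $s = t-k-1$; as $t$ ranges over $\mathbb{Z}_n$ the shift $s$ ranges over all of $\mathbb{Z}_n$, so $S_1$ consists of exactly the $n$ cyclic shifts and is the cyclic group $\langle \sigma \rangle \cong \mathbb{Z}_n$ generated by $\sigma(x_i) = x_{i+1}$. Similarly, writing $i = k+j$ in $\rho(x_{k+j}) = -C_{t-j}x_{t-j+1} = -c\,x_{t-j+1}$ gives $\rho(x_i) = -c\,x_{m-i}$ with $m = t+k+1$; as $t$ varies, $m$ runs over all of $\mathbb{Z}_n$, so $S_2$ consists of exactly $n$ reflections, each of order two by Remark \ref{mainremark}(4). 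To see that the two families are disjoint I would compare the underlying permutations of the index set $\mathbb{Z}_n$: an element of $S_1$ acts as $i \mapsto i+s$ and an element of $S_2$ as $i \mapsto m-i$, and the identity $i+s \equiv m-i \pmod n$ for all $i$ forces $2 \equiv 0 \pmod n$, which is impossible for $n \ge 5$. Hence $S_1 \cap S_2 = \emptyset$ and $|\text{Aut}(K[X])_d| = 2n$. Note that the sign $-c$ must be retained throughout: when $c = -1$ both families have leading coefficient $+1$, so disjointness genuinely needs the index-permutation argument rather than a count of signs.

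Finally I would verify the dihedral relation. Fixing any $\tau \in S_2$, say $\tau(x_i) = -c\,x_{m-i}$, a direct computation gives $(\tau \circ \sigma)(x_i) = \tau(x_{i+1}) = -c\,x_{m-i-1}$ and $(\sigma^{-1}\circ \tau)(x_i) = \sigma^{-1}(-c\,x_{m-i}) = -c\,x_{m-i-1}$, so $\tau\sigma = \sigma^{-1}\tau$, i.e. $\tau\sigma\tau^{-1} = \sigma^{-1}$. Together with $\sigma^n = \text{id}$ and $\tau^2 = \text{id}$ this is exactly the standard presentation of the dihedral group $D_{2n}$; since $\langle \sigma, \tau\rangle$ already contains the $n$ rotations comprising $S_1$ and the $n$ reflections $\tau\sigma^r$, it exhausts all $2n$ elements of $\text{Aut}(K[X])_d$, giving the claimed isomorphism. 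The only genuinely delicate point is the disjointness and cardinality bookkeeping that pins down $|S_1| = |S_2| = n$; the dihedral relation itself is a one-line verification.
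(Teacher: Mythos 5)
Your proposal is correct and follows essentially the same route as the paper's proof: both read the full isotropy group off Theorem \ref{maintheorem} and Remark \ref{mainremark} (as $S_{1}\cup S_{2}$), take the cyclic shift and one reflection as generators, and verify the relations $\sigma^{n}=\tau^{2}=\mathrm{id}$ and $\tau\sigma\tau^{-1}=\sigma^{-1}$. Your write-up is in fact more careful than the paper's terse argument, since you explicitly pin down $|S_{1}|=|S_{2}|=n$ and prove $S_{1}\cap S_{2}=\emptyset$ via the index-permutation argument (needed because for $C_{1}=-1$ the reflections also have coefficient $+1$), points the paper leaves implicit.
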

\begin{proof}
From Theorem \ref{maintheorem}, for $t=k$, the automorphisms $\rho(x_{k+j})=x_{k+j-1}$ and $\sigma(x_{k+j})= C_{1}x_{k-j+1}$ for all $j \in \mathbb{Z}_{n}$, are in the isotropy group of $d$. Also, one can easily observe that    $\rho^{n}=\sigma^{2}=id$ and $\rho \sigma = \sigma \rho ^{-1}$. Further, if we have an automorphism $\phi \in \text{Aut}(K[X])_{d}$, 
then $\phi= \rho^{m} \sigma$ for a suitable $m$. Hence,
\begin{center}
    $\text{Aut}(K[X])_{d}=\{<\rho, \sigma>~|~\rho^{n}=\sigma^{2}=id~ \text{and}~ \rho \sigma = \sigma \rho ^{-1} \}$.
\end{center}
\end{proof}

\begin{corollary} \label{dihedral}
Let $d$ be the derivation as defined in equation \eqref{5plusV} and $n$ be an even integer. If $C_{2i-1}=C_{2i+1}=\pm1$ and $C_{2i}=C_{2i+2}=\mp1$ for all $i \in \mathbb{Z}_{n}$, respectively, then the isotropy group of $d$ is isomorphic to the dihedral group $D_{n}$ of order $n$.
\end{corollary}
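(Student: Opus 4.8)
The plan is to read the group off directly from the classification already established in Theorem \ref{maintheorem} and Remark \ref{mainremark}. Since the hypotheses give $C_i \in \{\pm 1\} \subseteq K^{*}$, Theorem \ref{maintheorem} applies and $\text{Aut}(K[X])_{d} = S_{1} \cup S_{2}$ with $|S_1|, |S_2| \le n$. Fix the convention $C_i = \epsilon$ for $i$ odd and $C_i = -\epsilon$ for $i$ even, where $\epsilon \in \{\pm 1\}$. The key elementary fact I would record first is that the sequence $(C_i)$ has period $2$ and, since each $C_i \ne 0$ with $\epsilon \ne -\epsilon$, one has $C_a = C_b$ if and only if $a \equiv b \pmod 2$. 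Write $m = n/2$.

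First I would pin down $S_1$. An element $\rho(x_{k+j}) = x_{t+j-1}$ is, after setting $s = t-k-1$, exactly the shift $\rho_s \colon x_i \mapsto x_{i+s}$, and its defining condition $C_{k+j} = C_{t+j-1}$ becomes $C_i = C_{i+s}$ for all $i$; by the parity fact this holds if and only if $s$ is even. Thus $S_1$ consists precisely of the shifts $\rho_s$ with $s$ even. These are $m$ in number, and since $\rho_s \rho_{s'} = \rho_{s+s'}$ they form the cyclic group $\langle \rho_2 \rangle \cong \mathbb{Z}_m$, with $\rho_2$ of order $m$ (as $\rho_2^{m} = \rho_n = \mathrm{id}$).

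Next I would pin down $S_2$. Writing $u = t+k$, an element $\rho(x_{k+j}) = -C_{t-j}x_{t-j+1}$ becomes the reflection $\sigma_u \colon x_i \mapsto -C_{u-i}x_{u-i+1}$, whose defining condition $C_{k+j-1}C_{t-j} = 1$ reads $C_{i-1}C_{u-i} = 1$, that is $C_{i-1} = C_{u-i}$, for all $i$; by the parity fact this holds if and only if $u$ is odd. Hence $S_2 = \{\sigma_u : u \text{ odd}\}$ has exactly $m$ elements, each an involution by Remark \ref{mainremark}(4). Since an index-translation can never coincide with an index-reflection when $n \ge 3$, we have $S_1 \cap S_2 = \emptyset$, so $|\text{Aut}(K[X])_{d}| = |S_1| + |S_2| = 2m = n$.

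Finally I would establish the dihedral relation. Taking any $\sigma = \sigma_u \in S_2$ and computing variable by variable gives $(\sigma \rho_2 \sigma)(x_i) = C_{u-i}C_{i-3}\,x_{i-2}$; because $u$ is odd we have $u-i \equiv i-3 \pmod 2$, so $C_{u-i} = C_{i-3}$ and the scalar collapses to $1$, whence $\sigma \rho_2 \sigma^{-1} = \rho_2^{-1}$. Together with $\rho_2^{m} = \sigma^{2} = \mathrm{id}$, the subgroup $\langle \rho_2, \sigma \rangle$ is a homomorphic image of the abstract dihedral group of order $2m$; as it contains $\langle \rho_2 \rangle$ of order $m$ together with $\sigma \notin \langle \rho_2 \rangle$, it has order exactly $2m = n$ and is therefore isomorphic to $D_n$. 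Comparing cardinalities with the previous paragraph forces $\text{Aut}(K[X])_{d} = \langle \rho_2, \sigma \rangle \cong D_n$. I expect the only real obstacle to be the modular index-and-sign bookkeeping: correctly reducing the two defining conditions to the parity statements ``$s$ even'' and ``$u$ odd,'' and then tracking the $-C_{u-i}$ coefficients through the reflection in the conjugation computation so that they cancel to $1$.
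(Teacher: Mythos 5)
Your proposal is correct and takes essentially the same route as the paper: both read everything off the classification $\text{Aut}(K[X])_{d}=S_{1}\cup S_{2}$ from Theorem \ref{maintheorem} and Remark \ref{mainremark}, exhibit an even shift of order $n/2$ and a reflection, and verify the dihedral relations $\rho^{m}=\sigma^{2}=\mathrm{id}$, $\sigma\rho\sigma^{-1}=\rho^{-1}$. The only difference is one of completeness: where the paper writes that ``it is easy to observe that no other element from $S_{1}$ or $S_{2}$ is in the isotropy group,'' you supply that step explicitly via the parity argument, showing $|S_{1}|=|S_{2}|=n/2$ and concluding by a cardinality comparison.
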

\begin{proof}
 Let $n=2m$.
From Theorem \ref{maintheorem}, for $t=k-1$, the automorphisms 
$\rho(x_{k+j})=x_{k+j-2} $ and $\sigma(x_{k+j})=-C_{k-j-1}x_{k-j} $ for $j \in \mathbb{Z}_{n}$, are in the isotropy group of $d$. Further, $\rho ^{m}=\sigma^{2}=id$, and $ \rho \sigma = \sigma \rho ^{-1}$. Thus, 
\begin{center}
    $D_{n}=\{<\rho, \sigma>~|~\rho^{m}=\sigma^{2}=id~ \text{and}~ \rho \sigma = \sigma \rho ^{-1} \} \subseteq \text{Aut}(K[X])_{d}$.
\end{center}

Further, it is easy to observe that no other element from $S_1$ or $S_2$ is in the isotropy group of $d$.
Hence, the isotropy group of $d$ is isomorphic to $D_{n}$.
\end{proof}

\begin{corollary}
Let $d$ be the derivation as defined in equation \eqref{5plusV}. If, for $1 \leq i,j \leq n$,  $C_{i}=C_{j}$ and $C_{i}C_{j} \neq 1$, then the isotropy group of $d$ is isomorphic to $\mathbb{Z}_{n}$.
\end{corollary}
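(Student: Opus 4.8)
The hypothesis asserts that all the coefficients coincide with a common value, say $C_i = C$ for every $i \in \mathbb{Z}_n$, and that $C^2 = C_iC_j \neq 1$, that is $C \neq \pm 1$. The plan is to reduce everything to the classification already obtained. Assuming first $C \neq 0$, so that $C \in K^*$, Theorem \ref{maintheorem} together with Remark \ref{mainremark}(1) gives $\text{Aut}(K[X])_d = S_1 \cup S_2$ with $|S_1| \le n$ and $|S_2| \le n$. It then remains only to decide which elements of $S_1$ and $S_2$ survive once all $C_i$ are set equal to $C$.

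First I would treat $S_1$. A candidate $\rho(x_{k+j}) = x_{t+j-1}$ lies in $S_1$ precisely when $C_{k+j} = C_{t+j-1}$ holds for all $j$; since every $C_i$ equals the same scalar, this requirement is automatic. Hence for each shift $s = t-k-1 \in \mathbb{Z}_n$ the cyclic map $\rho_s \colon x_i \mapsto x_{i+s}$ belongs to the isotropy group, giving $n$ distinct automorphisms. Because $\rho_s \circ \rho_{s'} = \rho_{s+s'}$, these form a single cyclic group generated by $\rho_1 \colon x_i \mapsto x_{i+1}$; invoking Remark \ref{mainremark}(3) to read off the common cycle length then identifies $S_1 = \langle \rho_1 \rangle \cong \mathbb{Z}_n$.

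Next I would eliminate $S_2$. Any $\rho(x_{k+j}) = -C_{t-j}x_{t-j+1}$ in $S_2$ must satisfy $C_{k+j-1}C_{t-j} = 1$ for all $j$, which under $C_i = C$ is exactly $C^2 = 1$, contradicting $C \neq \pm 1$. Thus $S_2 = \emptyset$, and therefore $\text{Aut}(K[X])_d = S_1 \cong \mathbb{Z}_n$, as required.

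The one delicate point is the boundary value $C = 0$, which the hypothesis $C_iC_j \neq 1$ permits but which falls outside Theorem \ref{maintheorem} (whose proof uses $C_i \in K^*$). For $n \geq 5$, Corollary \ref{reducedtotaldegree} still forces each $\rho(x_i)$ to be linear, so one can repeat the coefficient comparison of Theorem \ref{maintheorem} essentially verbatim with $C = 0$; the reflection branch again collapses, since the relation $C_{k+j-1}C_{t-j} = 1$ cannot hold when its left-hand side vanishes, and only the cyclic shifts remain. I expect this $C = 0$ bookkeeping to be the main obstacle, being the sole subcase where the clean $S_1 \cup S_2$ dichotomy must be re-derived rather than quoted.
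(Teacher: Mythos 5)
Your proposal is correct and takes essentially the same route as the paper: the paper's entire proof is the one-line citation of Theorem~\ref{maintheorem} (via the classification $\text{Aut}(K[X])_{d}=S_{1}\cup S_{2}$), which you have simply made explicit by noting that the condition $C_{k+j}=C_{t+j-1}$ is automatic, so $S_{1}$ consists of the $n$ cyclic shifts, while $C_{k+j-1}C_{t-j}=1$ forces $C^{2}=1$, so $S_{2}$ is empty. Your concern about the subcase $C=0$ is well founded --- the paper's citation of Theorem~\ref{maintheorem} genuinely does not cover it --- but instead of re-deriving the linear classification by hand you could simply invoke Theorem~\ref{maintheorem2}(1), which appears later in the paper and states that when all $C_{i}=0$ the isotropy group is isomorphic to $\mathbb{Z}_{n}$.
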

    \begin{proof}
The result follows directly from Theorem \ref{maintheorem}.
\end{proof}

\noindent Now, we consider the case when at least one of the $C_{i}$'s is zero. We prove that if $C_{i}$'s are zero for all $i$, then the isotropy group of $d$ is isomorphic to a cyclic group of order $n$. Further, if $C_{i}$ is zero for at least one $i$, but not all the $C_{i}$'s are identically zero, then the isotropy group of $d$ is isomorphic to a subgroup of a cyclic group of order $n$.

\begin{theorem}\label{maintheorem2}
Let $d$ be the derivation as defined in equation \eqref{5plusV}. 
\begin{enumerate}
\item  If $C_{i}=0$ for all $i$, then the isotropy group of $d$ is isomorphic to $\mathbb{Z}_{n}$.
\item If $C_i =0$ for at least one $i$ but not for every $i$, then the isotropy group of $d$ is isomorphic to a subgroup of $\mathbb{Z}_{n}$.
\end{enumerate}
\end{theorem}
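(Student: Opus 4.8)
The plan is to handle both parts of the statement uniformly: I would reduce an arbitrary $\rho\in\mathrm{Aut}(K[X])_d$ to a monomial map and then read off the group from the combinatorics of the coefficient sequence $(C_i)$. Since $n\geq 5\neq 4$, Corollary \ref{reducedtotaldegree} already guarantees that every $\rho(x_i)$ is affine, say $\rho(x_i)=c_i+\ell_i$ with $\ell_i=\sum_j c_{ij}x_j$. Splitting the relation $d\rho(x_i)=\rho d(x_i)$ by degree (as in the coefficient comparison used for \eqref{nvariable1}) yields a degree-$0$ equation $c_i(c_{i-1}-C_ic_{i+1})=0$, a degree-$1$ equation $c_i(\ell_{i-1}-C_i\ell_{i+1})+(c_{i-1}-C_ic_{i+1})\ell_i=0$, and a degree-$2$ equation $\ell_i(\ell_{i-1}-C_i\ell_{i+1})=\sum_j c_{ij}x_j(x_{j-1}-C_jx_{j+1})$. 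My first move is to kill the constants: if some $c_i\neq 0$, the degree-$0$ equation forces $c_{i-1}-C_ic_{i+1}=0$, and then the degree-$1$ equation forces $\ell_{i-1}-C_i\ell_{i+1}=0$; together these say $\rho(x_{i-1}-C_ix_{i+1})=0$, contradicting injectivity of $\rho$ since $x_{i-1}-C_ix_{i+1}\neq 0$ for $n\geq 3$. Hence $\rho$ is linear and homogeneous, and with the abbreviation $m_i:=\ell_{i-1}-C_i\ell_{i+1}$ we are reduced to $\ell_im_i=\sum_j c_{ij}x_j(x_{j-1}-C_jx_{j+1})$ for every $i$.

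The crux, and the step I expect to require the most care, is to show each $\ell_i$ is a single monomial. The right-hand side above has no square terms and is supported only on cyclically consecutive pairs $x_jx_{j\pm1}$. Comparing coefficients of $x_a^2$ shows $\mathrm{supp}(\ell_i)$ and $\mathrm{supp}(m_i)$ are disjoint; because of this disjointness no cancellation occurs among the cross terms of the product $\ell_im_i$, so every variable of $\mathrm{supp}(\ell_i)$ must be cyclically adjacent to every variable of $\mathrm{supp}(m_i)$. For $n\geq 5$ this is very restrictive: if $\ell_i$ involved two variables they would have to be $x_{b-1},x_{b+1}$ with $m_i$ a nonzero multiple of $x_b$, so that $\ell_im_i$ contains only $x_{b-1}x_b$ and $x_{b+1}x_b$; but then the right-hand side would also contain the consecutive monomial $x_{b-1}x_{b-2}$ with nonzero coefficient $c_{i,b-1}$, which is absent from $\ell_im_i$ — a contradiction. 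Therefore $|\mathrm{supp}(\ell_i)|=1$, i.e. $\rho(x_i)=\lambda_ix_{\pi(i)}$ with $\lambda_i\in K^*$, and invertibility of $\rho$ makes $\pi$ a permutation of $\mathbb{Z}_n$.

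Next I would feed the monomial form back into $d\rho=\rho d$ and cancel $\lambda_ix_{\pi(i)}$ to get $x_{\pi(i)-1}-C_{\pi(i)}x_{\pi(i)+1}=\lambda_{i-1}x_{\pi(i-1)}-C_i\lambda_{i+1}x_{\pi(i+1)}$. Since $x_{\pi(i-1)}$ always occurs on the right, $\pi(i-1)\in\{\pi(i)-1,\pi(i)+1\}$, so consecutive indices map to cyclically adjacent values; a bijection of $\mathbb{Z}_n$ with this property cannot reverse direction, since a single reversal would identify $\pi(i-1)$ with $\pi(i+1)$. Hence $\pi$ is globally either the shift $\pi(i)=i+s$ or a reflection $\pi(i)=a-i$. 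A reflection matches $x_{\pi(i)-1}$ with $x_{\pi(i+1)}$ and therefore forces $1=-C_i\lambda_{i+1}$, which requires $C_i\neq 0$ for \emph{every} $i$; for the shift, matching both monomials forces $\lambda_i\equiv 1$ together with the period condition $C_{i+s}=C_i$ for all $i$, and conversely each such $s$ gives a genuine element $\rho_s\in\mathrm{Aut}(K[X])_d$ with $\rho_s(x_i)=x_{i+s}$.

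Finally I would assemble the group. In both parts of the statement at least one $C_i$ equals $0$, so the reflection family is empty and $\mathrm{Aut}(K[X])_d=\{\rho_s:\ C_{i+s}=C_i\ \text{for all }i\}$. The assignment $\rho_s\mapsto s$ is a homomorphism into $\mathbb{Z}_n$ because $\rho_s\rho_{s'}=\rho_{s+s'}$, and it is injective because the scalars $\lambda_i$ are forced to be $1$; thus the isotropy group is isomorphic to $H=\{s\in\mathbb{Z}_n:\ C_{i+s}=C_i\ \text{for all }i\}$, the group of periods of $(C_i)$. When every $C_i=0$ the period condition is vacuous, $H=\mathbb{Z}_n$, which is part (1); when some but not all of the $C_i$ vanish, $H$ is a subgroup of $\mathbb{Z}_n$, which is part (2). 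The only genuinely technical point in this program is the monomial reduction of the second paragraph; the remaining steps are short coefficient comparisons.
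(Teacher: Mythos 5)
Your proof is correct, and it takes a genuinely different route from the paper's. The paper proves this theorem by re-running the coefficient-propagation machinery of Theorem \ref{maintheorem}: it picks $t$ with $c_{(k-1)t}-C_k c_{(k+1)t}\neq 0$, chases vanishing coefficients around the cycle to force the two candidate shapes $\rho(x_{k+j})=c_{k+j}+c_{(k+j)(t+j-1)}x_{t+j-1}$ or $\rho(x_{k+j})=c_{k+j}+c_{(k+j)(t-j+1)}x_{t-j+1}$, and only afterwards, case by case, kills the constants and pins down the scalars and the conditions on the $C_i$. You instead kill all constant terms at the outset (the injectivity argument via $\rho(x_{i-1}-C_ix_{i+1})=0$ is clean and does not appear in the paper), obtain monomiality of each $\rho(x_i)$ from the support-disjointness/no-cancellation argument --- this is where $n\geq 5$ enters, paralleling the paper's exclusion of $n=4$, and your contradiction via the missing monomial $x_{b-1}x_{b-2}$, whose right-hand coefficient is $c_{i,b-1}\neq 0$ because $b-2\notin\{b-1,b+1\}$ when $n\geq 5$, is sound --- then classify the induced permutation as a shift or a reflection by the sign argument, and finally read off the group as the period group $H=\{s\in\mathbb{Z}_n : C_{i+s}=C_i \text{ for all } i\}$. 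The endpoint is the same classification (shifts and reflections, with reflections excluded once some $C_i=0$ since they require $-C_i\lambda_{i+1}=1$), but your organization is self-contained, whereas the paper leans on ``a similar argument as shown in Theorem \ref{maintheorem}''; moreover it yields the sharper uniform statement that the isotropy group is exactly the group of periods of the sequence $(C_i)$, which gives part (1) ($H=\mathbb{Z}_n$) and part (2) ($H$ a subgroup of $\mathbb{Z}_n$) simultaneously, and since your first three steps never use the vanishing of any $C_i$, they also reprove the finiteness assertion of Theorem \ref{maintheorem}. Two small points you should make explicit: in the monomiality step you need $m_i\neq 0$ in order to choose the variable $x_b$, and $\ell_i\neq 0$; both follow from injectivity of $\rho$ applied to the nonzero elements $x_{i-1}-C_ix_{i+1}$ and $x_i$, exactly as in your first step.
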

\begin{proof}
Let $\rho \in \text{Aut}(K[X])_{d}$. By Corollary \ref{reducedtotaldegree}, we know that $\rho(x_{i})$'s are polynomials in $K[X]$ of total degree $1$. Let 
$$\rho(x_{i})=c_{i}+\sum_{j=1}^n c_{ij}x_{j},$$
where $c_{i}, c_{ij} \in K$, for all $ i, j \in \mathbb{Z}_{n}$.\newline
 
Let $C_{p}=0$ for some $p \in \mathbb{Z}_{n} $. Then, $d\rho(x_{p})=\rho d(x_{p})$ gives
\begin{equation}\label{nvariable5}
\begin{aligned}
\sum_{j=1}^{p-1} c_{pj}x_{j}(x_{j-1}-&C_{j}x_{j+1})+c_{pp}x_{p}x_{p-1}+\sum_{j=p+1}^{n} c_{pj}x_{j}(x_{j-1}-C_{j}x_{j+1})\\
&=(c_{p}+ \sum_{j=1}^n c_{pj}x_{j}) 
(c_{p-1}+\sum_{j=1}^n c_{(p-1)j}x_{j}).
\end{aligned}
\end{equation}

Also, for $i \neq p$, $d\rho(x_{i})=\rho d(x_{i})$ gives

\begin{equation}\label{nvariable4}
\begin{aligned}
\sum_{j=1}^{p-1} c_{ij}x_{j}(x_{j-1}-&C_{j}x_{j+1})+c_{ip}x_{p}x_{p-1}+\sum_{j=p+1}^{n} c_{ij}x_{j}(x_{j-1}-C_{j}x_{j+1})\\
&=(c_{i}+ \sum_{j=1}^n c_{ij}x_{j}) 
\big((c_{i-1}+\sum_{j=1}^n c_{(i-1)j}x_{j})-C_{i}(c_{i+1}+\sum_{j=1}^n c_{(i+1)j}x_{j})\big).
\end{aligned}
\end{equation}

By comparing the coefficient of $x_{j}^{2}$ in equations \eqref{nvariable5} and \eqref{nvariable4}, we have 
\begin{eqnarray}
    c_{pj}c_{(p-1)j}=0, \label{nvariable6}
\end{eqnarray}
 and for $i \neq p$ 
\begin{eqnarray}
    c_{ij}(c_{(i-1)j}-C_{i}c_{(i+1)j})=0. \label{nvariable7}
\end{eqnarray} 
The equations \eqref{nvariable6} and \eqref{nvariable7}, together gives
\begin{eqnarray}
    c_{ij}c_{(i-1)j}=0 ~\hspace{1cm}~\forall~~i,j. \label{nvariable8}
\end{eqnarray}
Now, for $1 \leq q,r \leq n$ and $q \neq \{r,r\pm 1\}$, by comparing the coefficient of $x_{q}x_{r}$ in equation \eqref{nvariable5}, we have 
\begin{eqnarray}
    c_{pq}c_{(p-1)r}+c_{pr}c_{(p-1)q}=0,\label{nvariable9}
\end{eqnarray}
and by comparing the coefficient of $x_{q}x_{r}$ in equation \eqref{nvariable4}, we have 
\begin{eqnarray}
    c_{iq}(c_{(i-1)r}-C_{i}c_{(i+1)r})+c_{ir}(c_{(i-1)q}-C_{i}c_{(i+1)q})=0 \hspace{1cm} \text{(where}~ i\neq p).\label{nvariable10}
\end{eqnarray}
By putting $i=p-1$ in equation \eqref{nvariable10} and  substituting equation \eqref{nvariable9}, we have
\begin{align}
 & c_{(p-1)q}c_{(p-2)r}+c_{(p-1)r}c_{(p-2)q}-C_{p-1}(c_{pr}c_{(p-1)q}+c_{pq}c_{(p-1)r})=0. \notag\\
  \Longrightarrow ~~~ & c_{(p-1)q}c_{(p-2)r}+c_{(p-1)r}c_{(p-2)q}=0.\notag  
\end{align}
Recursively, we get 
\begin{eqnarray}
    c_{iq}c_{(i-1)r}+c_{ir}c_{(i-1)q}=0 \hspace{1cm} \forall~~ i. \label{nvariable11}
\end{eqnarray}
Now, using a similar argument as shown in Theorem \ref{maintheorem}, we can show that $\rho$ will be of the form
$$\rho(x_{k+j})=c_{k+j}+c_{(k+j)(t+j-1)}x_{t+j-1} \hspace{1cm} \forall~~ j \in \mathbb{Z}_{n}$$ 
or
$$\rho(x_{k+j})=c_{k+j}+c_{(k+j)(t-j+1)}x_{t-j+1} \hspace{1cm} \forall ~~j \in \mathbb{Z}_{n}.$$
Next, we show that $c_{ij}$ can take only finitely many values.\newline \par
\textbf{Case 1.} If $C_{i}=0$ for all $i$. In this case, $d(x_{i})=x_{i}x_{i-1}$.\newline \par
\textbf{Sub-Case 1.} If $\rho$ is of the form $\rho(x_{k+j})=c_{k+j}+c_{(k+j)(t+j-1)}x_{t+j-1}$.
Then $d \rho(x_{k+j})= \rho d (x_{k+j})$ gives 
\begin{eqnarray}
   c_{(k+j)(t+j-1)}x_{t+j-1}x_{t+j-2}=
   (c_{k+j}+c_{(k+j)(t+j-1)}x_{t+j-1})(c_{k+j-1}+c_{(k+j-1)(t+j-2)}x_{t+j-2}).\notag
\end{eqnarray}

By equating various coefficients in the above equation, we have $c_{k+j}=c_{k+j-1}=0$ and $c_{(k+j-2)(t+j-2)}=1 $. So, $\rho$ is of the form $\rho(x_{k+j})=x_{t+j-1}$ for all $j \in \mathbb{Z}_{n}$.\par
\textbf{Sub-Case 2.} If $\rho$ is of the form $\rho(x_{k+j})=c_{k+j}+c_{(k+j)(t-j+1)}x_{t-j+1}$.
Then, $d \rho(x_{k+j})= \rho d (x_{k+j})$ gives
\begin{eqnarray}
   c_{(k+j)(t-j+1)}x_{t-j+1}x_{t-j}=
  (c_{k+j}+c_{(k+j)(t-j+1)}x_{t-j+1})(c_{k+j-1}+c_{(k+j-1)(t-j+2}x_{t-j+2}).\notag
\end{eqnarray}
Clearly, the above equation does not hold, and in this case, we have no automorphism.\par
\smallskip
So, $\rho$ is of the form 
$\rho(x_{k+j})=x_{t+j-1}$ for all $j \in \mathbb{Z}_{n}$. 
In particular, for $t=k$, $\rho(x_{k+j})=x_{k+j-1}$ with $\rho ^{n}=id$. Also, automorphism for any other value of $t$ will be a power of $\rho$. Hence, the isotropy group of $d$ is generated by $\rho$ and is isomorphic to $\mathbb{Z}_{n}$. \newline \par
\textbf{Case 2.} All $C_{i}$'s are not equal to zero.\newline \par
\textbf{Sub-Case 1.} If $\rho$ is of the form $\rho(x_{k+j})=c_{k+j}+c_{(k+j)(t+j-1)}x_{t+j-1}$.
Then, using an argument similar to the proof of  Theorem \ref{maintheorem} (\textbf{Case 1}), we have $\rho(x_{k+j})=x_{t+j-1}$ with $C_{k+j}=C_{t+j-1}$ for all $j \in \mathbb{Z}_{n}$. As, $C_{p}=0$, it enforces either $C_{p}=C_{t+p-k-1}=\ldots=C_{m(t-1)-mk+p}=0$ or $t=k+1$. 
If $t=k+1$, then $\rho$ is the identity automorphism. Further,  the isotropy group is generated by $\rho$, and the order of $\rho$ is a divisor of $n$.  

\textbf{Sub-Case 2.} If $\rho$ is of the form $\rho(x_{k+j})=c_{k+j}+c_{(k+j)(t-j+1)}x_{t-j+1}$, then we observe no automorphism.
\end{proof}

\begin{corollary}
 Let $d$ be the derivation as defined in equation \ref{LVD}. If $n \geq 3$ and $C_{i}=1$ for all $i$, then the isotropy group of $d$ is isomorphic to the dihedral group of order $2n$.   
\end{corollary}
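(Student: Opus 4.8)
The plan is to prove the statement by splitting into cases according to the value of $n$, since each regime has already been treated separately in the preceding sections. The substantive work — establishing finiteness of the isotropy group together with an explicit description of its elements — is already contained in the earlier results, so here it remains only to assemble those results and read off the dihedral structure. I would first note that the hypothesis $C_1 = \cdots = C_n = 1$ places every $C_i$ in $K^*$ and makes all the $C_i$ coincide, which is exactly the input needed by the relevant corollaries.

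For $n = 3$, the hypothesis $C_1 = C_2 = C_3 = 1$ is precisely that of Corollary \ref{C_{i}=1}, which yields that the isotropy group is isomorphic to $D_6 = D_{2\cdot 3}$. For $n = 4$, the hypothesis $C_i = 1$ for all $i \in \mathbb{Z}_4$ is that of Corollary \ref{4C_{i}=1}, giving an isotropy group isomorphic to $D_8 = D_{2\cdot 4}$; this case genuinely requires its own treatment, since the general degree-reduction results (e.g.\ Corollary \ref{reducedtotaldegree}) exclude $n = 4$. For $n \geq 5$, since $C_1 = \cdots = C_n = 1$ and $1 \in \{\pm 1\}$, Corollary \ref{dihedralcorollary} applies directly and gives an isotropy group isomorphic to $D_{2n}$. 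These three cases exhaust all integers $n \geq 3$, so the claim follows.

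For the reader's orientation in the case $n \geq 5$, it is worth recalling why Corollary \ref{dihedralcorollary} produces a dihedral group: by Theorem \ref{maintheorem}, applicable since every $C_i = 1 \in K^*$, each element of the isotropy group lies in $S_1 \cup S_2$. Because all the $C_i$ coincide, the defining congruences $C_{k+j} = C_{t+j-1}$ governing $S_1$ and $C_{k+j-1}C_{t-j} = 1$ governing $S_2$ are satisfied for every admissible choice of $t$; hence $S_1$ supplies the $n$ cyclic shifts and $S_2$ supplies $n$ reflections, accounting for exactly $2n$ automorphisms. Choosing a generating shift $\rho$ and one reflection $\sigma$, one verifies $\rho^n = \sigma^2 = \mathrm{id}$ and $\rho\sigma = \sigma\rho^{-1}$, which are the defining relations of $D_{2n}$.

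Since the statement is an aggregation of already-proved results, there is no genuine obstacle to surmount; the only points requiring care are the index bookkeeping in the presentation $\langle \rho,\sigma \mid \rho^n = \sigma^2 = \mathrm{id},\ \rho\sigma = \sigma\rho^{-1}\rangle$ and the confirmation that no elements of $S_1 \cup S_2$ beyond these $2n$ survive under the condition $C_i = 1$ — both of which are already handled inside Corollary \ref{dihedralcorollary} and its supporting Theorem \ref{maintheorem}.
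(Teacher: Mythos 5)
Your proposal is correct and follows essentially the same route as the paper: the paper's proof likewise just combines Corollary \ref{C_{i}=1} (case $n=3$), Corollary \ref{4C_{i}=1} (case $n=4$), and Corollary \ref{dihedralcorollary} (case $n\geq 5$, applicable since $C_1=\cdots=C_n=1\in\{\pm 1\}$). Your added explanation of how Theorem \ref{maintheorem} and the sets $S_1, S_2$ yield the dihedral structure is accurate but not needed beyond what those corollaries already establish.
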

\begin{proof}
 The proof follows from Corollaries \ref{C_{i}=1}, \ref{4C_{i}=1}, and \ref{dihedralcorollary}.   
\end{proof}

\section*{Acknowledgements}
The first author would like to acknowledge the financial support by Council of Scientific and Industrial Research, India (09/086(1441)/2020-EMR-I).

\bibliographystyle{abbrv}
\bibliography{biblo}

\end{document}